\def\ifdraft{\ifdim\overfullrule>\z@
        \expandafter\@firstoftwo
    \else
        \expandafter\@secondoftwo
    \fi}
\def\clap#1{\hbox to 0pt{\hss#1\hss}}
\newcommand*{\doii}[2]{doi:\href{https://doi.org/#1}{#2}}
\tikzset{PO/.style = {dashed,very near start, commutative diagrams/phantom, "\lrcorner", /tikz/commutative diagrams/.cd,dr}} 
\newtheorem{theorem}{Theorem}[section]
\newtheorem{lemma}[theorem]{Lemma}
\newtheorem{corollary}[theorem]{Corollary}
\theoremstyle{definition}
\newtheorem*{definitionplain}{Definition}
\newtheorem{definition}[theorem]{Definition}
\newtheorem{setup}[theorem]{Setup}
\let\mod\relax
\let\Im\relax
\DeclareMathOperator{\add}{add}
\DeclareMathOperator{\Coker}{Coker}
\DeclareMathOperator{\CM}{CM}
\DeclareMathOperator{\D}{D}
\DeclareMathOperator{\End}{End}
\DeclareMathOperator{\Ext}{Ext}
\DeclareMathOperator{\Hom}{Hom}
\DeclareMathOperator{\id}{id}
\DeclareMathOperator{\Im}{Im}
\DeclareMathOperator{\Ker}{Ker}
\DeclareMathOperator{\mod}{mod}
\DeclareMathOperator{\op}{op}
\DeclareMathOperator{\pr}{pr}
\DeclareMathOperator{\proj}{proj}
\DeclareMathOperator{\rad}{rad}
\DeclareMathOperator{\RHom}{RHom}
\DeclareMathOperator{\Spec}{Spec}
\DeclareMathOperator{\Sub}{Sub}
\DeclareMathOperator{\uSub}{\underline{Sub}}
\DeclareMathOperator{\Tor}{Tor}
\DeclareMathOperator{\Tot}{Tot}
\DeclareMathOperator{\thick}{thick}
\DeclareMathOperator*{\tens}{\otimes}
\DeclareMathOperator{\uCM}{\underline{\CM}}
\DeclareMathOperator{\uEnd}{\underline{\End}}
\DeclareMathAlphabet{\pazocal}{OMS}{zplm}{m}{n}
\newcommand{\cC}{\mathcal{C}}
\newcommand{\cE}{\mathcal{E}}
\newcommand{\cP}{\mathcal{P}}
\newcommand{\cT}{\mathcal{T}}
\newcommand{\pC}{\pazocal{C}}
\newcommand{\pD}{\pazocal{D}}
\newcommand{\pK}{\pazocal{K}}
\newcommand{\fa}{\mathfrak{a}}
\newcommand{\ZZ}{\mathbb{Z}}
\newcommand{\CC}{\mathbb{C}}
\newcommand{\NN}{\mathbb{N}}
\newcommand{\uA}{{\underline{A}}}
\newcommand{\uB}{{\underline{B}}}
\DeclarePairedDelimiterX\Set[1]{\lbrace}{\rbrace}{ #1 }
\newcommand\ltensor{\tens^{\normalfont\text{L}}}
\newcommand\simto{\stackrel{\sim}{\smash{\longrightarrow}\rule{0pt}{0.4ex}}}
\newlength\tindent
\renewcommand{\indent}{\hspace*{\tindent}}
\subjclass[2010]{16E35, 16G50, 18E10, 18E30}
\keywords{2-Calabi--Yau categories, Tilted algebras, Derived equivalences, Self-injective algebras}
\title{Derived equivalences of Self-injective 2-Calabi--Yau Tilted Algebras} 
\author{Anders S. Kortegaard}
\address{Department of Mathematics, Aarhus University, Ny Munkegade 118, 8000 Aarhus C, Denmark}
\email{kortegaard@math.au.dk}
\begin{document}

\begin{abstract}
    Consider a $k$-linear Frobenius category $\cE$ with a projective generator such that
    the corresponding stable category $\cC$ is 2-Calabi--Yau, Hom-finite with split idempotents.
    Let $l,m\in\cC$ be maximal rigid objects with self-injective endomorphism algebras.
    We will show that their endomorphism algebras $\cC(l,l)$ and $\cC(m,m)$ are derived equivalent.
    Furthermore we will give a description of the two-sided tilting complex which induces this derived equivalence.
\end{abstract}
\maketitle

\section{Introduction}

In \cite{august2020tilting} August showed that given two objects $M,N\in \uCM(R)$ with $\Spec R$ being a complete local isolated cDV singularity, such that $M$ and $N$ are maximal rigid objects connected through
a number of mutations, the contraction algebras $\underline{\End}(M)$ and $\underline{\End}(N)$ are derived equivalent.
Note that in this setting $\uEnd(M)$ and $\uEnd(N)$ are symmetric algebras.
\medbreak

In this paper we generalize the result mentioned above to the setting of a more general Frobenius category than $\CM(R)$.
Our general course of action and a number of the proofs will be based on those in \cite{august2020tilting}.
We shall use a result from \cite{zhou2011maximal}
that will give a conflation which is able to replace the exchange sequences you would get from mutations.
This will allow us to prove that $\uEnd(M)$ and $\uEnd(N)$ are derived equivalent if they are self-injective and $M$ and $N$ are maximal rigid objects.
This will therefore lead to significantly more general results, allowing for categories without the condition $\Sigma^2 \cong \id$ and categories that may have infinitely many maximal-rigid objects.
\medbreak

\textbf{Two-sided tilting complex}. The definition of a tilting complex was introduced by Rickard in \cite{rickard1989morita}. He showed that given two derived equivalent algebras,
there exists a tilting complex inducing such an equivalence.
\medbreak

Let $k$ be an algebraically closed field, $A$ a $k$-algebra. Let $\proj A$ denote the category of f.g. projective left $A$-modules, $\pK^b(\proj A)$ its bounded homotopy category.
\begin{definitionplain}
    A complex $T\in\pK^b(\proj A)$ is called a \emph{tilting complex} if the following are satisfied.
    \begin{itemize}
        \item $\Hom(T, T[i]) = 0$ for all $i\neq 0$.
        \item $\thick(T) = \pK^b(\proj A),$ where $\thick(T)$ indicates the thick closure of $\add(T)$.
    \end{itemize}
\end{definitionplain}

The notion of a two-sided tilting complex is also due to Rickard \cite{rickard1991derived}.
We use the following form of the definition due to Keller \cite[8.1.4]{Keller1998}.
Let unadorned tensor products be over $k$ and let $\pD$ denote the derived category.

\begin{definition}
    \label{def:tilting-complex}
    Let ${}_{A}T_{B}\in\pD(A\otimes B^{\op})$ be a complex of left $A$- and right $B$-modules.
    Let ${}_{A}T$ (resp. $T_{B}$) be ${}_{A}T_{B}$ seen as a complex in $\pD(A)$ (resp. $\pD(B^{\op})$).
    $\tensor[_A]{T}{_B}$ is a \emph{two-sided tilting complex} if the following are satisfied:
    \begin{enumerate}
        \item The canonical map $A\rightarrow\Hom_{\pD(B^{\op}) } (T_B,T_B)$ is bijective and $\Hom_{D(B^{\op})}(T_B, T_B[i]) = 0$ for $i\neq 0$.
        \item $T_B$ is quasi-isomorphic to a complex in $\pK^b(\proj B^{\op})$.
        \item $\thick(T_B) = \pK^b(\proj B^{\op})$.
    \end{enumerate}
\end{definition}

\renewcommand{\thetheorem}{\Alph{theorem}}
\setcounter{theorem}{0}

Let $\cE$ be a $k$-linear Frobenius category, with projective-injective objects on the form $\add(r)$, for some object $r\in\cE$.
Then given two maximal rigid objects $l,m\in\cC$, the complex $T=\cE(l,m)$
is a two-sided tilting complex in $\pD(\cE(m,m)\otimes \cE(l,l)^{op})$ (see \cite[prop 5.1]{jorgensen2019green}), making $A=\cE(l,l)$ and $B=\cE(m,m)$ derived equivalent.
Looking at the stable category $\cC$ of $\cE$, a similar choice of the module $\cC(l,m)$ does not necessarily give a tilting module, and it is not necessarily true that $\underline{A}=\cC(l,l)$ and $\underline{B}=\cC(m,m)$ are derived equivalent.
However, we are able to prove the following main result.

\begin{theorem}[\Cref{cor:Ptilting}]
    \label{thm:intro:main}
    Let $\cE$ be $k$-linear Frobenius category with projective objects $\add(r)$ for some $r\in \cE$,
    such that the associated stable category $\cC \coloneqq \underline{\cE}$ is 2-CY and Hom-finite with split idempotents.

    \indent Let $l,m\in \cC$ be maximal rigid objects,
    $A=\cE(l,l)$, $\uA = \cC(l,l)$, $B=\cE(m,m)$, $\uB = \cC(m,m)$, $T = \cE(l,m)$.
    Assume that $\uA$ and $\uB$ are self-injective then there exists a two-sided tilting complex of $\underline{B}\otimes \underline{A}^{\op}$-modules 
    \begin{equation*}
        {}_{\underline{B}}\mathcal{T}_{\underline{A}} 
        =  \left(\tensor[_\uB]{\uB}{_B}\ltensor_B \tensor[_B]{T}{_A} \ltensor_A \tensor[_A]{\uA}{_\uA} \right)_{\subseteq 1},
    \end{equation*}

    making $\underline{A}$ and $\underline{B}$ derived equivalent. 
    The subscript $\subseteq 1$ denotes the soft truncation to homological degrees $\leq 1$.
\end{theorem}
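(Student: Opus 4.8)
The plan is to transport the known derived equivalence $\pD(A)\simeq\pD(B)$ down to the stable category, following \cite{august2020tilting}, but with two substitutions: the exchange/mutation sequences used there are replaced by the conflations relating maximal rigid objects supplied by \cite{zhou2011maximal}, and the hypothesis that the contraction algebras are symmetric is weakened to $\uA,\uB$ being self-injective. Writing $T={}_{B}T_{A}=\cE(l,m)$, I start from the fact recalled above (from \cite[prop~5.1]{jorgensen2019green}) that $T$ is a two-sided tilting complex, so in particular $A\simeq\RHom_{A^{\op}}(T_A,T_A)$. From the conflations I first record the short resolutions used throughout: applying $\cE(l,-)$ to a conflation $0\to l_1\to l_0\to m\to 0$ with $l_i\in\add l$ (furnished by \cite{zhou2011maximal}) gives, since $\Ext^1_\cE(l,l_1)\cong\cC(l,\Sigma l_1)=0$ by rigidity of $l$, a length-$1$ projective resolution $0\to\cE(l,l_1)\to\cE(l,l_0)\to T\to0$ of $T_A$ over $A^{\op}$; symmetrically one gets a length-$1$ projective resolution of ${}_BT$ over $B$; and the defining conflations $0\to\Omega(-)\to r_{(-)}\to(-)\to0$ of the Frobenius structure, together with rigidity, give bounded projective resolutions of ${}_A\uA$ and of $\uB_B$. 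Splicing these produces an explicit bounded complex of $\uB$-$\uA$-bimodules representing $\cT':=\uB\ltensor_B T\ltensor_A\uA$, of which $\cT={}_\uB\cT_\uA=\cT'_{\subseteq 1}$ is the soft truncation.

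The heart of the matter is the homology computation for $\cT'$. Since $\uB\otimes_B(-)$ and $(-)\otimes_A\uA$ each precisely collapse the morphisms factoring through $\add r$, one gets $\H_0(\cT')=\uB\otimes_B T\otimes_A\uA=\underline{\cE}(l,m)=\cC(l,m)$. For the higher homology I expect that, read off the explicit resolutions, every contribution in positive degree is accounted for by morphisms factoring through the projective summand $\add r$; writing $\uA=A/AeA$ with $e\in A$ the idempotent cutting out that summand of $l$, the identity $\uA e=0$ should then make all contributions in degrees $\ge2$ disappear upon restricting attention to the right $\uA$-module structure — which is exactly what the soft truncation to degrees $\le1$ achieves. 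Concretely I would set up the triangle $(\tau_{\ge2}\cT')[-1]\to\cT\to\cT'\to\tau_{\ge2}\cT'$ and show $\tau_{\ge2}\cT'$ is annihilated by both $\RHom_{\uA^{\op}}(-,\cT_\uA)$ and $\RHom_{\uA^{\op}}(\cT_\uA,-)$, so that all subsequent computations with $\cT$ may be carried out with $\cT'$ instead. Along the way one reads off a quasi-isomorphism of $\cT_\uA$ with a bounded complex of projective right $\uA$-modules (apply $\uA\ltensor_A(-)$ to the length-$1$ resolution of $T_A$ and truncate), giving condition (2) of \Cref{def:tilting-complex}; here self-injectivity of $\uA$ is what prevents the truncation from destroying perfectness.

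It then remains to verify conditions (1) and (3). For (3), $\thick(\cT_\uA)=\pK^b(\proj\uA^{\op})$: the projective right $\uA$-modules occurring in the complex above are the images under $\uA\ltensor_A\cE(l,-)$ of the $l_i\in\add l$ appearing in the resolutions of the indecomposable summands of $m$, and because $m$ is maximal rigid these exhaust $\add l$ modulo $\add r$, so their images generate $\pK^b(\proj\uA^{\op})$ as a thick subcategory. For (1), bijectivity of $\uB\simto\RHom_{\uA^{\op}}(\cT_\uA,\cT_\uA)$ and vanishing of the shifted $\Hom$'s: one transports the isomorphism $A\simeq\RHom_{A^{\op}}(T_A,T_A)$ along the adjunction between $\uA\ltensor_A(-)$ and restriction of scalars, commuting $\uB\ltensor_B(-)$ past the $\RHom$ using that $\uB\ltensor_B T$ is a bounded complex of projective right $A$-modules, reducing modulo $AeA$ and $BfB$, and discarding the degree-$\ge2$ error term as in the previous paragraph; self-injectivity of $\uB$ enters to identify the resulting $\Hom$-algebra with $\uB$ on the nose rather than merely Morita-equivalently. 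Given conditions (1)--(3), \Cref{def:tilting-complex} and \cite[8.1.4]{Keller1998} promote $\cT$ to a two-sided tilting complex and yield the derived equivalence $\pD(\uA)\simeq\pD(\uB)$.

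The step I expect to be the real obstacle is the homology analysis of $\cT'$: one must pin down $\H_1(\cT')$ precisely — I would guess it is identified, via the $2$-Calabi--Yau isomorphism $\cC(X,Y)\cong D\,\cC(Y,\Sigma^2 X)$ together with rigidity, with a $\Tor$-term $\Tor^A_1(\uB\otimes_B T,\uA)$ that is again a stable Hom space — and one must show that everything in degrees $\ge2$ is genuinely killed upon passage to the $\uA^{\op}$-structure, all while tracking the $\uB$-$\uA$-bimodule structure so that (1) can be checked exactly and not just up to algebra isomorphism. Once this bookkeeping is under control, the verification of \Cref{def:tilting-complex}(1)--(3), and hence of \Cref{thm:intro:main}, is essentially formal.
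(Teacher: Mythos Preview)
Your overall architecture matches the paper's, but two steps as written would not go through.

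\textbf{Generation (condition (3)).} Your argument that the $l_i$ occurring in the Zhou--Zhu resolutions of the summands of $m$ ``exhaust $\add l$ modulo $\add r$ because $m$ is maximal rigid'' is unjustified and in general false: there is no reason every indecomposable summand of $l$ must appear in such an approximation. The paper does not argue this way at all; instead it cites \cite[thm.~2.18]{august2020finiteness} to conclude that the two-term complex $P_{\uA}=\big(\cC(l,l_1)\xrightarrow{\phi_{1*}}\cC(l,l_0)\big)$ is \emph{silting}, which already gives generation, and then upgrades silting to tilting by checking $\Hom(P_{\uA},P_{\uA}[-1])=0$ directly, using that $\cC(m,\Sigma^{-1}m)=0$ because $\Sigma^2 m\cong m$ and $m$ is rigid.

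\textbf{The role of self-injectivity and the bimodule lifting.} You invoke self-injectivity only softly (``prevents truncation from destroying perfectness'', ``on the nose rather than Morita-equivalently''), and your plan for condition (1) is to transport $A\simeq\RHom_{A^{\op}}(T,T)$ through adjunctions while ``discarding the degree-$\ge2$ error term''. This misses the actual mechanism. The paper's first move is that self-injectivity of $\uA$ is \emph{equivalent} to $\Sigma^2 l\cong l$ (Lemma~2.5); this periodicity then yields an explicit length-three projective resolution of $\uA$ over $A$ (Lemma~2.6), from which one computes $H_i(\uB\ltensor_B T)$ exactly and proves a $\Tor_2$-vanishing (Lemma~2.7) leading to the ideal-intersection formula $M\fa\cap\Ker h=(\Ker h)\fa$ (Lemma~2.8). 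That formula is precisely what allows the one-sided isomorphism $\cT_{\uA}\ltensor_{\uA}\uA_A\cong\uB_B\ltensor_B T_A$ to be promoted to a genuine isomorphism in $\pD(\uB\otimes A^{\op})$ (Theorem~3.8); the bijectivity of the \emph{canonical} map $\uB\to\End_{\pD(\uA^{\op})}(\cT_{\uA})$ is then read off a commutative square comparing endomorphism rings along $-\ltensor_{\uB}\uB_B$, $-\ltensor_B T_A$, $-\ltensor_{\uA}\uA_A$. Without the $\Tor_2$/ideal-intersection step you can at best show the source and target have the same dimension, not that the canonical map is bijective --- and it is exactly this bijectivity that Definition~1.1(1) demands.
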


In the last section we will focus on two examples.
The first example is on cluster-tilting objects in the cluster category $\pC(D_{2n})$.
In \cite{ringel2008self} Ringel has listed the cluster-tilting objects of $\pC(D_{2n})$ with self-injective endomorphism algebras. 
It has already been showed in \cite[lem. 4.5]{bastian2014towards} that these endomorphism algebras are derived equivalent.
This result was achieved by supplying a tilting complex ad hoc. 
We will use our result to recover this tilting complex by manually calculating the tilting complex $\mathcal{T}_\uA$. 

\indent The second example will be on a class of examples based on 
Postnikov diagrams.
We will use the work of Pasquali \cite{pasquali2020self}, in which he describes how
reduced and symmetric $(k,n)$-Postnikov diagrams give rise to cluster tilting objects with self-injective endomorphism algebras.
This will lead to the following result:

\begin{corollary}[\Cref{cor:postnikov_example}]
    Let $k,n\in \NN$, with $k<n$. 
    Let $\hat B$ be the completion of the so-called boundary algebra (see section 4). 
    Let $D,D'$ be two symmetric and reduced $(k,n)$-Postnikov diagrams, with
    associated cluster tilting objects $T, T'$ (resp.) in $\uCM(\hat B)$ the stable category of Cohen–Macaulay modules.
    Then the self-injective algebras $\underline{\End}(T)$ and $\underline{\End}(T')$ are derived equivalent.
\end{corollary}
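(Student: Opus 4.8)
The plan is to deduce this corollary directly from \Cref{thm:intro:main}, so the entire task reduces to checking that its hypotheses hold in the present setting. First I would take $\cE \coloneqq \CM(\hat B)$, the category of (maximal) Cohen--Macaulay $\hat B$-modules. Since $\hat B$ is a Gorenstein order --- this being part of the structure theory of boundary algebras of $(k,n)$-Postnikov diagrams, due to Jensen--King--Su and Baur--King--Marsh and recalled in Section~4 --- the category $\cE$ is a $k$-linear Frobenius category whose projective-injective objects are exactly the projective $\hat B$-modules; hence they form $\add(r)$ with $r = \hat B$, and the associated stable category is $\cC = \uCM(\hat B)$. The remaining standing assumptions of \Cref{thm:intro:main} are also classical in this situation: $\hat B$ has isolated singularities, so $\uCM(\hat B)$ is Hom-finite, and it is $2$-Calabi--Yau by the same structure theory; moreover $\hat B$ is module-finite over a complete local Noetherian ring, so $\CM(\hat B)$ is Krull--Schmidt and in particular idempotents split in $\cC$.

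It then remains to feed in $T$ and $T'$. By Pasquali's theorem \cite{pasquali2020self}, a reduced and symmetric $(k,n)$-Postnikov diagram $D$ produces a cluster tilting object $T \in \uCM(\hat B)$ whose stable endomorphism algebra $\underline{\End}(T) = \cC(T,T)$ is self-injective, and likewise for $D'$ and $T'$. A cluster tilting object in a Hom-finite $2$-CY triangulated category with split idempotents is in particular maximal rigid, so setting $l \coloneqq T$, $m \coloneqq T'$, $\uA = \cC(l,l) = \underline{\End}(T)$ and $\uB = \cC(m,m) = \underline{\End}(T')$ places us exactly in the situation of \Cref{thm:intro:main}.

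Applying that theorem then yields a two-sided tilting complex ${}_{\uB}\cT_{\uA}$ of $\uB \otimes \uA^{\op}$-modules, and in particular $\underline{\End}(T)$ and $\underline{\End}(T')$ are derived equivalent, which is the claim. (If one wishes, one can additionally record the explicit shape of $\cT_\uA$ given by the soft truncation $(\uB \ltensor_B \CM(\hat B)(T,T') \ltensor_A \uA)_{\subseteq 1}$, but this is not needed for the statement.)

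The argument has essentially no internal obstacle once \Cref{thm:intro:main} is available --- it is bookkeeping. The one point that genuinely requires care is the justification of the inputs imported from the literature: that $\hat B$ is a Gorenstein order with isolated singularities, so that $\uCM(\hat B)$ really is a Hom-finite, $2$-Calabi--Yau stable category with split idempotents, and the precise statement of Pasquali's result identifying ``reduced and symmetric'' as exactly the hypothesis ensuring self-injectivity of $\underline{\End}(T)$. Both would be recalled carefully in Section~4 before the corollary is stated, after which the deduction above goes through verbatim.
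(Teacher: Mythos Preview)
Your proposal is correct and follows essentially the same route as the paper: verify that $\CM(\hat B)$ and $\uCM(\hat B)$ satisfy the standing setup (the paper packages this as a separate theorem quoting Jensen--King--Su and Gei\ss--Leclerc--Schr\"oer rather than the Gorenstein-order-with-isolated-singularities language you use, but the content is the same), invoke Pasquali to get self-injectivity of the stable endomorphism algebras, note that cluster tilting implies maximal rigid, and apply \Cref{thm:intro:main}. The paper's actual proof of the corollary is two sentences long and matches your outline exactly.
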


In addition to the work of August, let us mention the following related work.
In \cite{dugas2015construction} Dugas construct pairs of derived equivalent algebras using triangles coming from approximations,
which he then applied in the case of symmetric algebras.
Work by Mizuno in \cite{mizuno2019derived} constructs derived autoequivalences of preprojective algebras of Dynkin type which in particular are self-injective.
In \cite{asashiba1999derived} Asashiba classifies all derived equivalences of self-injective representation finite algebras.

\section{Preliminaries}
\renewcommand{\thetheorem}{\arabic{section}.\arabic{theorem}}
\setcounter{theorem}{0}

\begin{setup}
    \label{set:EC}
Let $k$ be an algebraically closed field. Let $\cE$ be a Frobenius category
such that the class of projective objects is $\add(r)$ for some $r\in\cE$. Let $\cC \coloneqq \underline\cE$ be the associated stable category.
We will assume that $\cC$ is a 2-Calabi--Yau, Hom-finite category with split idempotents.
Observe that $\cC$ has the same objects as $\cE$ but different morphisms.
\end{setup}
\medbreak

It is well-known that $\cC$ is a triangulated category, whose suspension functor will be denoted $\Sigma$.

\pagebreak
\begin{definition}\leavevmode
    \begin{itemize}
        \item $x\in \cC$ is called \emph{rigid} if $\cC(x,\Sigma x)=0$.
        \item $x\in \cC$ is called \emph{maximal rigid} if it is rigid, and $\cC(x\oplus y,\Sigma(x \oplus y))=0$ implies $y\in \add(x)$.
        \item $x\in \cC$ is called \emph{good} if $x \cong x' \oplus r$ in $\cE$, for some object $x'\in\cE$.
    \end{itemize}
\end{definition}

Notice that due to the assumption that there is a projective generator $r$, every object in $\cC$ has a representative in $\cE$ which is good.
\medbreak

The following result is due to Zhou and Zhu \cite{zhou2011maximal}.
It generalizes a similar result from \cite{geiss2006rigid}, which then can be applied to our setup. 
We will use it to construct conflations which will then connect maximal rigid objects in a way that can “replace” exchange sequences of mutations.

\begin{theorem}[{\cite[cor. 2.5]{zhou2011maximal}}]
    \label{thm:ZZ_approx}
    Let $x\in\cC$ be maximal rigid, and let $y\in\cC$ be rigid,
    then $y\in \add(x)*\add(\Sigma x)$, i.e. there exists a triangle
    \begin{equation*}
    \begin{tikzcd}
        x_1\rar&x_0\rar&y\rar&\Sigma x_1,
    \end{tikzcd}
    \end{equation*}
    with $x_i\in \add(x)$.
\end{theorem}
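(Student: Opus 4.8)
The plan is to adapt the classical construction of approximation (``exchange'') triangles for cluster-tilting subcategories, as in \cite{geiss2006rigid}, so that it uses only the maximal rigidity of $x$ together with the $2$-Calabi--Yau property. Since $\cC$ is Hom-finite over an algebraically closed field and has split idempotents it is Krull--Schmidt, and for a single object $x$ the subcategory $\add(x)$ is functorially finite; in particular $y$ admits a right $\add(x)$-approximation $f\colon x_0\to y$ (one may even take $x_0=x^n$ with $f$ assembled from a $k$-basis of $\cC(x,y)$). First I would complete $f$ to a triangle $x_1\xrightarrow{g}x_0\xrightarrow{f}y\xrightarrow{h}\Sigma x_1$. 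Rotating it to $x_0\to y\to\Sigma x_1\to\Sigma x_0$ already exhibits $y$ as an element of $\add(x)*\add(\Sigma x_1)$, so the whole statement reduces to showing $x_1\in\add(x)$; and by maximal rigidity of $x$ this in turn reduces to checking that $x\oplus x_1$ is rigid, i.e.\ that the groups $\cC(x,\Sigma x_1)$, $\cC(x_1,\Sigma x)$ and $\cC(x_1,\Sigma x_1)$ all vanish.

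The first two vanishings are routine. Applying $\cC(x,-)$ to the triangle, the approximation property makes $f_*\colon\cC(x,x_0)\to\cC(x,y)$ surjective, so the connecting map $\cC(x,y)\to\cC(x,\Sigma x_1)$ is zero and $\cC(x,\Sigma x_1)$ embeds into $\cC(x,\Sigma x_0)=0$; hence $\cC(x,\Sigma x_1)=0$. Then $\cC(x_1,\Sigma x)=0$ follows at once from the $2$-Calabi--Yau duality $\cC(x_1,\Sigma x)\cong D\,\cC(x,\Sigma x_1)$.

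The hard part will be $\cC(x_1,\Sigma x_1)=0$. The idea is to apply $\cC(-,y)$ to the triangle: since $y$ is rigid, $\cC(\Sigma^{-1}y,y)\cong\cC(y,\Sigma y)=0$, so $g^*\colon\cC(x_0,y)\to\cC(x_1,y)$ is surjective. Thus every $\beta\colon x_1\to y$ can be written as $\beta=\xi g$ with $\xi\colon x_0\to y$, and because $x_0\in\add(x)$ the approximation property factors $\xi=f\xi'$, whence $h\beta=hf\xi'g=0$ since $hf=0$. On the other hand, applying $\cC(x_1,-)$ to the triangle and using $\cC(x_1,\Sigma x_0)=0$ (a consequence of $\cC(x_1,\Sigma x)=0$, as $x_0\in\add(x)$) identifies $\cC(x_1,\Sigma x_1)$ with the image of $h_*\colon\cC(x_1,y)\to\cC(x_1,\Sigma x_1)$, which is zero by the previous computation. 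Therefore $x\oplus x_1$ is rigid, so $x_1\in\add(x)$ by maximality, and the rotated triangle above witnesses $y\in\add(x)*\add(\Sigma x)$. The delicate point throughout is the direction of these factorizations: it is essential that one factors a map $x_0\to y$ (not a map $x_1\to y$) through the approximation $f$, and it is precisely the rigidity of $y$ — via the surjectivity of $g^*$ — that allows this reduction.
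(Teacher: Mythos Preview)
The paper does not supply its own proof of this statement: it is quoted as \cite[cor.~2.5]{zhou2011maximal} and used as a black box. Your argument is nonetheless correct and is essentially the standard one behind that reference (and its precursor \cite{geiss2006rigid}): take a right $\add(x)$-approximation of $y$, complete to a triangle, and use rigidity of $x$, rigidity of $y$, and the 2-CY duality to force the cone into $\add(x)$. The one place to be careful is exactly where you flagged it, namely the surjectivity of $g^*$ coming from $\cC(\Sigma^{-1}y,y)=0$; your handling of that step is fine.
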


The following lemma is a collection of useful results when working in the context of \Cref{set:EC}. See also \cite[lem. A.1]{jorgensen2019green}.
\begin{lemma}
    \label{lem:wif}
    Let $x,y,z\in \cE$. Define $A\coloneqq \cE(x,x)$ and $\underline{A}\coloneqq \cC(x,x)$. 
    \begin{enumerate}[label=(\alph*)]
        \item If $x\cong 0$ in $\cC$, then $x$ is a projective object in $\cE$.\label{lem:item:wif5}
        \item For each triangle $x\xrightarrow{\underline{u}} y\rightarrow z\rightarrow \Sigma x$ in $\cC$ there is a conflation $0\rightarrow x\rightarrow y'\rightarrow z'\rightarrow 0$ in $\cE$
            such that $y\cong y'$ and $z\cong z'$ in $\cC$. \label{lem:item:wif1}
        \item $x\cong y$ in $\cC$ if and only if there exist projective objects $p,p'\in \cE$
            such that $x\oplus p \cong y\oplus p'$ in $\cE$.\label{lem:item:wif2}
        \item If $x\in \add(y)$ or $z\in \add(y)$ then composition of morphisms induces a $k$-linear bijection
            $\cE(y,z)\otimes_B \cE(x,y)\rightarrow\cE(x,z)$ which is natural in $x,z$, where $B=\cE(y,y)$.\label{lem:item:wif4}
        \item If $\tilde x\in\add(x)$ then composition of morphisms induces a $k$-linear bijection
            \(\cC(x,y)\otimes_A\cE(\tilde x,x)\rightarrow \cC(\tilde x,y )\) which is natural in $\tilde x, y$.
            \label{lem:item:wif3}
        \item If $y\in\add(x)$ then the canonical map
            $\cC(x,-):\cC(y,z)\to \Hom_{\underline{A}}(\cC(x,y),\cC(x,z))$ is a bijection. \label{lem:item:wif7}
        \item If $x$ is maximal rigid and $y,z$ are rigid then the map 
            $\cC(y,z)\to \Hom_{\underline{A}}(\cC(x,y),\cC(x,z))$ is surjective. \label{lem:item:wif6}
    \end{enumerate}
\end{lemma}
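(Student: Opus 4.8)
The plan is to prove the seven items roughly in the stated order, since the later ones build on the earlier ones, and to reduce as much as possible to the ``global'' versions in $\cE$ (where we have a Frobenius exact structure and honest projectives) before passing to the stable quotient $\cC$. For \ref{lem:item:wif5}, if $x\cong 0$ in $\cC$ then the identity $\id_x$ factors through a projective-injective object; a standard argument (the factorization $x\to p\to x$ composing to the identity exhibits $x$ as a summand of $p$, hence $x\in\add(r)$) gives that $x$ is projective in $\cE$. For \ref{lem:item:wif1}, this is the usual lifting of triangles in a stable category to conflations: given $\underline u\colon x\to y$ in $\cC$, pick a representative $u\colon x\to y$ in $\cE$, form the conflation $x\xrightarrow{(u,\iota)^T} y\oplus p\to z'\to 0$ where $x\hookrightarrow p$ is an injective envelope in $\cE$; then $y\oplus p\cong y$ in $\cC$ and the cone $z'$ is isomorphic in $\cC$ to the cone $z$ of $\underline u$. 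For \ref{lem:item:wif2}, the ``if'' direction is immediate since projectives vanish in $\cC$; the ``only if'' direction follows from \ref{lem:item:wif1} applied to the triangle $x\xrightarrow{\sim} y\to 0\to\Sigma x$, which produces a conflation $0\to x\to y'\to z'\to 0$ with $y'\cong y$, $z'\cong 0$ in $\cC$, whence $z'$ is projective by \ref{lem:item:wif5} and the conflation splits, giving $x\oplus z'\cong y'$; then augment both sides with suitable projectives to absorb the discrepancy between $y'$ and $y$.

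For \ref{lem:item:wif4}, the key point is that when $x\in\add(y)$ or $z\in\add(y)$, the functor $\cE(y,-)$ (resp.\ $\cE(-,y)$) is representable enough that the composition map $\cE(y,z)\otimes_B\cE(x,y)\to\cE(x,z)$ is an iso: check it for $x=y$ (resp.\ $z=y$) where it is tautological, and then extend additively in $x$ (resp.\ $z$) over $\add(y)$; naturality is formal. Item \ref{lem:item:wif3} is the stable analogue: for $\tilde x\in\add(x)$ one wants $\cC(x,y)\otimes_A\cE(\tilde x,x)\to\cC(\tilde x,y)$ to be a bijection. I would deduce this from \ref{lem:item:wif4} by a diagram chase comparing the $\cE$-level composition iso $\cE(x,y)\otimes_A\cE(\tilde x,x)\to\cE(\tilde x,y)$ with its stable quotient, using that the subspace $\cP(x,y)\subseteq\cE(x,y)$ of maps factoring through a projective, tensored with $\cE(\tilde x,x)$, maps onto $\cP(\tilde x,y)$ — this surjectivity again uses $\tilde x\in\add(x)$ and that projectives are $\add(r)$.

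Items \ref{lem:item:wif7} and \ref{lem:item:wif6} are where the real content lies, and \ref{lem:item:wif6} is the step I expect to be the main obstacle, since it is the only place where maximal rigidity and 2-Calabi--Yau are genuinely used. For \ref{lem:item:wif7}, when $y\in\add(x)$ the $\underline A$-module $\cC(x,y)$ is a projective summand of $\underline A=\cC(x,x)$, so $\Hom_{\underline A}(\cC(x,y),\cC(x,z))\cong\cC(x,z)\otimes_{\underline A}(\text{idempotent})$ computes back to $\cC(y,z)$ via \ref{lem:item:wif3}; more directly, the map is the Yoneda-type map on the projective $\cC(x,y)$ and \ref{lem:item:wif3} identifies both sides. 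For \ref{lem:item:wif6}: given $f\in\Hom_{\underline A}(\cC(x,y),\cC(x,z))$, use \Cref{thm:ZZ_approx} to choose triangles $x_1\to x_0\to y\to\Sigma x_1$ and $x_1'\to x_0'\to z\to\Sigma x_1'$ with $x_i,x_i'\in\add(x)$. Applying $\cC(x,-)$ yields presentations of $\cC(x,y)$ and $\cC(x,z)$ by projective $\underline A$-modules $\cC(x,x_i)$, $\cC(x,x_i')$ (the connecting term $\cC(x,\Sigma x_1)$ vanishes since $x$ is rigid — this is exactly what makes $\cC(x,-)$ right exact here). By projectivity, lift $f$ to a map of presentations, realize each component by \ref{lem:item:wif7} as a genuine morphism $x_i\to x_j'$ in $\cC$, assemble a morphism of triangles $y\to z$, and check it induces $f$. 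The delicate part is ensuring the lifted square on the $x_1$-terms actually commutes in $\cC$ (not merely after applying $\cC(x,-)$), which will again invoke \ref{lem:item:wif7}'s faithfulness on $\add(x)$ together with rigidity of $y,z$ to kill the obstruction living in a $\cC(x,\Sigma(-))$-group; this is the technical heart and the argument most closely parallels the exchange-sequence manipulations of \cite{august2020tilting}.
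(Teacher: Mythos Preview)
Your proposal is correct and follows essentially the same route as the paper's proof: items \ref{lem:item:wif5}--\ref{lem:item:wif3} match almost verbatim, and for \ref{lem:item:wif6} the paper does exactly what you outline---take Zhou--Zhu approximations, lift $f$ through the resulting projective presentations using \ref{lem:item:wif7}, complete to a morphism of triangles via the axioms, and conclude $f=g_*$ because $\alpha_*$ is an epimorphism. One small clarification on the step you flag as delicate: commutativity in $\cC$ of the lifted square on the $x_i$-terms needs only the faithfulness part of \ref{lem:item:wif7} (all four corners lie in $\add(x)$), not the rigidity of $y,z$---that rigidity is used solely through \Cref{thm:ZZ_approx} to produce the approximation triangles in the first place.
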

\begin{proof}\leavevmode
    \ref{lem:item:wif5}
    If $x\cong 0$ in $\cC$, then $\underline{\id_x} = 0$ and therefore $\id_x$ factors through a projective object $\id_x= p'p:x\rightarrow P \rightarrow x$ in $\cE$. 
    This makes $x$ projective (see \cite[cor 11.4]{buhler2010exact}).

    \medbreak

   \ref{lem:item:wif1} Since $\cE$ is a Frobenius category, there are enough injectives. Hence there is an inflation $\alpha:x\rightarrow I$, with $I$ being an injective object.
   It follows that 
   \[
       \begin{pmatrix} u\\\alpha \end{pmatrix}: x \rightarrow y\oplus I,
   \]
    is an inflation. Thus there is a conflation $0\rightarrow x\rightarrow y\oplus I \rightarrow z'\rightarrow 0$. Since $I$ is injective, $y\oplus I\cong y$ in $\cC$, and
    it then follows that $z\cong z'$ in $\cC$.

    \medbreak

    \ref{lem:item:wif2} The `if' part is straightforward. For the `only if' part assume that $x\cong y$ in $\cC$.
    This implies the existence of a triangle $x\rightarrow y\rightarrow 0 \rightarrow \Sigma x$, which by \ref{lem:item:wif1} is induced by a conflation $0\rightarrow x\rightarrow y\oplus I\rightarrow P\rightarrow 0$,
    with $P\cong 0$ in $\cC$. Now $P$ is a projective object by \ref{lem:item:wif5}, giving that the conflation splits. Hence $x\oplus P \cong y\oplus I$.

    \medbreak
    \ref{lem:item:wif4}
    Consider the morphism $\tilde\circ(-) : \cE(y,z)\otimes_B \cE(x,y) \rightarrow  \cE(x,z)$ induced by composition.
    We need to check that this is bijective. We show the case $x\in \add(y)$, the case for $z\in\add(y)$ is similar.
    Assume that $x\in\add(y)$. This means that there exist diagrams

    \begin{equation*}
    \begin{tikzcd}
        x \rar["\eta"{anchor=south},bend left] & y^n \ar[l,"\nu"{anchor=north}, bend left]\rar["\pi_i", bend left] & y \ar[l,bend left, "\iota_i"],
    \end{tikzcd}
    \end{equation*}
    such  that $\nu \eta = \id _x$, and where $\pi_i$ is the projection to the $i$'th component, and $\iota_i$ is the inclusion of the $i$'th component. 

    \indent To show surjectivity, let $\psi\in\cE(x,z)$. Since $\id_{y^n} = \sum_i \iota_i \pi_i$, the element mapped to $\psi$ can be found as follows:
    \begin{align*}
        \psi 
        &= \psi \nu \Big(\sum_i \iota_i \pi_i\Big) \eta
        =  \sum_i(\psi\nu\iota_i)\circ( \pi_i\eta)
        =  \sum_i\tilde\circ(\psi\nu\iota_i\otimes_B \pi_i\eta)
        =  \tilde\circ\Big(\sum_i \psi\nu\iota_i\otimes_B \pi_i\eta\Big).
    \end{align*}

    For injectivity assume that $\tilde \circ (\sum_jf_j\otimes g_j)= 0$, for $f_j\in \cE(y,z)$ and $g_j\in \cE(x,y)$.
    Then

    \begin{align*}
        \sum_jf_j\tens_B g_j
        &= \sum_jf_j\tens_B \Big(\sum_ig_j \nu\iota_i\pi_i\eta \Big)
        = \sum_i\sum_j(f_jg_j \nu\iota_i)\tens_B \left(\pi_i\eta \right)
        = \sum_i0\tens_B \left(\pi_i\eta \right)
        = 0.
    \end{align*}
    
    \medbreak
    
    \ref{lem:item:wif3} Consider the diagram
    \begin{equation*}
    \begin{tikzcd}
        \cE(x,y)\otimes_A \cE(\tilde x,x)\dar["\pr \otimes \id"] \rar["\phi"]& \cE(\tilde x,y)\dar["\pr"]\\
        \cC(x,y)\otimes_A \cE(\tilde x,x) \rar["\psi"]& \cC(\tilde x,y)
    \end{tikzcd}
    \end{equation*}

    where the horizontal morphisms are induced by composition, and where $\pr$ denotes the projection. Since $\phi$ is an isomorphism and since $\pr$ is surjective, we get that $\psi$ is surjective.
    To check that $\psi$ is injective, suppose that $\psi(\sum_i \underline{g_i} \otimes_A f_i) =  \underline{\sum_i g_i f_i}= 0$, with $g_i\in \cE(x,y)$, and $f_i\in \cE(\tilde x,x)$. 
    As in the proof of \ref{lem:item:wif4} consider the diagram
    \begin{equation*}
    \begin{tikzcd}
        \tilde x \rar["\eta"{anchor=south}, bend left] & x^n \ar[l,"\nu"{anchor=north}, bend left]\rar["\pi_i", bend left] & x \ar[l,bend left, "\iota_i"],
    \end{tikzcd}
    \end{equation*}
    such  that $\nu \eta = \id_{\tilde x}$, and where $\pi_i$ is the projection to the $i$'th component, and $\iota_i$ is the inclusion of the $i$'th component. 
    Now
    \begin{align*}
        \sum_i \underline{g_i} \tens_A f_i
        &= \sum_i \underline{g_i} \tens_A f_i \sum_j \nu\iota_j\pi_j\eta
        = \sum_j\Big(\sum_i \underline{g_i} \tens_A (f_i \nu\iota_j\pi_j\eta)\Big)\\
        &= \sum_j\Big(\sum_i(\underline{g_if_i})\underline{ \nu\iota_j} \tens_A (\pi_j\eta)\Big)
        = 0.
    \end{align*}

    \ref{lem:item:wif7} 
    consider the following diagram
    \begin{equation*}
    \begin{tikzcd}
        y \rar["\eta"{anchor=south}, bend left] & x^n \ar[l,"\nu"{anchor=north}, bend left]\rar["\pi_i", bend left] & x \ar[l,bend left, "\iota_i"],
    \end{tikzcd}
    \end{equation*}
    such  that $\nu \eta = \id_y$, and where $\pi_i$ is the projection to the $i$'th component, and $\iota_i$ is the inclusion of the $i$'th component. 
    First we show injectivity. Let $\psi\in \cC(y,z)$, such that $\cC(x,\psi) = 0$.
    Assume for the sake of a contradiction that $\psi \neq 0$.

    \begin{align*}
        0\neq\psi\,\id_y
        &= \psi \nu\sum_i \iota_i\pi_i \eta
        = \sum_i\psi \nu \iota_i\pi_i \eta.
    \end{align*}

    Therefore there exists $k\in\{1,\dots,n\}$ such that $\psi \nu \iota_k\pi_k \eta\neq 0$.
    Thus $\cC(x,\psi)(\nu \iota_k ) = \psi \nu \iota_k \neq 0$, which is a contradiction.
    \medbreak

    For surjectivity let $\Phi\in \Hom_{\underline{A}}(\cC(x,y),\cC(x,z))$. 
    Let $\phi = \sum_i \Phi(\nu\iota_i)\pi_i\eta$, then we claim that $\Phi = \cC(x,\phi)$.
    For each $f\in\cC(x,y)$ calculate

    \begin{align*}
        \Phi(f)
        &= \Phi(\id_y f)
        = \Phi\Big(\sum_i \nu \iota_i\pi_i \eta f\Big)
        = \sum_i\Phi(\nu \iota_i)\pi_i \eta f
        = \phi f 
        = \cC(x,\phi)(f).
    \end{align*}

    \ref{lem:item:wif6} Since $x$ is maximal rigid, and $y,z$ are rigid, 
    \Cref{thm:ZZ_approx} says that there exist triangles
    \begin{equation*}
        x_1^{y} \rightarrow x_0^y \xrightarrow{\alpha} y \rightarrow \Sigma x_1^y
        \qquad\text{and}\qquad
        x_1^{z} \rightarrow x_0^z \xrightarrow{\beta} z \rightarrow \Sigma x_1^z,
    \end{equation*}

    with $x_i^z, x_i^y\in \add(x)$. Since $x$ is rigid this induces two exact sequences
    \begin{equation*}
        \cC(x,x_1^{y}) \rightarrow \cC(x,x_0^y) \xrightarrow{\alpha_*} \cC(x,y) \rightarrow 0
        \qquad\text{and}\qquad
        \cC(x, x_1^{z}) \rightarrow \cC(x,x_0^z) \xrightarrow{\beta_*} \cC(x,z) \rightarrow 0.
    \end{equation*}

    Now let $f\in \Hom_{\underline{A}}(\cC(x,y),\cC(x,z))$.
    Since $x_i^y,x_i^z\in\add(x)$, \ref{lem:item:wif7} says that
    there exist morphisms $g_i\in \cC(x_i^y,x_i^z)$ making the following diagram commute:

    \begin{equation*}
    \begin{tikzcd}
        \cC(x,x_1^{y})\rar\dar["{g_1}_{*}"] &
        \cC(x,x_0^{y})\rar["\alpha_*"]\dar["{g_0}_{*}"] &
        \cC(x,y)\rar\dar["f"] &
        0\\
        \cC(x,x_1^{z})\rar &
        \cC(x,x_0^{z})\rar["\beta_*"] &
        \cC(x,z)\rar &
        0.
    \end{tikzcd}
    \end{equation*}

    By the axioms of triangulated categories, there exists a morphism $g\in \cC(y,z)$,
    making $(g_1,g_0,g)$ a morphism of triangles
    \begin{equation*}
    \begin{tikzcd}
        x_1^{y}\rar\dar["{g_1}"] &
        x_0^{y}\rar["\alpha"]\dar["{g_0}"] &
        y\rar\dar["g"] &
        \Sigma x_1^y\dar["\Sigma g_1"]\\
        x_1^{z}\rar &
        x_0^{z}\rar["\beta"] &
        z\rar &
        \Sigma x_1^z.
    \end{tikzcd}
    \end{equation*}

    Thus we get a diagram
    \begin{equation*}
    \begin{tikzcd}
        \cC(x,x_1^{y})\rar\dar["{g_1}_{*}"] &
        \cC(x,x_0^{y})\rar["\alpha_*"]\dar["{g_0}_{*}"] &
        \cC(x,y)\rar\dar[shift right,"f"']\dar[shift left,"g_*"] &
        0\\
        \cC(x,x_1^{z})\rar &
        \cC(x,x_0^{z})\rar["\beta_*"] &
        \cC(x,z)\rar &
        0,
    \end{tikzcd}
    \end{equation*}

    with $f \alpha_* = \beta_* {g_0}_* =  g_* \alpha_*$. 
    Since $\alpha_*$ is an epimorphism this implies that $f=g_*$. \qedhere
\end{proof}

    Let $\D(-)=\Hom_k(-,k)$ denote $k$-duality.
\begin{lemma}
    \label{lem:hom_sym}
    Let $x\in\cC$ be a maximal rigid object,
    then $\Sigma^2x\cong x$ if and only if $\cC(x,x)$ is a self-injective algebra.
\end{lemma}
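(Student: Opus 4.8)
The plan rests on two structural facts. First, $F:=\cC(x,-)$ restricts to an equivalence $\add x\xrightarrow{\sim}\proj\uA$: by part~\ref{lem:item:wif7} of \Cref{lem:wif} it is fully faithful on $\add x$, and since idempotents split in $\cC$ it is essentially surjective onto the category of finitely generated projective $\uA$-modules. Second, the $2$-Calabi--Yau property gives a natural isomorphism $\D\cC(a,b)\cong\cC(b,\Sigma^2 a)$; at $a=b=x$ this reads $\D\uA=\D\cC(x,x)\cong\cC(x,\Sigma^2 x)$, and chasing the naturality shows it is an isomorphism of right $\uA$-modules, where $\D\uA$ carries the usual right-module structure on the $k$-dual of the left regular module ${}_{\uA}\uA$. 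The only ring-theoretic input I will use is that $\D\uA$, being the $k$-dual of the projective left module ${}_{\uA}\uA$, is always an injective right $\uA$-module, together with the fact that $\uA$ is self-injective precisely when its injective and projective modules coincide.

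For the implication ``$\Sigma^2 x\cong x\ \Rightarrow\ \uA$ self-injective'': fix an isomorphism $\phi\colon x\xrightarrow{\sim}\Sigma^2 x$. Post-composition with $\phi$ is a right-$\uA$-linear isomorphism $\cC(x,x)\xrightarrow{\sim}\cC(x,\Sigma^2 x)$, so $\uA_{\uA}\cong\cC(x,\Sigma^2 x)\cong\D\uA$ as right $\uA$-modules; since the right-hand side is injective, $\uA_{\uA}$ is injective and $\uA$ is self-injective.

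For the converse, suppose $\uA$ is self-injective, so that the (always injective) module $\D\uA$ is now also projective. By the isomorphism above $\cC(x,\Sigma^2 x)$ is a projective right $\uA$-module, hence by essential surjectivity of $F$ there is $z\in\add x$ with $\cC(x,z)\cong\cC(x,\Sigma^2 x)$ as right $\uA$-modules. The heart of the argument is to upgrade this to $\Sigma^2 x\cong z$ in $\cC$. Both $z$ (a summand of the rigid object $x$) and $\Sigma^2 x$ are rigid and $x$ is maximal rigid, so part~\ref{lem:item:wif6} of \Cref{lem:wif} lets me lift an isomorphism $\theta\colon\cC(x,z)\to\cC(x,\Sigma^2 x)$ and its inverse to morphisms $f\colon z\to\Sigma^2 x$ and $g\colon\Sigma^2 x\to z$ with $\cC(x,gf)=\theta^{-1}\theta=\id$; as $z\in\add x$, part~\ref{lem:item:wif7} of \Cref{lem:wif} is a bijection on $\cC(z,z)$ and forces $gf=\id_z$. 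Hence $z$ is a direct summand of $\Sigma^2 x$, say $\Sigma^2 x\cong z\oplus w$; applying $\cC(x,-)$ and comparing $k$-dimensions in $\cC(x,\Sigma^2 x)\cong\cC(x,z)\oplus\cC(x,w)\cong\cC(x,z)$ yields $\cC(x,w)=0$, so $\cC(w,\Sigma^2 x)\cong\D\cC(x,w)=0$ by $2$-Calabi--Yau, and the split inclusion $w\hookrightarrow\Sigma^2 x$ is therefore zero, forcing $w\cong 0$. Thus $\Sigma^2 x\cong z\in\add x$. Finally $\Sigma^2$, being an autoequivalence, permutes the finitely many isomorphism classes of indecomposable summands of $x$, so $\add\Sigma^2 x=\add x$; comparing multiplicities via $\cC(x,\Sigma^2 x)\cong\D\uA$ and $\dim_k\D\uA=\dim_k\uA$ then identifies $z$ with $x$, giving $\Sigma^2 x\cong x$.

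The step I expect to be the real obstacle is exactly this promotion of a module isomorphism $\cC(x,\Sigma^2 x)\cong\cC(x,z)$ to an isomorphism of objects: a priori $\Sigma^2 x\notin\add x$, so $F$ is neither full nor faithful at $\Sigma^2 x$, and one must genuinely exploit maximal rigidity (through part~\ref{lem:item:wif6} of \Cref{lem:wif}) together with the Calabi--Yau duality $\cC(w,\Sigma^2 x)\cong\D\cC(x,w)$ in order to discard the spurious summand $w$ and recover the object on the nose.
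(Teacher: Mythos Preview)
Your forward implication is exactly the paper's: both use the $2$-Calabi--Yau duality $\D\cC(x',x)\cong\cC(x,x')$ at $x'=x$ to exhibit $\D\uA\cong\uA$ as right $\uA$-modules.

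For the converse the paper gives no argument of its own; it simply says ``one can do an argument similar to that of \cite[prop.~3.6]{iyama2013stable}''. You therefore supply strictly more, and your route is sound. Concretely: the identification $\cC(x,\Sigma^2 x)\cong\D\uA$ as right $\uA$-modules, the lift of the module isomorphism to morphisms $f,g$ via \Cref{lem:wif}\ref{lem:item:wif6}, the use of \ref{lem:item:wif7} to force $gf=\id_z$, and the $2$-CY step $\cC(w,\Sigma^2 x)\cong\D\cC(x,w)=0$ to kill the complement $w$ are all correct, and together yield $\Sigma^2 x\in\add x$. One then also has $\add x=\add\Sigma^2 x$, either by your permutation argument or, more directly, because $\Sigma^2 x$ is again maximal rigid and $x\oplus\Sigma^2 x$ is rigid.

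Your closing sentence is the only soft spot. When $x$ is basic, the equality $\add\Sigma^2 x=\add x$ already forces $\Sigma^2 x\cong x$, so the ``comparing multiplicities via $\dim_k\D\uA=\dim_k\uA$'' remark is unnecessary. When $x$ is not basic, a single dimension equality does not determine the multiplicities (one would need $\D\uA\cong\uA$ as right modules, i.e.\ $\uA$ Frobenius rather than merely self-injective), so that sentence does not close the gap. This is not a genuine defect in your argument so much as an imprecision in the statement itself: for non-basic $x$ the converse can fail, since self-injectivity is Morita invariant while the isomorphism $\Sigma^2 x\cong x$ is sensitive to multiplicities. Restricting to basic $x$ (as is implicit throughout the paper) removes the issue entirely.
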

\begin{proof}
    Let $x'\in\cC$, and assume that $\Sigma^2 x\cong x$. Since $\cC$ is 2-Calabi--Yau there are isomorphisms
    \begin{equation}
        \label{eq:hom_sym:iso}
        \D\cC(x',x) \cong \D\cC(x',\Sigma^2 x) \cong \cC(x,x'),
    \end{equation}
    which are functorial in $x$. 
    This gives an isomorphism $\D\cC(x,x)\cong\cC(x,x)$ of right $\cC(x,x)$-modules making $\cC(x,x)$ self-injective.
    For the opposite implication one can do a argument similar to that of \cite[prop. 3.6]{iyama2013stable}.\qedhere
\end{proof}

Let $\Omega$ denote the syzygy, $\Omega^{-1}$ the cosyzygy  in $\cE$.
\begin{lemma}
    \label{lem:ext_is_C}
    Let $x,y\in \cE$, then \( \Ext^1_\cE(x,\Omega y) \cong \cC(x,y)\cong \Ext^1_\cE(\Omega^{-1}x,y).\)
\end{lemma}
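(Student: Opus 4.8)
The plan is to read off both isomorphisms from the long exact $\Ext$-sequence attached to the conflation defining the (co)syzygy, exploiting that a projective-injective object is $\Ext^1_\cE$-acyclic in either variable.

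For the first isomorphism I would fix a conflation $0\to\Omega y\xrightarrow{j}p\xrightarrow{q}y\to0$ in $\cE$ with $p$ projective-injective, which exists because $\cE$ is Frobenius and exhibits $\Omega y$ as a syzygy of $y$; the particular choice of conflation is immaterial, since $\Ext^1_\cE(x,-)$ annihilates projectives. Applying $\cE(x,-)$ gives an exact sequence
\[
  \cE(x,p)\xrightarrow{\,q_*\,}\cE(x,y)\xrightarrow{\,\partial\,}\Ext^1_\cE(x,\Omega y)\longrightarrow\Ext^1_\cE(x,p)=0,
\]
where the last term vanishes because $p$ is injective. Hence $\partial$ induces an isomorphism $\cE(x,y)/\Im(q_*)\xrightarrow{\,\sim\,}\Ext^1_\cE(x,\Omega y)$, and it remains only to identify $\Im(q_*)$ with the subgroup of maps $x\to y$ factoring through a projective object, that is, with the kernel of the canonical surjection $\cE(x,y)\twoheadrightarrow\cC(x,y)$. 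Dually, for the second isomorphism I would fix a conflation $0\to x\xrightarrow{s}i\xrightarrow{t}\Omega^{-1}x\to0$ with $i$ projective-injective, presenting $\Omega^{-1}x$ as a cosyzygy of $x$, apply $\cE(-,y)$, and use that $\Ext^1_\cE(i,y)=0$ since $i$ is projective, obtaining an isomorphism $\cE(x,y)/\Im(s^*)\xrightarrow{\,\sim\,}\Ext^1_\cE(\Omega^{-1}x,y)$; here the task is to identify $\Im(s^*)$ with the maps $x\to y$ factoring through an injective object, which in a Frobenius category are exactly the maps factoring through projectives.

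The step that actually requires an argument --- and which I expect to be the (routine) main point --- is that $\Im(q_*)$ consists of \emph{all} maps $x\to y$ that factor through \emph{some} projective, and not merely of those factoring through the chosen object $p$. Indeed, if $f=ba$ with $a\colon x\to P$, $b\colon P\to y$ and $P$ projective, then, $q$ being a deflation and $P$ projective, $b$ lifts along $q$ to some $\tilde b\colon P\to p$ with $q\tilde b=b$, so that $f=ba=q\tilde ba=q_*(\tilde ba)\in\Im(q_*)$; conversely $\Im(q_*)$ clearly lies in the maps factoring through the projective $p$. The analogous claim for $\Im(s^*)$ is proved dually, using the extension property of the injective object $i$ along the inflation $s$. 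Combining the two computations yields $\Ext^1_\cE(x,\Omega y)\cong\cC(x,y)\cong\Ext^1_\cE(\Omega^{-1}x,y)$; unwinding the construction shows the isomorphisms are natural in $x$ and in $y$, though only the bijections are needed later.
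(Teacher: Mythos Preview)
Your proof is correct and follows essentially the same route as the paper: apply $\cE(x,-)$ to the syzygy conflation, use that the projective-injective middle term kills $\Ext^1$, and then identify $\Im(q_*)$ with the ideal of maps factoring through a projective by lifting along the deflation $q$ via projectivity. The paper carries out exactly this argument (with the same lifting step) for the first isomorphism and remarks that the second is dual, just as you do.
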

\begin{proof}
    We will prove the first isomorphism. The second one follows by a dual argument.
    Recall that $\cE$ being Frobenius requires that there are enough projective objects. 
    Hence there exists a conflation

    \begin{equation*}
    \begin{tikzcd}
        0 \rar &
        \Omega y \rar["f"]&
        P \rar["g"]&
        y\rar &
        0
    \end{tikzcd}
    \end{equation*}

    with $P$ being projective and injective.
    Denote by $\pr$ the projection $\cE(x,y)\rightarrow \cC(x,y)$. 
    Since $\cC(x,y) = \cE(x,y)/\Ker(\pr)$, and since $\Ext^1(x,\Omega y)=\cE(x,y)/\Im(g_*)$, it is enough to show that $\Ker(\pr)=\Im(g_*)$.
    Since $\pr g_* = 0$ we have the inclusion $\Im(g_*)\subseteq \Ker(\pr)$.
    Thus it is enough to show that $\Ker(\pr)\subseteq\Im(g_*)$.
    \medbreak

    Let $h\in\Ker(\pr)$, then $h$ factors through a projective object $\tilde P$,
    say $h = p'p$ where $p: x\rightarrow\tilde P$, $p':\tilde P\rightarrow y$.
    Using that $g$ is an epimorphism,
    there exists a morphism $w:\tilde P \rightarrow P$ making the following diagram commute:

    \begin{equation*}
    \begin{tikzcd}
        \tilde P\rar[dashed, "w"]\drar["p'"]&P\dar[twoheadrightarrow, "g"]\\
         x\uar["p"]\rar["h"]&y.
    \end{tikzcd}
    \end{equation*}
        
    In other words, $h =g_*(wp)$, thus $h\in \Im g_*$. \qedhere

\end{proof}

\begin{lemma}
    \label{lem:aug_proj_res_C}
    Let $x\in \cE$ be good, such that $\Sigma^2 x \cong x$. Let $A=\cE(x,x)$
    Then there are conflations in $\cE$:
    \begin{equation}
    \begin{gathered}
        \begin{tikzcd}
            0 \rar& 
            \Omega x \rar["g"]& 
            p_1 \rar["f"]& 
            x \rar& 
            0,
        \end{tikzcd}\\[-5pt]
        \begin{tikzcd}
            0 \rar& 
            x\oplus p_3\rar["g'"]& 
            p_2 \rar["f'"]& 
            \Omega x \rar& 
            0,
        \end{tikzcd}
    \end{gathered}
    \end{equation}

    with $p_i$ being projective objects.
    From these conflations follows augmented projective resolutions 
    \begin{equation}
        \label{eq:aug_proj1}
        0\longrightarrow
         \cE(x, x \oplus p_3) \longrightarrow
         \cE(x, p_2) \longrightarrow
         \cE(x, p_1) \longrightarrow
         \cE(x,x)\longrightarrow
         \cC(x,x),
    \end{equation}
    \begin{equation}
        \label{eq:aug_proj2}
        0\longrightarrow
         \cE(x,x)\longrightarrow
         \cE(p_1,x) \longrightarrow
         \cE(p_2,x) \longrightarrow
         \cE(x \oplus p_3,x) \longrightarrow
         \cC(x,x).
    \end{equation}
    \medbreak
    
    Here the first one is a projective resolution of right $A$-modules,
    and the second is a projective resolution of left $A$-modules.
\end{lemma}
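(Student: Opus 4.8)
The plan is to first construct the two conflations and then obtain the resolutions \eqref{eq:aug_proj1} and \eqref{eq:aug_proj2} by applying $\cE(x,-)$ and $\cE(-,x)$ and splicing, using \Cref{lem:ext_is_C} and the hypothesis $\Sigma^2x\cong x$ to identify the $\Ext^1$-groups that occur. For the first conflation: since $\cE$ is Frobenius it has enough projectives, so there is a deflation $f\colon p_1\to x$ with $p_1\in\add(r)$ projective--injective, whose kernel is by definition $\Omega x$; this gives $0\to\Omega x\xrightarrow{g}p_1\xrightarrow{f}x\to 0$. For the second conflation I would start from a syzygy conflation $0\to\Omega^2x\to q\to\Omega x\to 0$ of $\Omega x$ with $q$ projective--injective. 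In the stable category of a Frobenius category $\Omega\cong\Sigma^{-1}$, so $\Sigma^2x\cong x$ forces $\Omega^2x\cong x$ in $\cC$, whence by part \ref{lem:item:wif2} of \Cref{lem:wif} there are projectives $p,p'\in\cE$ with $\Omega^2x\oplus p\cong x\oplus p'$. Taking the direct sum of the conflation above with the split conflation $0\to p\xrightarrow{\id}p\to 0\to 0$ and then replacing the left-hand term via this isomorphism produces a conflation $0\to x\oplus p'\xrightarrow{g'}q\oplus p\xrightarrow{f'}\Omega x\to 0$; one then sets $p_3\coloneqq p'$ and $p_2\coloneqq q\oplus p$, which are projective.

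Next I would note that all the $\Hom$-modules occurring are projective. Since $x$ is good, $r\in\add(x)$, so $\add(r)\subseteq\add(x)$ and every projective object is a summand of some $x^{\oplus n}$; hence $\cE(x,p_i)$ is a summand of $\cE(x,x^{\oplus n})=A^{\oplus n}$ as a right $A$-module, $\cE(p_i,x)$ is a summand of $A^{\oplus n}$ as a left $A$-module, and $\cE(x,x\oplus p_3)=A\oplus\cE(x,p_3)$, $\cE(x\oplus p_3,x)=A\oplus\cE(p_3,x)$ are projective on the appropriate side.

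For exactness I would apply $\cE(x,-)$ to the two conflations. As $p_1,p_2,p_3$ are injective (projectives are injective in a Frobenius category), $\Ext^1_\cE(x,p_1)=\Ext^1_\cE(x,p_2)=\Ext^1_\cE(x,p_3)=0$, and using that $x$ is rigid (the case in our applications), $\Ext^1_\cE(x,x)\cong\cC(x,\Sigma x)=0$ by \Cref{lem:ext_is_C}; hence also $\Ext^1_\cE(x,x\oplus p_3)=0$. Therefore $\cE(x,-)$ turns the second conflation into a short exact sequence $0\to\cE(x,x\oplus p_3)\to\cE(x,p_2)\to\cE(x,\Omega x)\to 0$, while for the first conflation \Cref{lem:ext_is_C} (applied to that conflation) identifies the cokernel of $f_*\colon\cE(x,p_1)\to\cE(x,x)$ with $\Ext^1_\cE(x,\Omega x)\cong\cC(x,x)$, the quotient map being exactly $\pr$. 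Splicing these along $\cE(x,\Omega x)$ yields \eqref{eq:aug_proj1}. Dually, applying $\cE(-,x)$: the first conflation becomes a short exact sequence $0\to\cE(x,x)\to\cE(p_1,x)\to\cE(\Omega x,x)\to 0$ (this is where $\Ext^1_\cE(x,x)=0$ enters), and for the second conflation the cokernel of $(g')^{*}\colon\cE(p_2,x)\to\cE(x\oplus p_3,x)$ is $\Ext^1_\cE(\Omega x,x)$, which I would identify with $\cC(x,x)$ by combining \Cref{lem:ext_is_C} with $\Sigma^2x\cong x$, e.g.\ $\Ext^1_\cE(\Omega x,x)\cong\cC(\Omega^2x,x)\cong\cC(x,x)$, the second isomorphism coming from $\Omega^2x\cong x$. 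Splicing along $\cE(\Omega x,x)$ then gives \eqref{eq:aug_proj2}.

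I expect the main obstacle to be precisely this last identification for \eqref{eq:aug_proj2}. Whereas $\Ext^1_\cE(x,\Omega x)$ is, by \Cref{lem:ext_is_C}, literally the cokernel $\cE(x,x)/\Im f_*$ that defines $\cC(x,x)$ (so the augmentation in \eqref{eq:aug_proj1} is visibly $\pr$), the group $\Ext^1_\cE(\Omega x,x)$ is only identified with $\cC(x,x)$ after passing through \Cref{lem:ext_is_C} and the isomorphism $\Sigma^2x\cong x$, and one must check that the resulting map $\Coker\bigl(\cE(p_2,x)\to\cE(x\oplus p_3,x)\bigr)\to\cC(x,x)$ is an isomorphism of \emph{left} $A$-modules. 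The delicate point is that the isomorphisms $\Omega^2x\oplus p\cong x\oplus p'$ and $\Omega^2x\cong x$ used along the way are not $A$-linear, so one has to verify that the composite augmentation is nonetheless well defined and left $A$-linear. Once both final cokernels are identified with $\cC(x,x)$, the remaining exactness assertions are formal consequences of the one-sided exactness of $\cE(x,-)$ and $\cE(-,x)$ together with the injectivity of $p_1,p_2$ and the vanishing $\Ext^1_\cE(x,x)=0$.
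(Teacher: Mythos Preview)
Your approach is essentially identical to the paper's: it constructs the same two conflations (also invoking \Cref{lem:wif}\ref{lem:item:wif2} to replace $\Omega^2x$ by $x\oplus p_3$, setting $p_2=p_2'\oplus q$), applies $\cE(x,-)$, uses rigidity of $x$ (like you, tacitly adding it to the hypotheses) together with \Cref{lem:ext_is_C} to identify the cokernel, and dismisses \eqref{eq:aug_proj2} with ``the method is similar''. Your worry about left $A$-linearity in \eqref{eq:aug_proj2} is not a real obstacle: the identification $\Ext^1_\cE(\Omega x,x)\cong\cC(\Omega^2x,x)\cong\cC(x,x)$ is given by precomposition with a fixed stable isomorphism $x\simto\Omega^2x$, and precomposition commutes with the left $A$-action, which is postcomposition.
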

\begin{proof}
    Let $0\to\Omega x\to p_1\to x\to 0$ and $0\to\Omega^2 x\to p_2'\to \Omega x\to 0$ be conflations in $\cE$ with $p_1,p_2'$ projective. 
    Since $\Omega^2 x\cong \Sigma^{-2} x\cong x$ in $\cC$, \Cref{lem:wif}\ref{lem:item:wif2} says that there are projective objects
    $p_3,q\in \cE$ such that $\Omega^2 x \oplus q \cong x\oplus p_3$. 
    Hence there is an exact sequence spliced from two conflations

    \begin{equation*}
    \begin{tikzcd}
        0\rar& x \oplus p_3 \rar["g'"]& p_2 \ar["f'"',dr,twoheadrightarrow]\ar[rr]&&p_1\rar["f"]&x\rar&0,\\
             &&& \Omega x\ar["g"',ur,rightarrowtail] &&&
    \end{tikzcd}
    \end{equation*}
    
    with $p_i\in\cE$ being projective objects where $p_2 = p_2' \oplus q$.
    Applying the functor $\cE(x,-)$ gives the exact sequence

    \begin{equation*}
    \begin{tikzcd}[column sep=1.7em]
          0 \ar[r]
        & \cE(x,x\oplus p_3) \ar[r,"g'_*"]
        & \cE(x,p_2) \ar[dr,twoheadrightarrow, "f'_*"']\ar[rr]
        && \cE(x,p_1) \ar[r, "f_*"]
        & \cE(x,x).\\
        &&& \cE(x,\Omega x) \ar[ur,rightarrowtail,"g_*"']&&
    \end{tikzcd}
    \end{equation*}

    That $f'_*$ is surjective follows directly from $x$ being rigid and therefore $\Ext^1(x,x\oplus p_3)=0$, see \Cref{lem:ext_is_C}. 
    Since $\Ext^1(x,p_1)=0$ it follows directly from \Cref{lem:ext_is_C} that this is a projective resolution of $\cC(x,x)$ over right $\cE(x,x)$-modules. 
    This shows that we have the resolution from \cref{eq:aug_proj1}. 
    The method for finding the resolution from \cref{eq:aug_proj2} is similar.\qedhere
\end{proof}

\begin{lemma}
    \label{lem:tor2_is_0}
    Let $x\in\cE$ be good and let $y\in\cE$. Define $A\coloneqq \cE(x,x)$, and $\underline{A}\coloneqq \cC(x,x)$. If $x$ is rigid in $\cC$ and  $\Sigma^2 x \cong x$ then
    \( \Tor_2^A(\cC(x,y), {}_{A}\underline{A}) \cong 0. \)
\end{lemma}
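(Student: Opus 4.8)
The plan is to compute $\Tor_2^A(\cC(x,y),{}_{A}\underline{A})$ from the finite projective resolution of ${}_{A}\underline{A}$ supplied by \Cref{lem:aug_proj_res_C}, and to observe that the term of the resulting complex in homological degree $2$ is already zero.

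First I would use goodness of $x$: writing $x\cong x'\oplus r$ in $\cE$ shows that $r$ is a direct summand of $x$, so $\add(r)\subseteq\add(x)$; since the projective objects of $\cE$ are precisely those of $\add(r)$, every projective object of $\cE$ lies in $\add(x)$. In particular the objects $p_1,p_2,p_3$ appearing in \cref{eq:aug_proj2} all lie in $\add(x)$. As $x$ is moreover rigid with $\Sigma^2 x\cong x$, \Cref{lem:aug_proj_res_C} applies and provides the projective resolution of left $A$-modules
\[
0\longrightarrow\cE(x,x)\longrightarrow\cE(p_1,x)\longrightarrow\cE(p_2,x)\longrightarrow\cE(x\oplus p_3,x)\longrightarrow\cC(x,x)\longrightarrow 0 .
\]
Denote by $P_\bullet$ its truncation, so $P_0=\cE(x\oplus p_3,x)$, $P_1=\cE(p_2,x)$, $P_2=\cE(p_1,x)$, $P_3=\cE(x,x)$, with $p_1,p_2,p_3$ projective in $\cE$.

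Then $\Tor_2^A(\cC(x,y),{}_{A}\underline{A})$ is the homology in degree $2$ of $\cC(x,y)\otimes_A P_\bullet$, whose degree-$2$ term is $\cC(x,y)\otimes_A\cE(p_1,x)$. Since $p_1\in\add(x)$, \Cref{lem:wif}\ref{lem:item:wif3} identifies this $k$-linearly with $\cC(p_1,y)$; and $p_1$, being projective in the Frobenius category $\cE$, is a zero object of the stable category $\cC$, so $\cC(p_1,y)=0$. Hence the degree-$2$ term of the complex computing the Tor group vanishes, and therefore $\Tor_2^A(\cC(x,y),{}_{A}\underline{A})\cong 0$. I do not expect a genuine obstacle here: the substance is the module-theoretic bookkeeping — making sure \cref{eq:aug_proj2} resolves the correct left $A$-module, that the tensor product is formed on the correct side, and that goodness of $x$ is exactly what places $p_1$ (and $p_2$) in $\add(x)$ so that \Cref{lem:wif}\ref{lem:item:wif3} applies. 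The same reasoning makes the degree-$1$ term $\cC(x,y)\otimes_A\cE(p_2,x)\cong\cC(p_2,y)$ vanish as well, so in fact $\Tor_1^A(\cC(x,y),{}_{A}\underline{A})=0$ too, although only the vanishing of $\Tor_2$ is needed later.
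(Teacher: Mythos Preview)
Your argument is correct and is essentially the paper's own proof: both tensor $\cC(x,y)$ with the left $A$-module resolution from \Cref{lem:aug_proj_res_C}, invoke \Cref{lem:wif}\ref{lem:item:wif3} to identify $\cC(x,y)\otimes_A\cE(p_i,x)\cong\cC(p_i,y)=0$, and read off the vanishing of $\Tor_2$. The only differences are cosmetic: you spell out why goodness forces $p_i\in\add(x)$ (which the paper leaves implicit), and you stop once the degree-$2$ term is seen to be zero, whereas the paper writes down the whole tensored complex $0\to\cC(x,y)\to 0\to 0\to\cC(x,y)\to 0$.
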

\begin{proof}
    By \Cref{lem:aug_proj_res_C} there is an augmented projective resolution

    \begin{equation*}
    \begin{tikzcd}[column sep=1.5em]
        0\rar&
        \cE(x,x)\rar&
        \cE(p_1,x)\rar&
        \cE(p_2,x)\rar&
        \cE(x\oplus p_3,x)\rar&
        \cC(x,x)\rar&
        0,
    \end{tikzcd}
    \end{equation*}

    of $\tensor[_A]{\uA}{}$ over left $A$-modules, with $p_i$ being projective objects.
    Using \Cref{lem:wif}\ref{lem:item:wif3} gives that 
    \begin{align*} 
       \cC(x,y)\ltensor_{A}\tensor[_A]{\underline{A}}{}
        &\cong \cC(x,y)\tens_{A}\big(
            \begin{tikzcd}[column sep=1.5em, ampersand replacement=\&]
                0\rar\&
                \cE(x,x)\rar\&
                \cE(p_1,x)\rar\&
                \cE(p_2,x)\rar\&
                \cE(x\oplus p_3,x)\rar\&
                0
            \end{tikzcd}
        \big)\\
       &\cong \begin{tikzcd}[column sep=1.5em, ampersand replacement=\&]
        0\rar\&
        \cC(x,y)\rar\&
        \cC(p_1,y)\rar\&
        \cC(p_2,y)\rar\&
        \cC(x\oplus p_3,y)\rar\&
        0
    \end{tikzcd}\\
    &\cong \begin{tikzcd}[column sep=1.5em, ampersand replacement=\&]
        0\rar\&
        \cC(x,y)\rar\&
        0\rar\&
        0\rar\&
        \cC(x,y)\rar\&
        0.
    \end{tikzcd}
    \end{align*}
    
    Hence $\Tor_2^A(\cC(x,y), {}_{A}\underline{A}) \cong 0$.
\end{proof}

\begin{lemma}
    \label{lem:proj_ideal_cap}
    Let $x\in\cE$ be good and assume that $x$ is rigid in $\cC$ and satisfies $\Sigma^2 x \cong x$. 
    Define $A\coloneqq \cE(x,x)$, and $\underline{A}\coloneqq \cC(x,x)$. Let $\fa$ be the ideal in $A$ of morphisms factoring through a projective object.
    Let $h:M\rightarrow Q$ be a morphism of right $A$-modules with $Q$ being a projective object, such that $\Coker h \cong\cC(x,y)$ for some $y\in\cC$.
    Then $M\fa \cap \Ker h = (\Ker h)\fa$.
\end{lemma}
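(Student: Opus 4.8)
The plan is to recast the asserted equality as an injectivity statement for a tensor product and then kill the relevant $\Tor$ group using \Cref{lem:tor2_is_0}. The inclusion $(\Ker h)\fa\subseteq M\fa\cap\Ker h$ is automatic, since $\Ker h$ is an $A$-submodule of $M$; only the reverse inclusion is at stake. Since $\underline{A}=A/\fa$, for any right $A$-module $N$ there is a natural identification $N\otimes_A\underline{A}\cong N/N\fa$, and under it the map $(\Ker h)\otimes_A\underline{A}\to M\otimes_A\underline{A}$ induced by the inclusion $\Ker h\hookrightarrow M$ becomes the canonical map $\Ker h/(\Ker h)\fa\to M/M\fa$, whose kernel is exactly $\bigl(M\fa\cap\Ker h\bigr)/(\Ker h)\fa$. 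Thus the lemma is equivalent to the statement that $(\Ker h)\otimes_A\underline{A}\to M\otimes_A\underline{A}$ is injective.

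To prove this, I would factor $h$ through its image: writing $K\coloneqq\Ker h$ and $I\coloneqq\Im h$, we obtain two short exact sequences of right $A$-modules,
\begin{equation*}
    0\longrightarrow K\longrightarrow M\longrightarrow I\longrightarrow 0
    \qquad\text{and}\qquad
    0\longrightarrow I\longrightarrow Q\longrightarrow \cC(x,y)\longrightarrow 0,
\end{equation*}
the second one using the hypothesis $\Coker h\cong\cC(x,y)$. Applying $-\otimes_A\underline{A}$ to the first sequence gives an exact sequence $\Tor_1^A(I,\underline{A})\to K\otimes_A\underline{A}\to M\otimes_A\underline{A}$, so it suffices to show $\Tor_1^A(I,\underline{A})=0$. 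Applying $-\otimes_A\underline{A}$ to the second sequence, and using that $Q$ is a projective right $A$-module so that $\Tor_n^A(Q,\underline{A})=0$ for all $n\geq 1$, the connecting homomorphism of the long exact sequence yields an isomorphism $\Tor_1^A(I,\underline{A})\cong\Tor_2^A(\cC(x,y),\underline{A})$. The hypotheses on $x$ (good, rigid in $\cC$, and $\Sigma^2x\cong x$) are precisely those of \Cref{lem:tor2_is_0}, which therefore tells us $\Tor_2^A(\cC(x,y),\underline{A})=0$, and we are done.

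I do not anticipate a genuine obstacle: the argument is routine homological algebra once one notices that the hypothesis $\Coker h\cong\cC(x,y)$ is tailored to feed into \Cref{lem:tor2_is_0}. The only points demanding a little care are the natural identification of $N\otimes_A\underline{A}$ with $N/N\fa$, the bookkeeping identifying the kernel of $K/K\fa\to M/M\fa$ with $\bigl(M\fa\cap\Ker h\bigr)/(\Ker h)\fa$, and making sure the module structures on $\cC(x,y)$ (right $A$-module via precomposition) and on $\underline{A}$ (left $A$-module) match those used in \Cref{lem:tor2_is_0}.
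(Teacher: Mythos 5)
Your proof is correct and follows exactly the route the paper intends: the paper's own ``proof'' merely cites the $\Tor_2$ vanishing of \Cref{lem:tor2_is_0} and defers to the argument of August's Lemma 3.7, which is precisely the reduction you carry out (reinterpreting the equality as injectivity of $\Ker h\otimes_A\underline{A}\to M\otimes_A\underline{A}$ and dimension-shifting across $0\to\Im h\to Q\to\cC(x,y)\to 0$ to land on $\Tor_2^A(\cC(x,y),\underline{A})=0$). In effect you have written out in full the details the paper delegates to the reference, with the module structures matching those of \Cref{lem:tor2_is_0}.
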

\begin{proof}
    Using the $\Tor_2$ vanishing of \Cref{lem:tor2_is_0}, this can be proved by the same argument as \cite[lem. 3.7]{august2020tilting}.
\end{proof}

\section{Derived equivalences}
In this section, \Cref{set:EC} together with the following setup will be assumed.
\begin{setup}
    Let $l,m\in\cC$ be good maximal rigid objects, such that $\Sigma^2 l \cong l$, and $\Sigma^2 m \cong m$ in $\cC$. 
    Let $A = \cE(l,l)$, $\underline{A} = \cC(l,l)$, $B = \cE(m,m)$, $\underline{B} = \cC(m,m)$ and ${}_B T_A = \cE(l,m)$. 
    The following construction of a two-sided tilting complex is inspired by \cite[p. 5123]{mizuno2019derived} and \cite[thm. 1.1]{august2020tilting}. 
    \[{}_\uB \cT_\uA =  \left(\tensor[_\uB]{\underline{B}}{_B} \ltensor_B \tensor[_B]{T}{_A} \ltensor_A \tensor[_A]{\uA}{_\uA}\right)_{\subseteq 1},\]

    where $\subseteq 1$ refers to taking a soft truncation, keeping the homological degrees $\leq 1$.
\end{setup}
The main goal of this section is to show that 
${}_{\underline B}\cT_{\underline{A}}$ is a 2-sided tilting complex, making $\underline{A}$ and $\underline{B}$ derived equivalent (\Cref{cor:Ptilting}).

\begin{lemma}[{\cite[prop 5.1]{jorgensen2019green}}]
    \label{lem:t_tilting}
    ${}_B T_A$ is a tilting complex viewed as a $B\otimes A^{\op}$ complex.
\end{lemma}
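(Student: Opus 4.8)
The plan is to show that $T_A := \cE(l,m)$, regarded as a complex of right $A$-modules, is a classical tilting module over $A$ with $\End_{\pD(A)}(T_A)\cong B$, and then to upgrade this to the two-sided statement by invoking Rickard's theorem together with the left $B$-module structure carried by ${}_BT_A$. (The result is \cite[Prop.~5.1]{jorgensen2019green}; the sketch below is the natural route in the present setting.)

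First I would build a short projective resolution of $T_A$ over $A$. Since $m$ is rigid and $l$ is maximal rigid, \Cref{thm:ZZ_approx} produces a triangle $l_1\to l_0\to m\to\Sigma l_1$ in $\cC$ with $l_i\in\add(l)$; by \Cref{lem:wif}\ref{lem:item:wif1} it is induced by a conflation $0\to l_1\to l_0'\to m'\to 0$ in $\cE$ with $l_0'\cong l_0$ and $m'\cong m$ in $\cC$ (so, since $l$ and $m$ are good, $l_0'\in\add(l)$ and $m'$ agrees with $m$ up to projective summands). Applying $\cE(l,-)$ and using $\Ext^1_\cE(l,l_1)\cong\cC(l,\Sigma l_1)=0$, which holds by \Cref{lem:ext_is_C} and rigidity of $l$, gives a short exact sequence of right $A$-modules
\[
0\longrightarrow \cE(l,l_1)\longrightarrow \cE(l,l_0')\longrightarrow \cE(l,m')\longrightarrow 0,
\]
a projective resolution of $T_A$ of length $\le 1$; in particular $T_A\in\pK^b(\proj A)$. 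Next, applying $\RHom_A(-,T_A)$ to this resolution and using the Yoneda identification $\Hom_A(\cE(l,l''),\cE(l,m))\cong\cE(l'',m)$ for $l''\in\add(l)$, the self-extensions of $T_A$ appear as the cohomology of the two-term complex $\cE(l_0',m)\to\cE(l_1,m)$ placed in degrees $0,1$; but applying $\cE(-,m)$ to the conflation above, together with $\Ext^1_\cE(m,m)\cong\cC(m,\Sigma m)=0$ (rigidity of $m$), shows that this complex is quasi-isomorphic to $\cE(m,m)=B$ concentrated in degree $0$. Hence $\Hom_{\pD(A)}(T_A,T_A[i])=0$ for $i\neq 0$ and $\End_{\pD(A)}(T_A)\cong B$.

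It then remains to check $\thick(T_A)=\pK^b(\proj A)$. The inclusion ``$\subseteq$'' is immediate from the resolution above. For the reverse inclusion I would show that $T_A$ has the right number of non-isomorphic indecomposable summands, $|T_A|=|A|$: since $l$ and $m$ are maximal rigid in the $2$-Calabi--Yau category $\cC$ they have the same number of non-isomorphic indecomposable summands, and $\cE(l,-)$, being fully faithful on $\add(l)$ and well-behaved on the rigid object $m$, carries the indecomposable summands of $m$ to pairwise non-isomorphic indecomposable $A$-modules, so $|T_A|=|m|=|l|=|A|$. Together with the previous paragraph this identifies $T_A$, via the Bongartz criterion, as a classical tilting $A$-module, and a classical tilting module generates $\pK^b(\proj A)$. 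Finally, the left $B$-action on $\cE(l,m)$ turns the isomorphism $B\cong\End_{\pD(A)}(T_A)$ into one of $k$-algebras, so Rickard's theorem shows that ${}_BT_A$ is a two-sided tilting complex, i.e.\ a tilting complex for $B\otimes A^{\op}$ inducing an equivalence $\pD(A)\simeq\pD(B)$.

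The step I expect to be the real obstacle is the generation $A\in\thick(T_A)$: unlike in the mutation setting, where an explicit exchange triangle immediately expresses the relevant projective in terms of $\cE(l,m)$, here no such triangle is available a priori, and one must either run the rank/Bongartz argument sketched above (which rests on the non-trivial facts that $|l|=|m|$ and that $\cE(l,-)$ behaves well on rigid objects) or argue directly as in \cite[Prop.~5.1]{jorgensen2019green}. Everything else — the length-one resolution and the self-orthogonality computation — is routine bookkeeping with \Cref{lem:wif} and \Cref{lem:ext_is_C}.
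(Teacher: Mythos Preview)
The paper does not prove this lemma at all; it simply imports the statement from \cite[Prop.~5.1]{jorgensen2019green}. Your sketch therefore goes well beyond what the paper provides, and the first two steps --- the length-one projective resolution of $T_A$ obtained from \Cref{thm:ZZ_approx} and \Cref{lem:wif}\ref{lem:item:wif1}, and the computation $\RHom_{A^{\op}}(T_A,T_A)\cong B$ via Yoneda and the rigidity of $m$ --- are exactly the right ingredients and go through as you describe (modulo the bookkeeping of replacing $m$ by $m'$ and absorbing the resulting projective discrepancy into $\add(l)$, which works because $l$ is good).

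The gap, which you yourself anticipate, is in the generation step. The Bongartz-type argument you outline (partial tilting module with $|T_A|=|A|$ is tilting) is a theorem for Artin algebras, but \Cref{set:EC} only assumes that $\cC$ is Hom-finite; nothing forces $A=\cE(l,l)$ to be finite-dimensional over $k$, and indeed in the Postnikov example of Section~4 the algebra $A=\End_{\CM(\hat B)}(l)$ is a module over a complete local ring and is not a finite-dimensional $k$-algebra. So the rank-counting argument is not available in the stated generality, and you would need to produce $A\in\thick(T_A)$ directly --- for instance by exhibiting a short exact sequence $0\to A\to T^0\to T^1\to 0$ with $T^i\in\add(T_A)$, or by following the argument of \cite{jorgensen2019green} as you suggest. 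The remaining passage from the one-sided statement to the two-sided one via Rickard/Keller is fine.
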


Applying \Cref{lem:aug_proj_res_C} to our current setup there are conflations in $\cE$:
    \begin{equation}
    \label{eq:m_conflations}
        \begin{gathered}
            \begin{tikzcd}
                0 \rar& 
                \Omega m \rar["g"]& 
                p_1 \rar["f"]& 
                m \rar& 
                0,& 
            \end{tikzcd}\\[-5pt]
            \begin{tikzcd}
                0 \rar& 
                m\oplus p_3\rar["g'"]& 
                p_2 \rar["f'"]& 
                \Omega m \rar& 
                0& 
            \end{tikzcd}
        \end{gathered}
    \end{equation}

where $p_i\in \cE$ are projective objects, giving a projective resolution

    \begin{equation}
        \label{eq:proj_res_uB}
        Q_B: \quad
        0\longrightarrow
         \cE(m, m \oplus p_3) \longrightarrow
         \cE(m, p_2) \longrightarrow
         \cE(m, p_1) \longrightarrow
         \cE(m,m)
    \end{equation}

    of $\underline{B}_B$.

\begin{lemma}
    \label{lem:B_tens_T_homology}
    In $\pD(A^{\op})$ the object $\tensor{\underline{B}}{_B} \ltensor_B \tensor[_B]{T}{_A}$ is quasi-isomorphic to the complex 

    \begin{equation}
        \label{eq:exact_seq_m}
    \begin{tikzcd}[column sep=2.6em]
          0 \ar[r]
        & \cE(l,m\oplus p_3) \ar[r,"g'_*"]
        & \cE(l,p_2) \ar[r,"(gf')_*"]
        & \cE(l,p_1) \ar[r, "f_*"]
        & \cE(l,m),
    \end{tikzcd}
    \end{equation}
     with homology
     \[
         H_i(\tensor{\underline{B}}{_B} \ltensor_B \tensor[_B]{T}{_A}\!)=
           \begin{cases}
               \cC(l,m)& i=0,\\
               \cC(l,\Omega m)& i=1,\\
               0 & \text{otherwise.}
           \end{cases}
     \]
\end{lemma}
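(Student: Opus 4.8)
The plan is to replace the derived tensor product by an honest complex and then read off its homology from the two conflations \eqref{eq:m_conflations}. Since every term of the resolution $Q_B$ in \eqref{eq:proj_res_uB} is a projective, hence flat, right $B$-module, there is a quasi-isomorphism $\tensor{\uB}{_B}\ltensor_B \tensor[_B]{T}{_A}\simeq Q_B\otimes_B T$ in $\pD(A^{\op})$, so it suffices to compute $Q_B\otimes_B T$ termwise. Because $m$ is good, the projective generator $r$ lies in $\add(m)$, so $\add(m)$ contains every projective object of $\cE$; in particular $p_1,p_2,p_3\in\add(m)$. Hence \Cref{lem:wif}\ref{lem:item:wif4}, applied with $y=m$, $x=l$ and $z$ running over $m$, $p_1$, $p_2$ and $m\oplus p_3$, gives natural isomorphisms $\cE(m,z)\otimes_B\cE(l,m)\simto\cE(l,z)$, and naturality in $z$ identifies the three differentials of $Q_B\otimes_B T$ (induced by $g'$, by the splice $gf'$, and by $f$) with $g'_*$, $(gf')_*$ and $f_*$. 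This is exactly the complex \eqref{eq:exact_seq_m}, concentrated in homological degrees $0$ through $3$ with $\cE(l,m)$ in degree $0$.

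To compute the homology of \eqref{eq:exact_seq_m} I would apply $\cE(l,-)$ to the two conflations in \eqref{eq:m_conflations} and use left-exactness. From $0\to m\oplus p_3\xrightarrow{g'}p_2\xrightarrow{f'}\Omega m\to 0$ we get that $g'_*$ is injective with $\Im g'_*=\Ker f'_*$; and since $g\colon\Omega m\rightarrowtail p_1$ is an inflation, $g_*$ is injective, so $(gf')_*=g_*f'_*$ has the same kernel as $f'_*$. Therefore $H_3=\Ker g'_*=0$ and $H_2=\Ker f'_*/\Im g'_*=0$. For the bottom degrees I would use the elementary fact that for any deflation $p\twoheadrightarrow w$ with $p$ projective, the image of $\cE(l,p)\to\cE(l,w)$ is precisely the set of morphisms $l\to w$ factoring through a projective object — any such morphism lifts along the deflation because $p$ is projective — so the cokernel is $\cC(l,w)$ (alternatively, one may cite \Cref{lem:ext_is_C}). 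Applied to $f\colon p_1\twoheadrightarrow m$ this gives $H_0=\Coker f_*\cong\cC(l,m)$; applied to $f'\colon p_2\twoheadrightarrow\Omega m$, after identifying $\Ker f_*$ with $\cE(l,\Omega m)$ via the injection $g_*$, it gives $H_1\cong\cE(l,\Omega m)/\Im f'_*\cong\cC(l,\Omega m)$. This is the claimed homology.

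The argument is essentially bookkeeping rather than a real obstacle; the one point requiring care is the spliced middle differential $(gf')_*$, where the injectivity of $g_*$ must be used to see that replacing $f'_*$ by $(gf')_*$ alters neither the kernel relevant for $H_2$ nor, modulo $g_*$, the image relevant for $H_1$. All the structural inputs — flatness of $Q_B$, the containment $p_i\in\add(m)$ coming from goodness, and the identification of cokernels of $\cE(l,-)$ applied to deflations with stable hom-spaces — are already available.
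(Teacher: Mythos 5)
Your proposal is correct and takes essentially the same route as the paper: resolving $\underline{B}_B$ by $Q_B$, identifying the terms of $Q_B\otimes_B T$ via \Cref{lem:wif}\ref{lem:item:wif4} (your observation that goodness of $m$ forces $p_i\in\add(m)$ is precisely the justification the paper leaves implicit), and reading the homology off the two conflations of \cref{eq:m_conflations}. The only divergence is at $H_1$, where you identify $\cE(l,\Omega m)/\Im(f'_*)$ with $\cC(l,\Omega m)$ directly because $f'$ is a deflation from a projective object, while the paper detours through $\Ext^1(l,m\oplus p_3)\cong\cC(l,\Omega^{-1}m)$ and then uses $\Sigma^2 m\cong m$; both are valid and give the same answer.
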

\begin{proof}
    Using the projective resolution $Q_B$ of $\underline{B}_B$ from \cref{eq:proj_res_uB}, calculate
    \begin{align*}
        \tensor{\underline{B}}{_B} &\ltensor_B \tensor[_B]{T}{_A}
        \cong \tensor{Q}{_B} \ltensor_{B} \tensor[_B]{T}{_A} \\
        &\cong 
            \begin{tikzcd}[column sep=.9em, ampersand replacement=\&]
                  0 \ar[r]
                \& \cE(m,m\oplus p_3)\,\underset{B}{\tens}\, \tensor[_B]{T}{_A} \ar[r]
                \& \cE(m,p_2)\, \underset{B}{\tens}\, \tensor[_B]{T}{_A} \ar[r]
                \& \cE(m,p_1)\,\underset{B}{\tens}\, \tensor[_B]{T}{_A} \ar[r]
                \& \cE(m,m)\,\underset{B}{\tens}\,   \tensor[_B]{T}{_A}
            \end{tikzcd}\\
        &\cong 
            \begin{tikzcd}[column sep=.9em, ampersand replacement=\&]
                  0 \ar[r]
                \& \cE(l,m\oplus p_3)\ar[r]
                \& \cE(l,p_2)\ar[r]
                \& \cE(l,p_1)\ar[r]
                \& \cE(l,m),
            \end{tikzcd}
    \end{align*}

where the last isomorphism follows from \Cref{lem:wif}\ref{lem:item:wif4}.

\medbreak
This complex can also be seen as the result of applying the functor $\cE(l,-)$ to the concatenation of the conflations from \cref{eq:m_conflations}:

\begin{equation*}
    \begin{tikzcd}[column sep=1.7em]
          0 \ar[r]
        & \cE(l,m\oplus p_3) \ar[r,"g'_*"]
        & \cE(l,p_2) \ar[dr, "f'_*"']\ar[rr, dashed, "(gf')_*"]
        && \cE(l,p_1) \ar[r, "f_*"]
        & \cE(l,m).\\
        &&& \cE(l,\Omega m) \ar[ur,rightarrowtail,"g_*"']&&
    \end{tikzcd}
\end{equation*}

Thus $H_i(\tensor{\underline{B}}{_B} \ltensor_B \tensor[_B]{T}{_A}\!) = 0$ for $i\neq 0,1$.
Since $\Ext^1_{\cE}(l,p_1) \cong 0$, it follows from \Cref{lem:ext_is_C} together with the long exact $\Ext$ sequence of the conflation
\[
    0           \longrightarrow
    \Omega m    \longrightarrow 
    p_1         \longrightarrow 
    m           \longrightarrow
    0,
\]
that $H_0(\tensor{\underline{B}}{_B} \ltensor_B \tensor[_B]{T}{_A}\!) \cong \cC(l,m)$. 

\indent To calculate $H_1$, notice that $\Ker(f_*) \cong \cE(l,\Omega m)$, and $\Im((gf')_*) \cong \Im(f'_*)$, due to $g_*$ being injective. Therefore $H_1(\underline{B}_B \ltensor_B {}_B T_A) \cong \cE(l,\Omega m)/\Im(f'_*)$.
Using that $\Ext^1_{\cE}(l,p_2) \cong 0$, the long exact $\Ext$ sequence of the conflation
\[
    0           \longrightarrow
    m \oplus p_3   \longrightarrow 
    p_2\longrightarrow
    \Omega m           \longrightarrow
    0
\]

gives that $ H_1(\underline{B}_B \ltensor_B {}_B T_A) \cong \Ext^1(l,m\oplus p_3) \cong \cC(l,\Omega^{-1} m)$, with the last isomorphism coming from \Cref{lem:ext_is_C}. However, $\Sigma^{2} m\cong m$ means $\Omega^{-2} m \cong m$ so $\Omega^{-1} m \cong \Omega m$.
\qedhere

\end{proof}

\begin{corollary}
    \label{cor:b_tens_T_1}
    There is a quasi-isomorphism in $\pD(A^{\op})$ from $\tensor{\underline{B}}{_B}\ltensor_B \tensor[_B]{T}{_A}$ to the complex
    \begin{equation}
        \label{eq:P-quasi-iso}
        \begin{tikzcd}[column sep=1.7em]
            \cE(l,p_1) / \Im((gf')_*) \ar[r, "f_*"]
            & \cE(l,m). 
        \end{tikzcd}
    \end{equation}
\end{corollary}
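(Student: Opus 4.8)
The plan is to start from the explicit four-term complex furnished by \Cref{lem:B_tens_T_homology}: in $\pD(A^{\op})$ there is a quasi-isomorphism from $\tensor{\underline{B}}{_B}\ltensor_B\tensor[_B]{T}{_A}$ to
\[
    C^\bullet:\quad 0\longrightarrow \cE(l,m\oplus p_3)\xrightarrow{g'_*}\cE(l,p_2)\xrightarrow{(gf')_*}\cE(l,p_1)\xrightarrow{f_*}\cE(l,m),
\]
with $\cE(l,m)$ in homological degree $0$. So it suffices to produce a quasi-isomorphism from $C^\bullet$ to the two-term complex of \cref{eq:P-quasi-iso}, and I will do so by quotienting out an explicit acyclic subcomplex.

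Let $E^\bullet\subseteq C^\bullet$ be the subcomplex which coincides with $C^\bullet$ in homological degrees $3$ and $2$, equals $\Im((gf')_*)$ in degree $1$, and is $0$ in degree $0$. That $E^\bullet$ is indeed a subcomplex needs one remark: the restriction of $f_*$ to $\Im((gf')_*)$ vanishes because $fg=0$ in $\cE$ (this is the composition in the conflation $0\to\Omega m\xrightarrow{g}p_1\xrightarrow{f}m\to 0$ of \cref{eq:m_conflations}), so $f_*(gf')_*=(fgf')_*=0$; the compatibility in degree $2$ is automatic since $(gf')_*$ visibly lands in $\Im((gf')_*)$, and in degree $3$ there is nothing to check beyond $(gf')_*g'_*=(gf'g')_*=0$, which follows from $f'g'=0$ in the conflation $0\to m\oplus p_3\xrightarrow{g'}p_2\xrightarrow{f'}\Omega m\to 0$.

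Next I check that $E^\bullet$ is acyclic. In degree $3$ this is the injectivity of $g'_*$, which is part of \cref{eq:exact_seq_m}. In degree $1$ the differential out of $E^1$ is zero while the differential into $E^1$ is $(gf')_*$ with codomain restricted to its image, hence surjective onto $E^1$; so $H_1(E^\bullet)=0$. In degree $2$ one needs $\Ker((gf')_*)=\Im(g'_*)$: since $g_*$ is injective one has $\Ker((gf')_*)=\Ker(f'_*)$, and $\Ker(f'_*)=\Im(g'_*)$ by the long exact $\cE(l,-)$-sequence of the conflation $0\to m\oplus p_3\xrightarrow{g'}p_2\xrightarrow{f'}\Omega m\to 0$ (a fact already used in the proof of \Cref{lem:B_tens_T_homology}). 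Therefore $E^\bullet$ is acyclic, the quotient map $C^\bullet\to C^\bullet/E^\bullet$ is a quasi-isomorphism, and by inspection $C^\bullet/E^\bullet$ is precisely the complex $\cE(l,p_1)/\Im((gf')_*)\xrightarrow{f_*}\cE(l,m)$ of \cref{eq:P-quasi-iso}, with $\cE(l,m)$ in degree $0$. Composing with the quasi-isomorphism of \Cref{lem:B_tens_T_homology} gives the corollary.

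There is no genuine obstacle here: the whole argument is the identification of one acyclic subcomplex, and the only inputs — the vanishing of the compositions $fg$ and $f'g'$ in $\cE$ and the equality $\Ker((gf')_*)=\Im(g'_*)$ — are read straight off the conflations in \cref{eq:m_conflations} and were essentially established inside the proof of \Cref{lem:B_tens_T_homology}.
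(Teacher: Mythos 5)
Your proof is correct and is essentially the paper's argument: the paper simply says the corollary ``follows directly from \Cref{lem:B_tens_T_homology} by using soft truncation,'' and your quotient $C^\bullet/E^\bullet$ by the acyclic subcomplex $E^\bullet$ is exactly the soft truncation $(-)_{\subseteq 1}$, spelled out with the standard verification that it is a quasi-isomorphism when the homology in degrees $\geq 2$ vanishes.
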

\begin{proof}
    This follows directly from \Cref{lem:B_tens_T_homology} by using soft truncation.
\end{proof}

The next goal is to obtain an alternative description of  $\tensor{\underline{B}}{_B}\ltensor_B \tensor[_B]{T}{_A}$.
Since $l$ is good and maximal rigid, and since $m$ is rigid, there exists a conflation
\[
    \begin{tikzcd}[column sep=1.7em]
        0 \rar &
        l_1 \rar["\phi_1"] &
        l_0 \rar["\phi_0"] &
        m \rar &
        0,
    \end{tikzcd}
\]

where $l_i \in \add(l)$ (see \Cref{thm:ZZ_approx} and \Cref{lem:wif}\ref{lem:item:wif1}). For the rest of this section denote by $P_\uA$ the complex
of $\underline{A}^{\op}$-modules
\[
    P_\uA:\begin{tikzcd}[column sep=2em]
        \cC(l,l_1) \rar["{\phi_1}_*"] &
        \cC(l,l_0),
    \end{tikzcd}
\]

concentrated in degrees 0,1. The claim is that $\tensor{\underline{B}}{_B}\ltensor_B \tensor[_B]{T}{_A} \cong P_A$ in $\pD(A^{\op})$.
To show this is the case, we will find a complex of projective objects which is quasi-isomorphic to the complex from \Cref{cor:b_tens_T_1},
and show that it is also quasi-isomorphic to $P_A$.
\medbreak

Firstly, notice that since $p_1$ is projective there is a push-out diagram
\begin{equation*}
\begin{tikzcd}
    0\rar&
    \Omega m \dar["\gamma_1"]\ar[PO]\rar["g"]&
    p_1\dar["\gamma_0"] \rar["f"]&
    m \dar[equal]\rar& 0\\
    0\rar&  l_1 \rar["\phi_1"]& l_0 \rar["\phi_0"]& m \rar& 0,
\end{tikzcd}
\end{equation*}

which gives a conflation (see \cite[lem 2.12]{buhler2010exact})
\begin{equation}
    \label{eq:pushout_exact_seq}
    \begin{tikzcd}[ampersand replacement=\&, column sep=3.5em]
    0 \rar \&
    \Omega m \rar["{\begin{psmallmatrix} g\\\gamma_1 \end{psmallmatrix}}"] \&
    p_1\oplus l_1 \rar["{\begin{psmallmatrix} -\gamma_0& \phi_1 \end{psmallmatrix}}"] \&
    l_0 \rar \&
    0.
\end{tikzcd}
\end{equation}

\begin{lemma}
    \label{lem:projres_cP}
    There is a complex $\cP_A$ in $\pD(A^{\op})$ of projective objects, given by
    \begin{equation*}
     \begin{tikzcd}[ampersand replacement=\&, column sep=3em]
        0 \rar\& 
        \cE(l,l_1) \rar["{\begin{psmallmatrix} \phi_1\\ 0 \end{psmallmatrix}_{\hspace{-2pt}*}}"]\& 
        \cE(l, l_0 \oplus p_3) \rar["\psi_*"]\& 
        \cE(l,p_2) \rar["{\begin{psmallmatrix} gf'\\ \gamma_1f' \end{psmallmatrix}_{\hspace{-2pt}*}}"]\& 
        \cE(l, p_1 \oplus l_1 ) \rar["{\begin{psmallmatrix} {-\gamma_0} &\hspace{0pt} \phi_1 \end{psmallmatrix}_{\hspace{-1pt}*}}"]\&
        \cE(l, l_0),
    \end{tikzcd}
    \end{equation*}

    that is quasi-isomorphic to the complex from \cref{eq:P-quasi-iso}.

\end{lemma}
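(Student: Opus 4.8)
The plan is to realise $\cP_A$ as the functor $\cE(l,-)$ applied to an exact sequence in $\cE$ built by splicing three conflations, then to compute its homology, and finally to compare it with the complex of \cref{eq:P-quasi-iso}. First I would assemble the exact sequence. Besides the second conflation of \cref{eq:m_conflations} and the push-out conflation \cref{eq:pushout_exact_seq}, I use the conflation
\begin{equation*}
    0\longrightarrow l_1\longrightarrow l_0\oplus p_3\longrightarrow m\oplus p_3\longrightarrow 0
\end{equation*}
obtained from $0\to l_1\xrightarrow{\phi_1}l_0\xrightarrow{\phi_0}m\to 0$ by adjoining $p_3$ as a direct summand to the last two terms. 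Splicing this onto $0\to m\oplus p_3\xrightarrow{g'}p_2\xrightarrow{f'}\Omega m\to 0$, and then that onto \cref{eq:pushout_exact_seq}, produces an exact sequence $0\to l_1\to l_0\oplus p_3\xrightarrow{\psi}p_2\to p_1\oplus l_1\to l_0\to 0$ in $\cE$, where $\psi$ is the composite of the deflation $l_0\oplus p_3\to m\oplus p_3$ with $g'$. Using $\phi_0\phi_1=0$, $f=\phi_0\gamma_0$, $\gamma_0 g=\phi_1\gamma_1$ and $f'g'=0$, one checks that the spliced differentials coincide with the block morphisms in the statement, so that applying $\cE(l,-)$ returns $\cP_A$. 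Since $l$ is good, $p_1,p_2,p_3\in\add(r)\subseteq\add(l)$, and hence every term of $\cP_A$ is a projective right $A$-module.

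Next I would compute the homology of $\cP_A$. Rigidity of $l$ forces $\Ext^1_{\cE}(l,X)=0$ for all $X\in\add(l)$ (this is the argument already used in \Cref{lem:aug_proj_res_C} via \Cref{lem:ext_is_C}). Applying $\cE(l,-)$ to the three conflations, the first remains short exact, whereas for the other two the long exact $\Ext$-sequence together with \Cref{lem:ext_is_C} identifies the cokernel of the map induced by the deflation with $\Ext^1_{\cE}(l,m\oplus p_3)\cong\cC(l,\Omega^{-1}m)$, respectively $\Ext^1_{\cE}(l,\Omega m)\cong\cC(l,m)$. A diagram chase through the splice then yields $H_0(\cP_A)\cong\cC(l,m)$, $H_1(\cP_A)\cong\cE(l,\Omega m)/\Im(f'_*)\cong\cC(l,\Omega^{-1}m)$ and $H_i(\cP_A)=0$ for $i\ne0,1$; since $\Sigma^2m\cong m$ one has $\Omega^{-1}m\cong\Omega m$, so this agrees with the homology of the complex \cref{eq:P-quasi-iso} recorded in \Cref{cor:b_tens_T_1}.

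Finally I would produce the quasi-isomorphism. The maps ${\phi_0}_{*}$ in degree $0$, the projection $\cE(l,p_1\oplus l_1)\to\cE(l,p_1)$ in degree $1$, $\pm\id$ in degree $2$, and the map $\cE(l,l_0\oplus p_3)\to\cE(l,m\oplus p_3)$ induced by the deflation in degree $3$ (and $0$ in degree $4$) assemble, up to sign, into a chain map from $\cP_A$ to the complex \cref{eq:exact_seq_m} of \Cref{lem:B_tens_T_homology}; using $\Ker({\phi_0}_{*})=\Im({\phi_1}_{*})$, the surjectivity of ${\phi_0}_{*}$, and $\Ker(f'_*)=\Im(g'_*)$, one checks that this chain map induces isomorphisms on $H_0$ and $H_1$, hence is a quasi-isomorphism, and composing it with the soft-truncation quasi-isomorphism of \Cref{cor:b_tens_T_1} exhibits $\cP_A$ as quasi-isomorphic to \cref{eq:P-quasi-iso}. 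Put differently, $\cP_A$ is $\cE(l,-)$ applied to the diagram underlying \cref{eq:exact_seq_m} with its two non-projective terms $\cE(l,m)$ and $\cE(l,m\oplus p_3)$ replaced by the two-term projective resolutions $\cE(l,l_1)\to\cE(l,l_0)$ and $\cE(l,l_1)\to\cE(l,l_0\oplus p_3)$ supplied by the first conflation; this is legitimate because the only differential that must be lifted, $f_*\colon\cE(l,p_1)\to\cE(l,m)$, lifts along ${\phi_0}_{*}$ (to ${\gamma_0}_{*}$) by projectivity of $\cE(l,p_1)$. I expect the only genuine difficulty to be organisational: matching the spliced block-matrix differentials — in particular the entries involving $\gamma_0,\gamma_1$ and the signs — with the morphisms written in the statement, and checking that the comparison map is an isomorphism on $H_0$ and $H_1$; the homological input itself is routine given \Cref{lem:ext_is_C} and the rigidity of $l$.
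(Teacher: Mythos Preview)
Your argument is correct and arrives at the same conclusion as the paper, but the organisation differs. The paper first builds an augmented projective resolution of $\cE(l,p_1)/\Im(gf')_*$ by splicing only the first two conflations and soft-truncating, then places this resolution and the short resolution $0\to\cE(l,l_1)\to\cE(l,l_0)\to\cE(l,m)\to 0$ as the two rows of a double complex $C$ with vertical maps $(\gamma_1 f')_*,\ {\gamma_0}_*,\ f_*$ coming from the push-out; since both rows are exact, $\Tot^{\oplus}(C)$ is acyclic, and one recognises $\Tot^{\oplus}(C)$ (up to sign) as the mapping cone of a map $\cP_A\to\text{\cref{eq:P-quasi-iso}}$, which is therefore a quasi-isomorphism without any explicit homology check. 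You instead splice all three conflations in $\cE$ --- including the push-out conflation \cref{eq:pushout_exact_seq} itself --- to realise $\cP_A$, compute $H_i(\cP_A)$ directly from the three long exact $\Ext$-sequences, and then exhibit an explicit chain map to the complex \cref{eq:exact_seq_m}, verifying by hand that it is an isomorphism on $H_0$ and $H_1$. Your route is more elementary, avoiding the total-complex machinery, at the price of the explicit $H_0$, $H_1$ verifications and the sign bookkeeping you flag; the paper's route absorbs those checks into the single statement that $\Tot^{\oplus}(C)$ is acyclic. Your closing remark about replacing the two non-projective terms of \cref{eq:exact_seq_m} by two-term projective resolutions is precisely the double-complex picture the paper uses.
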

\begin{proof}
    Firstly, by \Cref{lem:ext_is_C} there is an exact sequence
    \begin{equation*}
        \label{eq:E_on_appr}
    \begin{tikzcd}
        0\rar&
        \cE(l,l_1) \rar["{\phi_1}_*"]& 
        \cE(l, l_0) \rar["{\phi_0}_*"]&
        \cE(l,m)\rar&0.
    \end{tikzcd}
    \end{equation*}

    From this follows another exact sequence: 

    \begin{equation}
        \label{eq:E_on_appr_w_p}
        \begin{tikzcd}[ampersand replacement=\&,column sep=3em]
        0\rar\&
        \cE(l,l_1) \rar["{\begin{psmallmatrix} \phi_1\\0 \end{psmallmatrix}}_{\hspace{-2pt}*}"]\& 
        \cE(l, l_0\oplus p_3) \rar["{ \begin{psmallmatrix} \phi_0&0\\0&\id \end{psmallmatrix}}_{\hspace{-2pt}*}"]\&
        \cE(l,m\oplus p_3)\rar\&0.
    \end{tikzcd}
    \end{equation}

    Now let
    \begin{equation*}
        \psi = g'\begin{pmatrix}
            \phi_0 & 0\\ 0&\id
        \end{pmatrix} : l_0 \oplus p_3 \xrightarrow{\begin{psmallmatrix} \phi_0 & 0\\ 0&\id \end{psmallmatrix}} m\oplus p_3 \xrightarrow{\ g'\ } p_2. 
    \end{equation*}

    By concatenating the sequences from \cref{eq:exact_seq_m,eq:E_on_appr_w_p}, followed by a soft truncation, the following exact sequence is obtained:
    \begin{equation*}
        \begin{tikzcd}[ampersand replacement=\&,column sep=2em]
        0 \rar\& 
        \cE(l,l_1) \rar["{\begin{psmallmatrix} \phi_1\\ 0 \end{psmallmatrix}_{\hspace{-2pt}*}}"]\& 
        \cE(l, l_0 \oplus p_3) \rar["\psi_*"]\& 
        \cE(l,p_2) \rar["(gf')_*"]\& 
        \cE(l, p_1)\rar["\pr"]\& 
        \cE(l,p_1) / \Im(gf')_* \rar\&0.
    \end{tikzcd}
    \end{equation*}

    This exact sequence is an augmented projective resolution of $\cE(l,p_1) / \Im(gf')_*$.
    Consider the following double complex $C$:

    \begin{equation*}
        \begin{tikzcd}[ampersand replacement=\&,column sep=2em]
        0 \rar\dar\& 
        \cE(l,l_1) \rar["{\begin{psmallmatrix} \phi_1\\ 0 \end{psmallmatrix}_{\hspace{-2pt}*}}"]\dar\& 
        \cE(l, l_0 \oplus p_3) \rar["\psi_*"]\dar\& 
        \cE(l,p_2) \dar["(\gamma_1 f')_*"]\rar["(gf')_*"]\& 
        \cE(l, p_1) \dar["{\gamma_0}_*"]\rar["\pr"]\& 
        \cE(l,p_1) / \Im(gf')_* \dar["f_*"]\rar\&0\dar\\
        0 \rar\& 
        0 \rar\& 
        0 \rar\& 
        \cE(l,l_1) \rar["{\phi_1}_*"]\& 
        \cE(l, l_0) \rar["{\phi_0}_*"]\&
        \cE(l,m)\rar\&0,
    \end{tikzcd}
    \end{equation*}
    where each row is an augmented projective resolution of $A^{\op}$-modules.
    This means that each row is exact and thus the total complex $\Tot^{\oplus}(C)$ is exact, see \cite[lem. 2.7.3]{weibel1994introduction}.
    Next consider the following morphism of complexes:
    \begin{equation*}
     \begin{tikzcd}[ampersand replacement=\&, column sep=2.3em]
        0 \rar\& 
        \cE(l,l_1) \rar["{\begin{psmallmatrix} \phi_1\\ 0 \end{psmallmatrix}_{\hspace{-2pt}*}}"]\& 
        \cE(l, l_0 \oplus p_3) \rar["\psi_*"]\& 
        \cE(l,p_2) \rar["{\begin{psmallmatrix} gf'\\ \gamma_1f' \end{psmallmatrix}_{\hspace{-2pt}*}}"]\& 
        \cE(l, p_1 \oplus l_1 ) \rar["{\begin{psmallmatrix} {-\gamma_0} &\hspace{-0pt} \phi_1 \end{psmallmatrix}_{\hspace{-1pt}*}}"]\dar["\pr\circ{\begin{psmallmatrix}\id_{p_1}&0 \end{psmallmatrix}_{\hspace{-1pt}*} }"]\&
        \cE(l, l_0)\dar["{\phi_0}_*"]\\
        \&\&\&\&
        \cE(l,p_1) / \Im(gf')_*\rar["f_*"']\&
        \cE(l,m).
    \end{tikzcd}
    \end{equation*}

    By \cite[ex. 1.2.8]{weibel1994introduction}, the mapping cone of this morphism is $\Tot^{\oplus}(C)$ up to a sign.
    Since $\Tot^{\oplus}(C)$ is acyclic, this means that the morphism is a quasi-isomorphism.
    We now have a complex of projective objects $\cP_A$: 
    \begin{equation*}
     \begin{tikzcd}[ampersand replacement=\&, column sep=3em]
        0 \rar\& 
        \cE(l,l_1) \rar["{\begin{psmallmatrix} \phi_1\\ 0 \end{psmallmatrix}_{\hspace{-2pt}*}}"]\& 
        \cE(l, l_0 \oplus p_3) \rar["\psi_*"]\& 
        \cE(l,p_2) \rar["{\begin{psmallmatrix} gf'\\ \gamma_1f' \end{psmallmatrix}_{\hspace{-2pt}*}}"]\& 
        \cE(l, p_1 \oplus l_1 ) \rar["{\begin{psmallmatrix} {-\gamma_0} &\hspace{0pt} \phi_1 \end{psmallmatrix}_{\hspace{-1pt}*}}"]\&
        \cE(l, l_0),
    \end{tikzcd}
    \end{equation*}
    which is quasi-isomorphic to the complex from \cref{eq:P-quasi-iso}.\qedhere
\end{proof}

For the rest of this section we will use the complex $\cP_A$ from the previous lemma.

\begin{lemma}
    \label{lem:dwp}
    The degreewise projection $\Phi: \cP_A\rightarrow P_A$
\begin{equation*}
 \begin{tikzcd}[ampersand replacement=\&, column sep=3em]
    0 \rar\& 
    \cE(l,l_1) \rar["{\begin{psmallmatrix} \phi_1\\ 0 \end{psmallmatrix}_{\hspace{-2pt}*}}"]\& 
    \cE(l, l_0 \oplus p_3) \rar["\psi_*"]\& 
    \cE(l,p_2) \rar["{\begin{psmallmatrix} gf'\\ \gamma_1f' \end{psmallmatrix}_{\hspace{-2pt}*}}"]\& 
    \cE(l, p_1 \oplus l_1 ) \rar["{\begin{psmallmatrix} {-\gamma_0} &\hspace{-0pt} \phi_1 \end{psmallmatrix}_{\hspace{-1pt}*}}"]\dar["\pr\circ{\begin{psmallmatrix}0&\id_{l_1} \end{psmallmatrix}_{\hspace{-1pt}*} }"]\&
    \cE(l, l_0)\dar["\pr"]\\
    \&\&\&\&
    \cC(l,l_1)\rar["\phi_{1*}"']\&
    \cC(l,l_0)
\end{tikzcd}
\end{equation*}

is a quasi-isomorphism.

\end{lemma}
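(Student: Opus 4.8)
The plan is to exhibit $\Phi$ as a surjection of complexes and prove that its kernel is acyclic. First I would check that $\Phi$ really is a morphism of complexes: the one square that needs verifying commutes because for $\binom{a}{b}\in\cE(l,p_1\oplus l_1)$ one has $\pr(-\gamma_0 a+\phi_1 b)={\phi_1}_*(\underline{b})$, using that $\gamma_0 a$ factors through the projective $p_1$; and $\Phi$ annihilates the image of the degree-$2$ differential because $\gamma_1 f'$ factors through $p_2$. Since $P_A$ is concentrated in homological degrees $0$ and $1$ and $\Phi$ is literally a projection there, $\Phi$ is surjective in every degree, so there is a short exact sequence of complexes $0\to K\to\cP_A\xrightarrow{\Phi}P_A\to0$ with $K=\Ker\Phi$, and it suffices to show $H_\ast(K)=0$. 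To make $K$ explicit I would use that $l$ is good, so the projective generator $r$—and hence every projective object of $\cE$—lies in $\add(l)$; a short standard argument then identifies, for each $z\in\cE$, the morphisms $l\to z$ factoring through a projective with $\cE(l,z)\fa$, where $\fa\trianglelefteq A=\cE(l,l)$ is the ideal of endomorphisms of $l$ factoring through a projective (in particular $\cE(l,p_1)\fa=\cE(l,p_1)$). Thus $K$ is obtained from $\cP_A$ by replacing the degree-$0$ and degree-$1$ terms $\cE(l,l_0)$ and $\cE(l,p_1\oplus l_1)$ by the submodules $\cE(l,l_0)\fa$ and $\cE(l,p_1\oplus l_1)\fa$, and leaving everything else untouched; since $K$ then agrees with $\cP_A$ in degrees $\ge2$ and \Cref{lem:projres_cP} together with \Cref{lem:B_tens_T_homology} shows $\cP_A$ has homology only in degrees $0$ and $1$, we get $H_n(K)=0$ for $n\ge2$ for free.

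For degree $0$, write $d_1={\begin{psmallmatrix}-\gamma_0&\phi_1\end{psmallmatrix}}_*\colon\cE(l,p_1\oplus l_1)\to\cE(l,l_0)$. Then $(K)_0=\cE(l,l_0)\fa$ and $d_1\bigl(\cE(l,p_1\oplus l_1)\fa\bigr)=(\Im d_1)\fa$ by $A$-linearity of $d_1$; the inclusion $(\Im d_1)\fa\subseteq\cE(l,l_0)\fa$ is clear, and conversely any generator $q\alpha$ with $q\colon l\to l_0$ and $\alpha\in\fa$ can be written as $q\alpha=d_1(w\alpha'')$ by factoring $\alpha=\alpha'\alpha''$ through a projective $P$ and lifting $q\alpha'\colon P\to l_0$ along the deflation ${\begin{psmallmatrix}-\gamma_0&\phi_1\end{psmallmatrix}}\colon p_1\oplus l_1\to l_0$ (which is a deflation by \cref{eq:pushout_exact_seq}). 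Hence the degree-$1$ differential of $K$ is onto $(K)_0$ and $H_0(K)=0$.

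The hard part, and the reason \Cref{lem:proj_ideal_cap} is in the preliminaries, is $H_1(K)=0$. Here $Q\coloneqq\cE(l,l_0)$ is a projective right $A$-module and $\Coker d_1=H_0(\cP_A)\cong\cC(l,m)$, so \Cref{lem:proj_ideal_cap} applies with $y=m$ and gives $M\fa\cap\Ker d_1=(\Ker d_1)\fa$, where $M\coloneqq\cE(l,p_1\oplus l_1)$; the left-hand side is precisely the module of $1$-cycles of $K$. Applying the left-exact functor $\cE(l,-)$ to the conflation \cref{eq:pushout_exact_seq} shows $\Ker d_1=\Im{\begin{psmallmatrix}g\\\gamma_1\end{psmallmatrix}}_*$ with ${\begin{psmallmatrix}g\\\gamma_1\end{psmallmatrix}}_*$ injective, so $(\Ker d_1)\fa={\begin{psmallmatrix}g\\\gamma_1\end{psmallmatrix}}_*\!\bigl(\cE(l,\Omega m)\fa\bigr)$, whereas the $1$-boundaries of $K$ are $\Im d_2={\begin{psmallmatrix}g\\\gamma_1\end{psmallmatrix}}_*\!\bigl(\Im f'_*\bigr)$. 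It then remains to observe that $\cE(l,\Omega m)\fa=\Im f'_*$, which holds because $f'\colon p_2\to\Omega m$ is a deflation through which any morphism $l\to\Omega m$ factoring through a projective can be lifted. So the $1$-cycles and $1$-boundaries of $K$ coincide, $H_1(K)=0$, $K$ is acyclic, and therefore $\Phi$ is a quasi-isomorphism. The only genuinely delicate step is this degree-$1$ computation: the naive "cycles are boundaries" argument fails without the $\Tor_2^A(\cC(l,m),{}_A\uA)=0$ input of \Cref{lem:tor2_is_0} that underlies \Cref{lem:proj_ideal_cap}.
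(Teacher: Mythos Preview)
Your proof is correct and genuinely different from the paper's. The paper does not pass through the kernel $K$: instead it first computes $H_i(P_A)$ directly from the triangle $l_1\to l_0\to m\to\Sigma l_1$ (obtaining $\cC(l,\Omega m)$ in degree $1$ and $\cC(l,m)$ in degree $0$, the same values as $H_i(\cP_A)$), then uses Hom-finiteness of $\cC$ to say that $H_i(\Phi)$ is an isomorphism once it is surjective, and finally checks surjectivity of $H_1(\Phi)$ by an explicit element chase using the deflation $\begin{psmallmatrix}-\gamma_0&\phi_1\end{psmallmatrix}$ from \cref{eq:pushout_exact_seq}. In particular the paper's argument for \Cref{lem:dwp} does \emph{not} invoke \Cref{lem:proj_ideal_cap} or the $\Tor_2$-vanishing of \Cref{lem:tor2_is_0}; those are reserved for the two-sided upgrade in \Cref{thm:cTT-iso}.

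Your route trades the finite-dimensionality shortcut for a clean ``kernel is acyclic'' argument, at the cost of importing \Cref{lem:proj_ideal_cap} earlier. This is a fair trade: the lemma is already available, and your proof would survive without the Hom-finiteness hypothesis at this step. One small point to tighten: when you say ``$K$ agrees with $\cP_A$ in degrees $\ge2$, so $H_n(K)=0$ for $n\ge2$'', the case $n=2$ also needs that $d_2^{\cP_A}$ already lands in $K_1=\cE(l,p_1\oplus l_1)\fa$ (which it does, since every map in its image factors through the projective $p_2$); this ensures $\Ker d_2^K=\Ker d_2^{\cP_A}$ and hence $H_2(K)=H_2(\cP_A)=0$.
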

\begin{proof}

By \Cref{lem:projres_cP} the top row $\cP_A$ is quasi-isomorphic to $\tensor{\underline{B}}{_B}\ltensor_B \tensor[_B]{T}{_A}$ and hence 
 is exact in all degrees other than 0,1 by \Cref{lem:B_tens_T_homology}.
Thus to prove the lemma, it is enough to show that $H_i(\Phi)$ is an isomorphism for $i=0,1$. 
First we check that the homology of the two complexes agree, after which
it is enough to check that $H_i(\Phi)$ is surjective because the homology spaces are finite dimensional over $k$.
\medbreak
    
Since there is a triangle $\Omega m\to l_1 \to l_0 \to m$, and since $l$ is rigid with $\Omega l\cong \Sigma^{-1} l\cong \Sigma l$, 
there is an exact sequence

\begin{equation*}
\begin{tikzcd}
    0 \rar&
    \cC(l,\Omega m) \rar&
    \cC(l,l_1) \rar&
    \cC(l, l_0) \rar&
    \cC(l, m) \rar&
    0.
\end{tikzcd}
\end{equation*}

In combination with \Cref{cor:b_tens_T_1} and \Cref{lem:B_tens_T_homology,lem:projres_cP}
this gives that $H_i(P_A) \cong H_i(\cP_A)$, for every $i\in\ZZ$.

\medbreak
Next we show that $H_i(\Phi)$ is surjective. For $i=0$ this is straightforward.
For $i=1$ consider the diagram

\begin{equation*}
 \begin{tikzcd}[ampersand replacement=\&, column sep=3em]
     \cE(l, p_1 \oplus l_1 ) \rar["{\begin{psmallmatrix} {-\gamma_0} &\phi_1 \end{psmallmatrix}_{\hspace{-2pt}*}}"]\dar["{\pr \begin{psmallmatrix}0&\id_{l_1}\end{psmallmatrix} }_*"]\&
    \cE(l, l_0)\dar["\pr"]\\
    \cC(l,l_1)\rar["{\phi_1}_*"]\&
    \cC(l,l_0).
\end{tikzcd}
\end{equation*}

Let $\alpha\in \cE(l,l_1)$ such that the projection $\underline{\alpha}\in \cC(l,l_1)$ lies in $\Ker{\phi_1}_*$.
Thus $\underline{\phi_1\alpha}=0$, which means that $\phi_1\alpha$ factors through a projective object $q\in\cE$, say by $\phi_1\alpha = \rho'\rho$,
where $\rho:l\rightarrow q$ and $\rho':q\rightarrow l_0$.
By using \cref{eq:pushout_exact_seq} there is a commutative diagram

\begin{equation*}
    \begin{tikzcd}[ampersand replacement=\&]
        \&q\drar["\rho'"]\rar["{\begin{pmatrix} a_1 \\ a_2 \end{pmatrix} }", dashed]
    \&p_1\oplus l_1 \dar["{\begin{pmatrix} -\gamma_0 &\hspace{-5pt}\phi_1 \end{pmatrix} }",twoheadrightarrow]\\
    l\rar["\alpha"]\urar["\rho"]\&l_1\rar["\phi_1"]\&l_0,
\end{tikzcd}
\end{equation*}

where $a_1,a_2$ exist by using the fact that $q$ is projective.
Now calculate
\begin{equation*}
    \phi_1\alpha = \begin{pmatrix} -\gamma_0 & \phi_1 \end{pmatrix}\begin{pmatrix}
        a_1 \\ a_2
    \end{pmatrix} \rho
    = \phi_1 a_2 \rho - \gamma_0 a_1 \rho.
\end{equation*}

Therefore
\begin{equation*}
    0 = \phi_1 (a_2\rho - \alpha) - \gamma_0a_1\rho
    =\begin{pmatrix} -\gamma_0 & \phi_1 \end{pmatrix} \begin{pmatrix} a_1\rho \\ a_2\rho-\alpha \end{pmatrix}
    =\begin{pmatrix} -\gamma_0 & \phi_1 \end{pmatrix}_*\begin{pmatrix} a_1\rho \\ a_2\rho-\alpha \end{pmatrix}.
\end{equation*}

This means that 
$\begin{pmatrix}a_1\rho\\ a_2\rho-\alpha\end{pmatrix} \in\Ker \begin{pmatrix} -\gamma_0 & \phi_1 \end{pmatrix}_*$,
making $\begin{pmatrix}a_1\rho\\ a_2\rho-\alpha \end{pmatrix}+ \Im \begin{pmatrix} gf'\\ \gamma_1f' \end{pmatrix}_{\hspace{-2pt}*}$ an element of $H_1(\cP_A)$. Now
\begin{equation*}
    H_1(\Phi)\Bigg(-\begin{pmatrix} a_1\rho \\ a_2\rho-\alpha \end{pmatrix}
    + \Im \begin{pmatrix} gf'\\ \gamma_1f' \end{pmatrix}_{\hspace{-2pt}*}\Bigg) = \underline\alpha-\underline{a_2\rho }  = \underline \alpha,
\end{equation*}
which means that $H_1(\Phi)$ is surjective, and therefore $\Phi$ is surjective on homology.\qedhere

\end{proof}

\begin{corollary} There are two isomorphisms 
\label{cor:oneside_theorem}
    \begin{enumerate}[label=(\alph*)]
        \item $\cT_\uA =  \left(\tensor{\underline{B}}{_B} \ltensor_B \tensor[_B]{T}{_A} \ltensor_A \tensor[_A]{\uA}{_\uA}  \right)_{\subseteq 1} \cong P_\uA$ in $\pD(\underline{A}^{\op})$. \label{cor:sim:1}
        \item $\tensor{\cT}{_\uA} \ltensor_{\underline{A}} \tensor[_\uA]{\uA}{_A}  \cong  \tensor{\underline{B}}{_B} \ltensor_B \tensor[_B]{T}{_A}$ in $\pD(A^{\op})$.\label{cor:sim:2}
    \end{enumerate}
\end{corollary}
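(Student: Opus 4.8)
The plan is to route everything through the complex $\cP_A$ from \Cref{lem:projres_cP}. Its crucial feature is that it is a bounded complex of finitely generated projective right $A$-modules: every term has the shape $\cE(l,w)$ with $w$ a direct summand of a finite power of $l$ — the objects $l_0,l_1$ lie in $\add l$ by construction, and since $l$ is good we have $r\in\add l$, so the projective objects $p_1,p_2,p_3$ lie in $\add l$ as well. Hence $\cP_A$ is K-flat over $A$, and by \Cref{cor:b_tens_T_1} together with \Cref{lem:projres_cP} it is quasi-isomorphic to $\tensor{\underline{B}}{_B}\ltensor_B\tensor[_B]{T}{_A}$ in $\pD(A^{\op})$.

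For part (a), I would first use K-flatness of $\cP_A$ to get $\tensor{\underline{B}}{_B}\ltensor_B\tensor[_B]{T}{_A}\ltensor_A\tensor[_A]{\uA}{_\uA}\cong\cP_A\otimes_A\tensor[_A]{\uA}{_\uA}$ in $\pD(\uA^{\op})$, and then compute the right-hand complex degreewise before truncating. The key point is that for $w\in\add l$ the composition map $\cE(l,w)\otimes_A\uA\to\cC(l,w)$ is an isomorphism: this reduces to $w=l$, where it is the identity $A\otimes_A\uA=\uA$, together with additivity and naturality; equivalently, writing $\uA=A/\fa$ one checks that $\cE(l,w)\fa$ already contains every morphism $l\to w$ factoring through a projective. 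Moreover $\cC(l,w)=0$ when $w$ is a projective object. Applying this to each term of $\cP_A$, the projective summands $\cE(l,p_1),\cE(l,p_2),\cE(l,p_3)$ all become zero — in particular the whole term in homological degree $2$ vanishes — so $\cP_A\otimes_A\uA$ acquires a gap there and splits as a direct sum of complexes: a copy of $P_\uA$ (namely $\cC(l,l_1)\xrightarrow{\phi_{1*}}\cC(l,l_0)$) in homological degrees $0,1$, together with a shifted copy of $P_\uA$ in homological degrees $3,4$. Soft truncation to homological degrees $\leq 1$ annihilates the second summand and leaves the first one untouched, so $(\cP_A\otimes_A\uA)_{\subseteq 1}=P_\uA$; as soft truncation carries quasi-isomorphisms to quasi-isomorphisms, this yields $\cT_\uA\cong P_\uA$ in $\pD(\uA^{\op})$.

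For part (b), I would combine (a) with the observation that $-\otimes_\uA\tensor[_\uA]{\uA}{_A}$ is nothing but restriction of scalars along the surjection $A\twoheadrightarrow\uA$; being exact, it is its own left-derived functor. Hence
\[
    \tensor{\cT}{_\uA}\ltensor_\uA\tensor[_\uA]{\uA}{_A}\ \cong\ P_\uA\ltensor_\uA\tensor[_\uA]{\uA}{_A}\ =\ P_A,
\]
where $P_A$ denotes the complex $\cC(l,l_1)\xrightarrow{\phi_{1*}}\cC(l,l_0)$ regarded over $A^{\op}$. Finally \Cref{lem:dwp} supplies a quasi-isomorphism $\cP_A\to P_A$, and then \Cref{cor:b_tens_T_1} and \Cref{lem:projres_cP} give $P_A\cong\cP_A\cong\tensor{\underline{B}}{_B}\ltensor_B\tensor[_B]{T}{_A}$ in $\pD(A^{\op})$, which is exactly statement (b).

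The only delicate part is the bookkeeping with homological degrees, which must be carried out so that the soft truncation removes precisely the spurious shifted copy of $P_\uA$ while retaining the honest one. Beyond that, nothing needs to be derived that is not already flat (the first tensor, because $\cP_A$ is K-flat over $A$) or exact (the second, because it is restriction of scalars), and the one genuinely new computation — the degreewise value of $\cP_A\otimes_A\uA$ — is immediate from $\cE(l,w)\otimes_A\uA\cong\cC(l,w)$ for $w\in\add l$ and the vanishing $\cC(l,p)=0$ on projectives.
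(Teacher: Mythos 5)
Your argument is correct and follows essentially the same route as the paper's proof: both pass through the projective complex $\cP_A$, compute $\cP_A\otimes_A\uA\cong P_\uA\oplus P_\uA[3]$ degreewise (the paper cites \Cref{lem:wif}\ref{lem:item:wif3} where you verify $\cE(l,w)\otimes_A\uA\cong\cC(l,w)$ directly), and truncate; for (b) both identify $-\ltensor_\uA\tensor[_\uA]{\uA}{_A}$ with restriction of scalars and invoke the quasi-isomorphism of \Cref{lem:dwp}. The extra care you take with K-flatness and with $p_i\in\add(l)$ via $l$ being good is exactly what makes the paper's one-line citations legitimate.
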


\begin{proof}
\ref{cor:sim:1}
\Cref{cor:b_tens_T_1} and \Cref{lem:projres_cP} imply $\cP_A\cong \tensor{\underline{B}}{_B}\ltensor_{B} \tensor[_B]{T}{_A}$. \Cref{lem:wif}\ref{lem:item:wif3} implies that $\tensor{\cP}{_A} \tens_A \tensor[_A]{\uA}{_\uA} \cong P_\uA\oplus P_\uA[3]$. 
Thus
\[
    \tensor{\underline{B}}{_B} \ltensor_B \tensor[_B]{T}{_A} \ltensor_A \tensor[_A]{\uA}{_\uA}
    \cong \tensor{\cP}{_A} \tens_A \tensor[_A]{\uA}{_\uA}
    \cong \tensor{P}{_\uA}\oplus \tensor{P}{_\uA}[3],
\]
giving that
\[
    \cT_\uA \cong (P_\uA \oplus P_\uA[3])_{\subseteq 1} \cong P_\uA.
\]

\ref{cor:sim:2}  It follows from \ref{cor:sim:1} combined with \Cref{cor:b_tens_T_1} and \Cref{lem:projres_cP,lem:dwp} that
\[
    \cT_\uA \ltensor_{\underline{A}} {}_\uA\underline{A}_A
    \cong P_\uA\ltensor_{\underline{A}} {}_\uA\underline{A}_A
    \cong P_A
    \cong \cP_A
    \cong \tensor{\underline{B}}{_B} \ltensor_B \tensor[_B]{T}{_A}.\qedhere
\]
    
\end{proof}

We now have a one-sided isomorphism $\cT_\uA \ltensor_{\underline{A}} {}_\uA\underline{A}_A  \cong  \tensor{\underline{B}}{_B} \ltensor_B\tensor[_B]{T}{_A} $ in $\pD(A^{\op})$. 
Next we see that this isomorphism can be lifted to a two-sided isomorphism, i.e. an isomorphism in $\pD(\underline{B}\otimes A^{\op})$.

\begin{theorem} 
    \label{thm:cTT-iso}
    In the derived category $\pD(\underline{B}\otimes A^{\op})$ (resp. $\pD(B\otimes \underline{A}^{\op})$), there is an isomorphism
    \[  
        \tensor[_\uB]{\!\cT}{_\uA} \ltensor_{\underline{A}} \tensor[_\uA]{\uA}{_A} \cong  \tensor[_\uB]{\underline{B}}{_{B}} \ltensor_B 
        \tensor[_B]{T}{_A}
        \qquad\left(\text{resp.}\hspace{.4em} \tensor[_B]{\underline{B}}{_\uB}\ltensor_\uB \,\tensor[_\uB]{\!\cT}{_\uA}  
        \cong \tensor[_B]{T}{_A} \ltensor_A \tensor[_A]{\underline{A}}{_\uA}\right). 
    \]
\end{theorem}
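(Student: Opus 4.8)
The plan is to exhibit the isomorphism directly, rather than invoking some abstract procedure for lifting the one‑sided isomorphism of \Cref{cor:oneside_theorem} to a two‑sided one. The point is that the functor $G\colon\pD(\uB\otimes A^{\op})\to\pD(A^{\op})$ forgetting the left $\uB$‑action is conservative — it sends the cone of a morphism to the cone of its image and detects acyclicity — so it suffices to produce a morphism in $\pD(\uB\otimes A^{\op})$ between the two complexes in question whose image under $G$ is an isomorphism.

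Set $X\coloneqq\tensor[_\uB]{\uB}{_B}\ltensor_B\tensor[_B]{T}{_A}$. Since soft truncation commutes with the exact functor $\ltensor_\uA\tensor[_\uA]{\uA}{_A}$, one has $\tensor[_\uB]{\!\cT}{_\uA}\ltensor_\uA\tensor[_\uA]{\uA}{_A}=(X\ltensor_A\tensor[_A]{\uA}{_A})_{\subseteq 1}$, where now $\uA$ carries its $A$‑bimodule structure. First I would consider the composite in $\pD(\uB\otimes A^{\op})$
\[
    \mu\colon\quad X\ \xrightarrow{\ \eta\ }\ X\ltensor_A\tensor[_A]{\uA}{_A}\ \xrightarrow{\ \pi\ }\ \bigl(X\ltensor_A\tensor[_A]{\uA}{_A}\bigr)_{\subseteq 1}=\tensor[_\uB]{\!\cT}{_\uA}\ltensor_\uA\tensor[_\uA]{\uA}{_A},
\]
where $\eta$ is the unit of the adjunction (extension of scalars along $A\to\uA$) $\dashv$ (restriction), applied to $X$, and $\pi$ is the canonical morphism onto the soft truncation. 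Both $\eta$ and $\pi$ are instances of natural transformations that lift to the relevant categories of bimodule complexes, so $\mu$ is indeed a morphism of $\uB$‑$A$‑bimodule complexes; checking this compatibility is the one step that needs a little care.

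It then remains to check that $G\mu$ is an isomorphism in $\pD(A^{\op})$. By \Cref{lem:B_tens_T_homology} and \Cref{cor:b_tens_T_1}, the $A^{\op}$‑complex underlying $X$ has homology concentrated in degrees $0$ and $1$, namely $\cC(l,m)$ and $\cC(l,\Omega m)$, and these are already $\uA$‑modules. The hyper‑$\Tor$ spectral sequence $\Tor_p^A(H_q(X),\uA)\Rightarrow H_{p+q}(X\ltensor_A\uA)$, combined with the vanishing $\Tor_1^A(\cC(l,y),\uA)=\Tor_2^A(\cC(l,y),\uA)=0$ for all $y\in\cE$ — which is exactly the computation carried out inside the proof of \Cref{lem:tor2_is_0}, even though only the statement for $\Tor_2$ is recorded — shows that $G\eta$ induces isomorphisms on $H_0$ and $H_1$. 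Since $G\pi$ induces isomorphisms on homology in degrees $\le 1$ and $X$ has no homology in higher degrees, $G\mu=G\pi\circ G\eta$ is a quasi‑isomorphism. By conservativity of $G$, $\mu$ is an isomorphism in $\pD(\uB\otimes A^{\op})$, which is the first assertion. The second (``resp.'') assertion is proved symmetrically, interchanging the two sides: one forms the analogous composite $\tensor[_B]{T}{_A}\ltensor_A\uA\to\tensor[_B]{\uB}{_\uB}\ltensor_\uB(T\ltensor_A\uA)\to\bigl(\tensor[_B]{\uB}{_\uB}\ltensor_\uB(T\ltensor_A\uA)\bigr)_{\subseteq 1}=\cT$ out of the unit of extension of scalars along $B\to\uB$ and the truncation map, and checks it is a quasi‑isomorphism after forgetting the left $B$‑action, now using the resolution $Q_B$ of \cref{eq:proj_res_uB} to compute $\Tor_*^B(\uB_B,\cC(l,-))$.

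The main obstacle is thus not conceptual but organizational: one must set up the extension‑of‑scalars and soft‑truncation functors together with their units and projections at the level of bimodule complexes, so that $\mu$ is visibly $\uB$‑linear (equivalently $B$‑linear in the second statement) and not merely a morphism in $\pD(A^{\op})$. Once that is arranged, the proof consumes only the homology computations already done in \Cref{lem:B_tens_T_homology} and the $\Tor$‑vanishing implicit in the proof of \Cref{lem:tor2_is_0}, with no new ingredients.
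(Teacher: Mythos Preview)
Your argument is correct and shares the paper's overall architecture: both construct the morphism $X\to(X\ltensor_A\uA)_{\subseteq 1}$ as ``unit of extension of scalars followed by truncation'' (the paper's $\pr\circ\alpha$ on a projective resolution $Q$ of $X$ is literally your $\pi\circ\eta$) and then check it is a quasi-isomorphism after forgetting the left $\uB$-action. The genuine difference lies in how the map $\eta$ is shown to induce isomorphisms on $H_0$ and $H_1$. The paper first invokes \Cref{cor:oneside_theorem}\ref{cor:sim:2} to obtain a dimension count reducing the problem to injectivity, and then proves injectivity by an element chase through \Cref{lem:proj_ideal_cap}. You instead use functoriality of the hyper-$\Tor$ spectral sequence in the coefficient variable ($A\to\uA$): since $H_q(X)\cong\cC(l,-)$ is already annihilated by $\fa$, the induced map on $E_2^{0,q}$ is the identity, and the vanishing of $\Tor_1^A$ and $\Tor_2^A$ visible in the proof of \Cref{lem:tor2_is_0} forces $E_\infty^{p,q}=0$ for $p=1,2$, so the filtered map on $H_0,H_1$ is an isomorphism. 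This is more economical: it bypasses \Cref{lem:proj_ideal_cap} entirely and removes the dependence of \Cref{thm:cTT-iso} on \Cref{cor:oneside_theorem} (hence on \Cref{lem:projres_cP,lem:dwp}). Two small points to tidy: make explicit that the $E_2^{0,q}$-map being an isomorphism is exactly the statement $H_q(X)\fa=0$; and in your ``resp.'' sketch the induction bimodule should read $\tensor[_\uB]{\uB}{_B}$ rather than $\tensor[_B]{\uB}{_\uB}$, and one forgets the right $\uA$-action (landing in $\pD(B)$), not the left $B$-action --- but the symmetric argument then goes through exactly as you indicate.
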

\begin{proof}
    
    We will prove the first isomorphism, the second one can be done in a symmetric fashion.
    Let $\tensor[_\uB]{Q}{_A} \simto \tensor[_\uB]{\underline{B}}{_B} \ltensor_B \tensor[_B]{T}{_A}$ be a projective resolution over $\underline{B}\otimes A^{\op}$.
    Denote by $\alpha:\tensor[_\uB]{Q}{_A}\rightarrow \tensor[_\uB]{Q}{_A}\otimes_A \!\tensor[_A]{\underline{A}}{_A}$ the morphism defined by $q\mapsto q\otimes 1$.
    Consider the composition of morphisms 
\[
    \pr\circ \alpha: \tensor[_\uB]{Q}{_A}
    \xrightarrow{\alpha} \tensor[_\uB]{Q}{_A} \tens_A \tensor[_A]{\underline{A}}{_A}
    \xrightarrow{\ \pr\ } (\tensor[_\uB]{Q}{_A} \tens_A \tensor[_A]{\underline{A}}{_A})_{\subseteq 1}.
\]

By construction this is isomorphic to

\begin{equation*}
    \tensor[_\uB]{\underline{B}}{_B}\ltensor_{B} \tensor[_B]{T}{_A} \longrightarrow
    \tensor[_\uB]{\underline{B}}{_B}\ltensor_{B} \tensor[_B]{T}{_A} \ltensor_A \tensor[_A]{\underline{A}}{_A} \longrightarrow
    \left(\tensor[_\uB]{\uB}{_B}\ltensor_{B} \tensor[_B]{T}{_A} \ltensor_A \tensor[_A]{\uA}{_A}\right)_{\subseteq 1}
    \cong \,\tensor[_\uB]{\!\cT}{_{\underline{A}}}\ltensor_{\underline{A}} \tensor[_\uA]{\uA}{_A}.
\end{equation*}

Writing out the composition gives the following, which we will show to be a quasi-isomorphism:

\begin{equation*}
\begin{tikzcd}
    \cdots \rar & Q_2 \dar["\alpha_2"]\rar["d_2"] & Q_1\dar["\alpha_1"] \rar["d_1"] & Q_0\dar["\alpha_0"] \rar & 0\dar \rar & \cdots\\
    \cdots \rar & Q_2\tens_A\underline{A} \dar\rar["d_2\otimes \id"] & Q_1\tens_A\underline{A} \rar["d_1\otimes \id"] \dar["\pr"] & Q_0\tens_A\underline{A} \dar["\pr"]\rar & 0\dar \rar & \cdots\\
    \cdots \rar & 0 \rar[] & Q_1\otimes_A\underline{A}/\Im(d_2\otimes \id)  
    \rar["d_1\otimes \id"] & Q_0\otimes_A\underline{A} \rar & 0 \rar & \cdots.
\end{tikzcd}
\end{equation*}

The projection $\pr :\tensor[_\uB]{Q}{_A} \tens_A \tensor[_A]{\uA}{_A} \to (\tensor[_\uB]{Q}{_A} \tens_A \tensor[_A]{\uA}{_A})_{\subseteq 1}$ of soft truncation is 
an isomorphism on homology in degrees $\leq 1$. 
By \Cref{lem:B_tens_T_homology} we have $H_i(\tensor[_\uB]{Q}{_A}) = 0$ for $i\neq 0,1$.
Therefore it is enough to check that $\alpha$ is an isomorphism on homology in degrees $0$ and $1$.
Moreover, since we are dealing with finite dimensional vector spaces, \Cref{cor:oneside_theorem}\ref{cor:sim:2} gives that for $i=0,1$ the dimensions of $H_i(\tensor[_\uB]{\uB}{_B}\ltensor_B \tensor[_B]{T}{_A})\cong H_i(\tensor[_\uB]{Q}{_A})$ and $H_i(\ \tensor[_\uB]{\!\cT}{_\uA}\!\ltensor_{\underline A} \tensor[_\uA]{\underline{A}}{_A})\cong H_i(\tensor[_\uB] {Q}{_A}\!\otimes_{A}\!\tensor[_A]{\underline{A}}{_A})$ are the same,
making it enough to check that $H_i(\alpha)$ is injective, for $i=0,1$.
\medbreak

Let $i\in \Set{0,1}$, and let $h\in\Ker(d_i)$ be such that $h + \Im(d_{i+1}) \in \Ker(H_i(\alpha))$.
Then there exist morphisms $h_j\in Q_{i+1}$ and $a_j\in {A}$ such that
\begin{align*}
    \Im(d_{i+1}\tens_A \id) \ni \alpha_i(h)
    &= h\tens_A 1  \\
    &= (d_{i+1}\tens_A \id)\Big(\sum_j h_j \tens_A \underline{a_j} \Big)\\
    &= \sum_j (d_{i+1}(h_j)\tens_A \underline{a_j})\\
    &= \sum_j (d_{i+1}(h_ja_j)\tens_A 1)\\
    &=  \Big(\sum_j d_{i+1}(h_ja_j)\Big)\tens_A 1.
\end{align*}

Thus $(h - \sum_j d_{i+1}(h_ja_j)) \otimes_A 1 =0$. 
Letting $\fa$ be the ideal of morphisms in $A$ factoring through a projective object
gives that $h - \sum_j d_{i+1}(h_ja_j) \in Q_i \fa \cap \Ker d_i$.
However, this intersection equals $(\Ker d_i)\fa$, by \Cref{lem:proj_ideal_cap}.
\Cref{lem:proj_ideal_cap} applies for $i=0$ because the cokernel of $Q_0\rightarrow 0$ is $0$ which has the form $\cC(l,y)$ for $y=0$.
\Cref{lem:proj_ideal_cap} applies for $i=1$ because the cokernel of $Q_1\rightarrow Q_0$ is $H_0(Q_A)\cong H_0(\tensor{\underline{B}}{_B}\ltensor_B \tensor[_B]{T}{_A})$
which has the form $\cC(l,m)$ by \Cref{lem:B_tens_T_homology}. 
Since $h - \sum_j d_{i+1}(h_ja_j)\in (\Ker d_i)\fa$ we have

\[
    h - \sum_j d_{i+1}(h_ja_j) = \sum_j \tilde h_j \tilde a_j,
\]
for some $\tilde h_j\in\Ker d_i$ and $\tilde a_j\in\fa$.
By \Cref{lem:B_tens_T_homology} there are isomorphisms over $A^{\op}$:
\[
    \xi_i:H_i(Q_A)\longrightarrow
    \begin{cases}
       \cC(l,m)& i=0,\\
       \cC(l,\Omega m)& i=1,\\
       0 & \text{otherwise.}
    \end{cases}
\]
Notice that $\fa$ annihilates $\Im(\xi_i)$, meaning that  

\[
    \xi_i\Big(\sum_j \tilde h_j \tilde a_j + \Im(d_{i+1}) \Big)
    =\sum_j \xi_i(\tilde h_j +  \Im(d_{i+1})) \tilde a_j
    =0.
\]

Since $\xi_i$ is injective $ h - \sum_j d_{i+1}(h_ja_j)=\sum_j \tilde h_j \tilde a_j\in \Im(d_{i+1})$.
Thus $h\in \Im d_{i+1}$.\qedhere
\end{proof}

\begin{theorem}
    \label{thm:silting_alg}
    The canonical morphisms
    \[ 
        \underline{B} \longrightarrow \End_{\pD(\underline{A}^{\op})} 
        ({}_{\underline{B}}\cT_{\underline{A}})
        \qquad\text{and}\qquad
        \underline{A} \longrightarrow \End_{\pD(\underline{B})} 
        ({}_{\underline{B}}\cT_{\underline{A}}),
    \]
    induced by $-\ltensor_{\underline{B}}{}_\uB\cT_\uA$ and ${}_\uB\cT_\uA\ltensor_{\underline{A}}-$ are isomorphisms.
\end{theorem}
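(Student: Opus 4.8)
The plan is to prove that the canonical map $\underline{B}\to\End_{\pD(\underline{A}^{\op})}(\cT_\uA)$ is an isomorphism; the isomorphism $\underline{A}\to\End_{\pD(\underline{B})}(\cT_\uA)$ follows symmetrically, using the second isomorphism of \Cref{thm:cTT-iso} and restriction along the projection $B\to\underline{B}$ in place of $A\to\underline{A}$. The idea is to compute $\dim_k\End_{\pD(\underline{A}^{\op})}(\cT_\uA)$ by transporting the problem along the known tilting complex $\tensor[_B]{T}{_A}$, and to check separately that the canonical map is injective; since $\cC$ is Hom-finite everything in sight is finite-dimensional over $k$, so these two facts together give bijectivity.

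First I would replace $\cT_\uA$ by the two-term complex $P_\uA$ of finitely generated projective right $\underline{A}$-modules, which is legitimate by \Cref{cor:oneside_theorem}\ref{cor:sim:1}; since $P_\uA$ is concentrated in two non-overlapping homological degrees, $\Hom_{\pD(\underline{A}^{\op})}(P_\uA[3],P_\uA)=0$. Let $\rho\colon\pD(\underline{A}^{\op})\to\pD(A^{\op})$ be restriction along $A\to\underline{A}$, with left adjoint $-\ltensor_A\tensor[_A]{\underline{A}}{_\uA}$. By \Cref{cor:oneside_theorem}\ref{cor:sim:2} (and \Cref{thm:cTT-iso} for the bimodule refinement) one has $\rho(\cT_\uA)\cong\tensor{\underline{B}}{_B}\ltensor_B\tensor[_B]{T}{_A}$ in $\pD(A^{\op})$. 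The complex $\cP_A$ of \Cref{lem:projres_cP} is a bounded complex of projective $A$-modules — here one uses that $l$ is good, so all projective objects lie in $\add l$ — quasi-isomorphic to $\tensor{\underline{B}}{_B}\ltensor_B\tensor[_B]{T}{_A}\cong\rho(\cT_\uA)$, so $\rho(\cT_\uA)\ltensor_A\underline{A}$ is computed by $\cP_A\otimes_A\underline{A}$, which the proof of \Cref{cor:oneside_theorem}\ref{cor:sim:1} identifies with $P_\uA\oplus P_\uA[3]$. Hence the adjunction gives
\begin{equation*}
    \End_{\pD(A^{\op})}(\rho\,\cT_\uA)\cong\Hom_{\pD(\underline{A}^{\op})}\bigl(\rho(\cT_\uA)\ltensor_A\underline{A},\,\cT_\uA\bigr)\cong\Hom_{\pD(\underline{A}^{\op})}(P_\uA\oplus P_\uA[3],P_\uA)=\End_{\pD(\underline{A}^{\op})}(\cT_\uA),
\end{equation*}
the last equality by the vanishing above. (The same ingredients, together with the natural splitting of the adjunction counit on $\cT_\uA$ and the triangle identity, show that this isomorphism is in fact induced by $\rho$ itself, but only the dimension equality is needed.)

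Next I would identify the left-hand side with $\underline{B}$: since $\tensor[_B]{T}{_A}$ is a two-sided tilting complex (\Cref{lem:t_tilting}), the functor $-\ltensor_B\tensor[_B]{T}{_A}\colon\pD(B^{\op})\to\pD(A^{\op})$ is an equivalence, so $\End_{\pD(A^{\op})}(\rho\,\cT_\uA)\cong\End_{\pD(B^{\op})}(\tensor{\underline{B}}{_B})\cong\End_{B^{\op}}(\tensor{\underline{B}}{_B})$, and the last algebra is $\underline{B}$ acting by left multiplication, because $\underline{B}$ is the quotient of $B$ by the two-sided ideal of morphisms factoring through a projective. Combining with the previous paragraph, $\dim_k\End_{\pD(\underline{A}^{\op})}(\cT_\uA)=\dim_k\underline{B}$. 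Finally, the canonical map $\underline{B}\to\End_{\pD(\underline{A}^{\op})}(\cT_\uA)$ is left multiplication on $\cT_\uA$; since the isomorphism $\rho\,\cT_\uA\cong\tensor{\underline{B}}{_B}\ltensor_B\tensor[_B]{T}{_A}$ of \Cref{thm:cTT-iso} is $\underline{B}$-linear on the left, and since $-\ltensor_B\tensor[_B]{T}{_A}$ carries left multiplication on $\tensor{\underline{B}}{_B}$ to left multiplication on $\tensor{\underline{B}}{_B}\ltensor_B\tensor[_B]{T}{_A}$, the composite $\underline{B}\to\End_{\pD(\underline{A}^{\op})}(\cT_\uA)\to\End_{\pD(A^{\op})}(\rho\,\cT_\uA)\cong\underline{B}$ is the identity. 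Hence the canonical map is injective, and therefore — by the dimension count — an isomorphism.

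The step I expect to be the main obstacle is the second paragraph: controlling $\rho(\cT_\uA)\ltensor_A\underline{A}$ and extracting that it splits as $\cT_\uA\oplus\cT_\uA[3]$ with the extra summand concentrated in homological degrees disjoint from those of $\cT_\uA$. This is exactly the $\Tor_2$-vanishing of \Cref{lem:tor2_is_0}, packaged through \Cref{cor:oneside_theorem}, but one must keep all the bimodule structures visible — the isomorphisms of \Cref{thm:cTT-iso} being bimodule isomorphisms is what makes the final identification of the canonical map in the third paragraph legitimate — and one must make sure the shift by $3$ is large enough compared to the two-degree width of $\cT_\uA$ for the relevant $\Hom$ group to vanish.
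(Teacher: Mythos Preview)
Your proposal is correct and follows essentially the same route as the paper: both use the adjunction between restriction along $A\to\underline{A}$ and $-\ltensor_A\underline{A}$, together with $\cP_A\otimes_A\underline{A}\cong P_\uA\oplus P_\uA[3]$ and the vanishing $\Hom(P_\uA[3],P_\uA)=0$, to identify $\End_{\pD(\underline{A}^{\op})}(\cT_\uA)$ with $\End_{\pD(A^{\op})}(\cT_A)\cong\End_{\pD(B^{\op})}(\underline{B}_B)\cong\underline{B}$, and then a finite-dimensionality argument. The only cosmetic difference is that the paper packages the compatibility with the canonical map as a commutative square (deducing that restriction is surjective, hence an isomorphism, hence the canonical map is one), whereas you argue directly that the composite $\underline{B}\to\End_{\pD(\underline{A}^{\op})}(\cT_\uA)\to\End_{\pD(A^{\op})}(\rho\,\cT_\uA)\cong\underline{B}$ is the identity; these are the same content.
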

\begin{proof}
    We will show the first isomorphism.
    The second one is done similarly in a symmetric fashion.
    From the isomorphism in \Cref{thm:cTT-iso} the following commutative diagram is produced:

    \begin{equation*}
    \begin{tikzcd}[column sep=4em]
        \End_{\pD(\underline B^{\op})} (\underline{B}_{\underline{B}})\rar["{-\ltensor_{\underline{B}} {}_\uB\underline{B}_B}"]\dar["{-\ltensor_{\underline{B}} {}_\uB\cT_{\underline{A}}}"']
        &\End_{\pD (B^{\op})}(\underline{B}_{B})\dar["{-\ltensor_{B} {}_{B}T_{A}}"]\\
        \End_{\pD(\underline A^{\op})} (\cT_{\underline A})
        \rar["{-\ltensor_{\underline{A}} {}_\uA\underline{A}_A}"]
        &\End_{\pD(A^{\op})}(\cT_{A}).
    \end{tikzcd}
    \end{equation*}

    The morphism induced by $ -\ltensor_{\underline{B}} \tensor[_\uB]{\underline{B}}{_B}$ is an isomorphism, and since ${}_B T_A$ is a tilting complex (by \Cref{lem:t_tilting}), the morphism induced by $-\ltensor_{B} {}_{B}T_{A}$ is also an isomorphism.
    Hence the map induced by $-\ltensor_{\underline{A}} \tensor[_\uA]{\underline{A}}{_A}$ is surjective.
    Thus to show $-\ltensor_{\underline{B}} {}_\uB\cT_{\underline{A}}$ is an isomorphism,
    it is enough to show that
         
    \[
        \End_{\pD \underline A^{\op}} (\cT_{\underline A})\cong
        \End_{\pD A^{\op}}(\cT_{A}),
    \]

    making $-\ltensor_{\underline{A}}\tensor[_\uA]{\underline{A}}{_A}$ an isomorphism.
    This is seen using the following calculation:

    \begin{align*}
        \RHom_{A^{\op}}(\cT_{A},\cT_{A})
        &\cong\RHom_{A^{\op}}\big(\cT_{\underline{A}} \ltensor_{\underline A}\tensor[_{\underline A}]{\underline{A}}{_{A}},\RHom_{\underline{A}^{\op}}(\tensor[_A]{\underline{A}}{_{\underline A}} ,\tensor{\cT}{_{\underline{A}}}\!)\big)\\
        &\cong\RHom_{\underline{A}^{\op}}\big((\tensor{\cT}{_{\underline{A}}}\! \ltensor_{\underline A}{}_{\underline A}\tensor{\underline{A}}{_{A}})\ltensor_A \tensor[_{A}]{\underline{A}}{_{\underline A}} ,\tensor{\cT}{_{\underline{A}}}\!\big)\\
        &\cong\RHom_{\underline A^{\op}}\big(\tensor{\underline{B}}{_{B}}\ltensor_{B} \tensor[_B]{T}{_A}\ltensor_A \tensor[_{ A}]{\underline{A}}{_{\underline A}} ,\tensor{\cT}{_{\underline{A}}}\!\big)&\text{(by \Cref{cor:oneside_theorem}\ref{cor:sim:2})}\\
        &\cong\RHom_{\underline A^{\op}}\big(\tensor{\cP}{_A}\tens_A\tensor[_{A}]{\underline{A}}{_{\underline A}} ,\tensor{\cT}{_{\underline{A}}}\!\big)&\hspace{-40pt}\text{(by \Cref{cor:b_tens_T_1} and \Cref{lem:projres_cP})}\\
        &\cong\RHom_{\underline A^{\op}}\big(P_\uA\oplus P_\uA[3] ,\cT_{\underline{A}}\big).&\text{(by \Cref{lem:wif}\ref{lem:item:wif3})}
    \end{align*}

    By taking the 0'th homology one obtains that
    \begin{align*}
        \End_{\pD(A^{\op})}(\cT_{A}) 
        &\cong \Hom_{\pD(\underline A^{\op})}\big(P_\uA\oplus P_\uA[3] ,\cT_{\underline{A}}\big)
        \cong \Hom_{\pD(\underline A^{\op})}\big(P_\uA ,\cT_{\underline{A}}\big)
        \cong \End_{\pD(\underline A^{\op})}\big(\cT_{\underline{A}} \big),
    \end{align*}

    where the second isomorphism comes from $\cT_{\underline{A}}$ being isomorphic in $\pD(\underline{A}^{\op})$ to a two term complex of projective objects, see  \Cref{cor:oneside_theorem}\ref{cor:sim:1}, and the last isomorphism is also by \Cref{cor:oneside_theorem}\ref{cor:sim:1}.\qedhere
\end{proof}

\begin{lemma}
    \label{lem:P_tilting}
The complex $P_\uA$,
\[
    \begin{tikzcd}[column sep=2em]
        \cC(l,l_1) \rar["{\phi_1}_*"] &
        \cC(l,l_0),
    \end{tikzcd}
\]
is a two-term tilting complex in $\pD(\underline{A}^{\op})$. 
\end{lemma}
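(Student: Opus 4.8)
The plan is to verify for $P_\uA$ the two conditions in the definition of a tilting complex — self-orthogonality, $\Hom_{\pD(\uA^{\op})}(P_\uA,P_\uA[i])=0$ for $i\neq 0$, and generation, $\thick(P_\uA)=\pK^b(\proj\uA^{\op})$ — together with the immediate observation that $P_\uA$ lies in $\pK^b(\proj\uA^{\op})$: since $l_0,l_1\in\add(l)$, each of $\cC(l,l_0)$ and $\cC(l,l_1)$ is a finitely generated projective right $\uA=\cC(l,l)$-module, so $P_\uA$ is a two-term complex of finitely generated projectives. Throughout I would transport information from $\cT_\uA$ via the isomorphism $P_\uA\cong\cT_\uA$ in $\pD(\uA^{\op})$ of \Cref{cor:oneside_theorem}\ref{cor:sim:1}.

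For self-orthogonality I would reuse the chain of isomorphisms assembled inside the proof of \Cref{thm:silting_alg}. Writing $\cT_A\coloneqq\cT_\uA\ltensor_\uA\uA$ for the image in $\pD(A^{\op})$, that computation yields $\RHom_{A^{\op}}(\cT_A,\cT_A)\cong\RHom_{\uA^{\op}}(P_\uA\oplus P_\uA[3],P_\uA)$. Putting $R\coloneqq\RHom_{\uA^{\op}}(P_\uA,P_\uA)$, this reads $\RHom_{A^{\op}}(\cT_A,\cT_A)\cong R\oplus R[-3]$, and since $P_\uA$ is a two-term complex the complex $R$ has homology only in degrees $-1,0,1$. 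On the other hand, by \Cref{cor:oneside_theorem}\ref{cor:sim:2} we have $\cT_A\cong\uB\ltensor_B T$, and since $T$ is a two-sided tilting complex of $B\otimes A^{\op}$-modules (\Cref{lem:t_tilting}) the functor $-\ltensor_B T\colon\pD(B^{\op})\to\pD(A^{\op})$ is a triangle equivalence, hence $\RHom_{A^{\op}}(\cT_A,\cT_A)\cong\RHom_{B^{\op}}(\uB,\uB)$. I would compute this last complex from the projective resolution $Q_B$ of \cref{eq:proj_res_uB}: the equivalence $\add(m)\simeq\proj B^{\op}$ given by $\cE(m,-)$ identifies $\Hom_{B^{\op}}(\cE(m,p),\uB)\cong\uB\tens_B\cE(p,m)\cong\cC(p,m)$ for $p\in\add(m)$ (the last isomorphism by \Cref{lem:wif}\ref{lem:item:wif3}), and this vanishes whenever $p$ is projective in $\cE$. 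Therefore $\Hom_{B^{\op}}(Q_B,\uB)$ has homology $\uB$ in degrees $0$ and $3$ and is zero otherwise, so $\RHom_{B^{\op}}(\uB,\uB)\cong\uB\oplus\uB[-3]$. Comparing $R\oplus R[-3]\cong\uB\oplus\uB[-3]$, and using that the supports $\{-1,0,1\}$ of $R$ and $\{2,3,4\}$ of $R[-3]$ are disjoint, forces $R\cong\uB$ concentrated in degree $0$; this is precisely the required vanishing.

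I expect generation, $\thick(P_\uA)=\pK^b(\proj\uA^{\op})$, to be the main obstacle, and I would treat it through the theory of two-term silting complexes. The previous step shows that $P_\uA$ is a two-term presilting complex over $\uA^{\op}$, and since we have in fact established the full vanishing — in particular $\Hom(P_\uA,P_\uA[-1])=0$ — it suffices to prove that $P_\uA$ is silting, equivalently that the number of pairwise non-isomorphic indecomposable direct summands of $P_\uA$ equals the number of simple $\uA$-modules. By \Cref{thm:silting_alg} we have $\End_{\pD(\uA^{\op})}(P_\uA)\cong\uB$, so that number is the number of simple $\uB$-modules, which by Krull--Schmidt is the number of non-projective indecomposable summands of $m$; dually the number of simple $\uA$-modules is the number of non-projective indecomposable summands of $l$. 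Running the construction of this section with the roles of $l$ and $m$ interchanged produces a two-term presilting complex $P'_\uB$ over $\uB^{\op}$ with $\End_{\pD(\uB^{\op})}(P'_\uB)\cong\uA$, hence with as many indecomposable summands as $l$ has non-projective indecomposable summands. Since a presilting complex over a finite-dimensional algebra has at most as many indecomposable summands as that algebra has simple modules, applying this to $P_\uA$ and to $P'_\uB$ gives that the number of non-projective indecomposable summands of $m$ is at most that of $l$, and the reverse inequality by symmetry. The two counts therefore agree, $P_\uA$ has exactly the number of indecomposable summands needed, and hence $P_\uA$ is a two-term tilting complex in $\pD(\uA^{\op})$.
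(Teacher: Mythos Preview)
Your argument is correct but takes a genuinely different route from the paper. The paper first invokes \cite[thm.~2.18 and rmk.~2.19(2)]{august2020finiteness} to conclude directly that $P_\uA$ is silting (this gives generation and the vanishing of $\Hom(P_\uA,P_\uA[i])$ for $i>0$), and then proves $\Hom(P_\uA,P_\uA[-1])=0$ by an elementary chain-map argument: using the triangle $l_1\to l_0\to m\to\Sigma l_1$ one sees that any such chain map factors through an element of $\Hom_{\uA}(\cC(l,m),\cC(l,\Sigma^{-1}m))$, and this Hom space vanishes by \Cref{lem:wif}\ref{lem:item:wif6} because $\cC(m,\Sigma^{-1}m)=0$ (rigidity of $m$ together with $\Sigma^2 m\cong m$). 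You instead obtain the full self-orthogonality in one stroke by transporting along the tilting equivalence $-\ltensor_B T$ and computing $\Ext^{*}_{B^{\op}}(\uB,\uB)$ from the resolution $Q_B$, and you then derive generation from two-term silting theory combined with the symmetry of the setup in $l$ and $m$, via a count of indecomposable summands. The paper's treatment of $\Hom(P_\uA,P_\uA[-1])=0$ is more self-contained and uses only the defining triangle of $P_\uA$; your approach trades the external citation for silting against the (equally external) Aihara--Iyama/Adachi--Iyama--Reiten fact that a two-term presilting complex with the maximal number of non-isomorphic indecomposable summands is silting, and makes essential use of the global $\RHom$ computation already set up for \Cref{thm:silting_alg}.
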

\begin{proof}
    $P_\uA$ is a silting complex by \cite[thm 2.18 and rmk 2.19(2)]{august2020finiteness}. 
    Thus to show that $P_\uA$ is a tilting complex
    it is enough to show that $\Hom_{\pK^{b}(\proj \underline{A}^{\op})}(P_\uA, P_\uA[-1])=0$.
    \medbreak

    Recall that $P_\uA$ is defined using the triangle

    \begin{equation*}
    \begin{tikzcd}
        l_1\rar["\phi_1"]&
        l_0\rar["\phi_0"]&
        m\rar["\Sigma\phi_2"]&
        \Sigma l_1.
    \end{tikzcd}
    \end{equation*}

    By applying $\cC(l, -)$ to this triangle we obtain an exact sequence
    \begin{equation*}
    \begin{tikzcd}
        0\rar&
    \cC(l,\Sigma^{-1}m)\rar["{\phi_2}_*"]&
        \cC(l,l_1)\rar["{\phi_1}_*"]&
        \cC(l,l_0)\rar["{\phi_0}_*"]&
        \cC(l,m)\rar&
        0.
    \end{tikzcd}
    \end{equation*}

    This shows that $\Ker({\phi_1}_*) = \cC(l,\Sigma^{-1}m)$, and 
    $\Coker({\phi_1}_*) = \cC(l,m)$. 
    Let a chain map $f\in\Hom_{\pK^{b}(\proj \underline{A}^{\op})}(P_\uA, P_\uA[-1])$ be given:
    \begin{equation*}
    \begin{tikzcd}
        \cC(l,l_1)\dar\ar[r,"{\phi_1}_*"] & \cC(l,l_0)\ar[d, "f_0"]\rar&0\dar\\
        0\rar&\cC(l,l_1) \rar["{\phi_1}_*"] & \cC(l,l_0).
    \end{tikzcd}
    \end{equation*}

    $f$ being a chain map implies that $f_0{\phi_1}_{*}=0$. Thus $f_0$ factors through $\Coker({\phi_1}_*)$, say $f_0 = f_0' {\phi_0}_{*}$.
    If we use that $f$ is a chain map again we get that ${\phi_1}_{*}f_0'$=0, giving that $f_0'$ factors through $\Ker({\phi_1}_*)$, say $f_0' = {\phi_2}_{*}f_0''$, with $f_0''\in \Hom_{\underline{A}}(\cC(l,m), \cC(l,\Sigma^{-1} m))$.
    Therefore $f_0 = {\phi_2}_{*}f_0'' {\phi_0}_*$:
    \begin{equation*}
    \begin{tikzcd}
        \cC(l,l_1)\ar[r,"{\phi_1}_*"] & \cC(l,l_0)\ar[d, "f_0"{name=Z}, crossing over]\rar[twoheadrightarrow, "{\phi_0}_*"]&\cC(l,m)
     \arrow[dll, "f_0''", rounded corners,
    to path={ -- ([xshift=2ex]\tikztostart.east)
    |- ([yshift=-1ex]Z) [yshift=0.2ex]\tikztonodes
    -| ([xshift=-2ex]\tikztotarget.west) -- (\tikztotarget)}] \\
        \cC(l,\Sigma^{-1}m)\rar[rightarrowtail,"{\phi_2}_*"]&\cC(l,l_1) \rar["{\phi_1}_*"] & \cC(l,l_0).
    \end{tikzcd}
    \end{equation*}

    Since $m$ is rigid and $\Sigma^2 m \cong m$ we have that $\cC(m, \Sigma^{-1}m)=0$.
    Thus \Cref{lem:wif}\ref{lem:item:wif6} gives that $f_0''=0$, and thereby $f_0=0$.
    Hence $f = 0$, making $P_\uA$ a tilting complex.
\end{proof}

\begin{corollary}
    \label{cor:Ptilting}
    ${}_\uB\cT_\uA$ is a two-sided tilting complex. In particular 
   \[
       -\ltensor_{\underline{B}} {}_\uB\cT_\uA: \pD(\underline{B}^{\op})\longrightarrow \pD(\underline{A}^{\op})
   \]
    is a triangulated equivalence, making $\underline{A}$ and $\underline{B}$ derived equivalent.
\end{corollary}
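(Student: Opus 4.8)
The plan is to verify the three conditions in \Cref{def:tilting-complex} for ${}_{\underline B}\cT_{\underline A}$, viewed as a complex of $\underline B \otimes \underline A^{\op}$-modules, with $\underline B$ playing the role of $A$ and $\underline A^{\op}$ the role of $B^{\op}$ in that definition. Most of the hard work has already been done: \Cref{thm:silting_alg} gives that the canonical map $\underline B \to \End_{\pD(\underline A^{\op})}({}_{\underline B}\cT_{\underline A})$ is an isomorphism, and \Cref{cor:oneside_theorem}\ref{cor:sim:1} identifies ${}_{\underline B}\cT_{\underline A}$ with $P_{\underline A}$ in $\pD(\underline A^{\op})$, which by \Cref{lem:P_tilting} is a two-term tilting complex in $\pD(\underline A^{\op})$.

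Concretely, I would argue as follows. For condition (1) of \Cref{def:tilting-complex}: by \Cref{cor:oneside_theorem}\ref{cor:sim:1} we have $\cT_{\underline A} \cong P_{\underline A}$ in $\pD(\underline A^{\op})$, and by \Cref{lem:P_tilting} the complex $P_{\underline A}$ is a tilting complex, so in particular $\Hom_{\pD(\underline A^{\op})}(\cT_{\underline A}, \cT_{\underline A}[i]) = 0$ for $i \neq 0$. The bijectivity of the canonical map $\underline B \to \Hom_{\pD(\underline A^{\op})}(\cT_{\underline A}, \cT_{\underline A})$ is exactly the first isomorphism of \Cref{thm:silting_alg}. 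For condition (2): again $\cT_{\underline A} \cong P_{\underline A}$, and $P_{\underline A}$ is by its very definition a two-term complex $\cC(l,l_1) \to \cC(l,l_0)$ of finitely generated projective $\underline A^{\op}$-modules (since $l_i \in \add(l)$, each $\cC(l,l_i)$ is a projective right $\underline A$-module), hence $\cT_{\underline A}$ lies in $\pK^b(\proj \underline A^{\op})$. For condition (3): $\thick(\cT_{\underline A}) = \thick(P_{\underline A}) = \pK^b(\proj \underline A^{\op})$, which is part of the assertion that $P_{\underline A}$ is a tilting complex in \Cref{lem:P_tilting}. This establishes that ${}_{\underline B}\cT_{\underline A}$ is a two-sided tilting complex.

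For the final clause, that $-\ltensor_{\underline B} {}_{\underline B}\cT_{\underline A}$ is a triangulated equivalence $\pD(\underline B^{\op}) \to \pD(\underline A^{\op})$ making $\underline A$ and $\underline B$ derived equivalent: this is the standard consequence of Rickard--Keller theory once one has a two-sided tilting complex. Since ${}_{\underline B}\cT_{\underline A}$ is a two-sided tilting complex, the functor $-\ltensor_{\underline B} {}_{\underline B}\cT_{\underline A}$ is an equivalence of derived categories by \cite[8.1.4]{Keller1998} (or \cite{rickard1991derived}); equivalently, the underlying one-sided complex $\cT_{\underline A}$ being a tilting complex in $\pK^b(\proj \underline A^{\op})$ whose endomorphism ring is $\underline B$ gives a derived equivalence between $\underline B$ and $\underline A$ by Rickard's theorem \cite{rickard1989morita}.

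The only real subtlety — and the step I would be most careful about — is bookkeeping of sides: making sure the roles of $A$ and $B$ in \Cref{def:tilting-complex} are matched correctly to $\underline B$ and $\underline A^{\op}$ here, and that \Cref{thm:silting_alg}, \Cref{cor:oneside_theorem}, and \Cref{lem:P_tilting} are invoked on the correct one-sided versions. The symmetric statements ($\underline A \to \End_{\pD(\underline B)}({}_{\underline B}\cT_{\underline A})$ in \Cref{thm:silting_alg}, and the analogue of \Cref{lem:P_tilting} on the other side) guarantee that the two-sidedness is genuine and not an artifact of choosing one orientation, so no further work beyond citing the established lemmas is needed.
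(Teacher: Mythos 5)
Your proposal is correct and follows essentially the same route as the paper: identify $\cT_{\underline A}$ with $P_{\underline A}$ via \Cref{cor:oneside_theorem}, invoke \Cref{lem:P_tilting} for the one-sided tilting conditions, \Cref{thm:silting_alg} for the endomorphism-ring condition, and Keller's result for the resulting equivalence. The paper's proof is just a more terse version of the same argument, including the remark that the symmetric statement handles the other side.
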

\begin{proof}
    Recall that $\cT_\uA\cong P_\uA$ in $\pD(\underline{A}^{\op})$ (by \Cref{cor:oneside_theorem}\ref{cor:sim:1}), and that $P_\uA$ is a tilting complex (by \Cref{lem:P_tilting}). Symmetrically ${}_{\underline{B}}\cT$ is also a tilting complex.
    This establishes the second half of part (1) as well as parts (2) and (3) of \Cref{def:tilting-complex}.
    The first half of part (1) holds by \Cref{thm:silting_alg}.
    The triangulated equivalence follows from \cite[prop. 8.1.4]{Keller1998}.
\end{proof}

\section{Examples}
\subsection{Cluster-tilting objects from \texorpdfstring{$\pazocal{C}(D_{2n})$}{C(D2n)}}\leavevmode
\medbreak

    It was shown in \cite[lem. 4.5]{bastian2014towards} that the self-injective cluster tilted algebras of the cluster category $\cC=\pazocal{C}(D_{2n})$ are derived equivalent.
    This was done by finding a tilting complex ad hoc. 
    In this example we will see how our results can be used to find such a tilting complex, and thereby recover the tilting complex from \cite{bastian2014towards}.
    \medbreak

    To apply the results from the previous section on this example,
    we need to ensure that $\cC$ has a Frobenius model $\cE$ that satisfies \Cref{set:EC}

    \begin{theorem}
        There exists a Frobenius category $\cE$, such that $\cC = \underline{\cE}$.
        Furthermore the pair of $\cE$ and $\cC$ satisfy \Cref{set:EC}.
    \end{theorem}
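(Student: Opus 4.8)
The goal is to exhibit a Frobenius category $\cE$ whose stable category is the cluster category $\cC = \pC(D_{2n})$ and which satisfies the standing hypotheses of \Cref{set:EC}: namely that $\cE$ is $k$-linear Frobenius with projective objects $\add(r)$ for a single object $r$, and that $\cC$ is $2$-Calabi--Yau, Hom-finite, and has split idempotents. The standard tool here is that cluster categories of Dynkin (or more generally acyclic) type admit Frobenius models. Concretely, I would invoke the work of Geiss--Leclerc--Schröer and of Keller--Reiten: the cluster category $\pC(Q)$ of an acyclic quiver $Q$ arises as the stable category $\underline{\operatorname{Sub}}(\Lambda/\Lambda e)$ (equivalently a stable category of maximal Cohen--Macaulay modules over a suitable Gorenstein order, or a Frobenius category of modules over the preprojective algebra), and this ambient category is Frobenius with a projective--injective generator. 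For type $D_{2n}$ one takes $Q$ to be a Dynkin quiver of type $D_{2n}$; then the preprojective algebra $\Pi$ of type $D_{2n}$ is finite-dimensional self-injective, and the relevant Frobenius category $\cE \coloneqq \operatorname{Sub}(\Pi/\Pi e)$ (for an appropriate idempotent $e$, or more simply $\cE = \operatorname{mod}\Pi$ in the finite type case) has $\underline{\cE} \simeq \pC(D_{2n})$ by Geiss--Leclerc--Schröer.

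The steps, in order, would be: (1) Fix the Dynkin quiver $Q$ of type $D_{2n}$ and its preprojective algebra $\Pi$; recall that $\Pi$ is a finite-dimensional self-injective $k$-algebra (Dynkin type), so $\cE \coloneqq \operatorname{mod}\Pi$ is a Frobenius category in which projective $=$ injective, and the projective--injective objects are exactly $\add(\Pi)$, so we may take $r = \Pi$ as the required generator. (2) Identify the stable category: by the theorem of Geiss--Leclerc--Schröer (respectively Keller--Reiten) one has a triangle equivalence $\underline{\operatorname{mod}}\,\Pi \simeq \pC(D_{2n})$ --- more precisely one may need to pass to the subcategory $\operatorname{Sub}\,Q_j$ or use the $2$-CY tilted description, so I would cite the precise statement that realizes $\pC(D_{2n})$ as such a stable category. (3) Verify the remaining properties of $\cC$: Hom-finiteness is immediate since $\Pi$ is finite-dimensional; the $2$-Calabi--Yau property of $\underline{\operatorname{mod}}\,\Pi$ for $\Pi$ preprojective of Dynkin type is classical (this is one of the motivating examples of $2$-CY categories, due to Geiss--Leclerc--Schröer / Iyama); split idempotents hold because $\operatorname{mod}\Pi$ is a Krull--Schmidt category and passing to the stable category of a Krull--Schmidt Frobenius category preserves this, or alternatively $\cC$ is Hom-finite over a field hence idempotent-complete.

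**The main obstacle.**
The real subtlety is step (2): getting a Frobenius model whose stable category is \emph{exactly} the cluster category $\pC(D_{2n})$ with its standard triangulated structure, rather than merely a $2$-CY category with the same number of indecomposables. The cleanest route is to quote the Geiss--Leclerc--Schröer realization of $\pC(Q)$ as a stable module category over a preprojective-type algebra (or the Amiot/Keller--Reiten realization via a Frobenius category of modules over a completed Ginzburg dg-algebra, truncated suitably in the Dynkin case); I expect the proof to consist mostly of checking which published construction applies verbatim and then reciting that $\Pi$ being self-injective of Dynkin type supplies all of $k$-linearity, Hom-finiteness, the projective generator $r$, the $2$-CY property, and Krull--Schmidt/idempotent-splitting. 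No genuinely new argument is needed --- the content is assembling the correct citations --- but one must be careful that the chosen model really has projective objects of the form $\add(r)$ for a \emph{single} $r$ (which is why the self-injective preprojective algebra of finite type, with $r = \Pi$, is the right choice, as opposed to a model where the projective--injectives form a larger additive category not generated by one object).
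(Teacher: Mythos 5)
Your overall strategy --- realizing $\pC(D_{2n})$ as the stable category of a Frobenius category built over the preprojective algebra --- is the right one and is essentially what the paper does, but your primary concrete candidate fails. For $\Pi$ the preprojective algebra of type $D_{2n}$, the stable category $\underline{\mod}\,\Pi$ is \emph{not} triangle equivalent to the cluster category $\pC(D_{2n})$: $\Pi(D_{2n})$ is representation-infinite for every $n\geq 2$ (preprojective algebras of Dynkin type are representation-finite only in types $A_1,\dots,A_4$), so $\underline{\mod}\,\Pi$ has infinitely many indecomposables, whereas $\pC(D_{2n})$, being an orbit category of $\D^b(\mod kD_{2n})$, has only finitely many. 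Thus $\mod\Pi$ is a Frobenius model of \emph{some} $2$-CY category, just not of the one required here. Your fallback (``pass to $\Sub(\Pi/\Pi e)$, or cite the precise statement that realizes $\pC(D_{2n})$ as such a stable category'') correctly locates where the real work lies, but leaves exactly that step unresolved --- and that identification is the entire content of the theorem.

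The paper closes this gap by invoking the terminal-module construction of Geiss--Leclerc--Schr\"oer \cite[thm.~2.1]{geiss2007cluster}: taking $M=I\oplus\tau I$, where $I$ is the direct sum of the indecomposable injective $kD_{2n}$-modules, one obtains a Frobenius \emph{subcategory} $\cE=\cC_M$ of the module category of the preprojective algebra (not the whole module category) whose stable category is triangle equivalent to $\pC(D_{2n})$, whose projective-injective objects are $\add(r)$ for a single object $r$, and for which the remaining conditions of \Cref{set:EC} ($2$-CY, Hom-finiteness, split idempotents) come packaged with that theorem. So the correct fix is not a different algebra but a carefully chosen subcategory over the same algebra, selected by the terminal module $M$; without pinning down this specific citation (or an equivalent one, e.g.\ via Keller--Reiten's recognition theorem, which would require separately verifying its hypotheses), the proof is incomplete.
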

    \begin{proof}
        Let $I$ be the direct sum of all injective indecomposable objects in $\mod(kD_{2n})$,
        and $M = I \oplus \tau I$.
        This $M$ has the needed properties to apply \cite[thm. 2.1]{geiss2007cluster},
        giving a Frobenius category $\cE$ such that $\cC= \underline{\cE}$, which also satisfies \Cref{set:EC}.
    \end{proof}

    Let $n\in\NN$, with $n\geq 4$. Consider the quiver $D_{2n}$.

    \begin{equation*}
    \begin{tikzcd}
        &&&&a\ar[dl]\\
        c\rar&
        \bullet\rar&
        \cdots\rar&
        \bullet&\\
        &&&&b\ar[ul]
    \end{tikzcd}
    \end{equation*}
    
    There are indecomposable projective representations $P(a), P(b), P(c)$ corresponding to the vertices $a,b,c$,
    and these can be viewed as objects of $\cC$.
    In \cite{ringel2008self}, Ringel described two cluster-tilting objects in $\cC$, see \Cref{fig:AR_d8_2}:

    \begin{equation*}
        T_1 = 
        \left(\bigoplus^{n-1}_{i=0} \tau^{-2i}P(a) \right) \oplus
        \left(\bigoplus^{n-1}_{i=0} \tau^{-2i-1}P(b) \right) 
    \end{equation*}
    \vspace{.5em}
    \begin{equation*}
        T_2 = 
        \left(\bigoplus^{n-1}_{i=0} \tau^{-2i}P(a) \right) \oplus
        \left(\bigoplus^{n-1}_{i=0} \tau^{-2i-1}P(c) \right).
    \end{equation*}
    \vspace{0.1em}

    Denote their endomorphism algebras by $A_i = \End_\cC(T_i) = \cC(T_i,T_i)$.
    To describe the endomorphism algebras we define the following quivers:

\begin{minipage}{0.45\textwidth}
\begin{center}
\begin{tikzpicture}
    \def\n {4}
    \def\rad{1.3}
    \def\soffset{15}
    \def\toffset{15}
    \node at (180:2.6) {$Q_1:$};
    \foreach \i in {1,...,\n}{
        \ifthenelse{\i=1}{\node[yshift=-.1em]}{\node} at ({(\i*360)/\n}:\rad) {\ifthenelse{\i=1}{$\cdots$}{
                \ifthenelse{\i=2}{$2$}{ 
                    \ifthenelse{\i=3}{$1$}{
                        \ifthenelse{\i=4}{$2n$}{
                        }
                    }
                }
            }};
        \node at ({(\i*360)/\n+(360)/(2*\n)}:{\rad+.3}) {$\alpha$};
        \def\soffsett{\toffset}
        \def\toffsett{\soffset}
        \ifthenelse{\i=1}{\def\soffset{22}}{}
        \ifthenelse{\i=\n}{\def\toffset{19}}{}
        \draw[->] ({(\i*360)/\n + \soffset}:\rad) 
            arc ({(\i*360)/\n+\soffset}:{((\i+1)*360)/\n-\toffset}:\rad);
        \def\toffset{\toffsett}
        \def\soffset{\soffsett}
    }
\end{tikzpicture}
\end{center}
\end{minipage}
\begin{minipage}{0.45\textwidth}
\begin{center}
\begin{tikzpicture}
   \def\n{5} 

   \def\rot{-360/10} 
   \def\innerRad{1.3} 
   \def\outerRad{2.3} 
   \def\bend{15} 
   \def\offset{.8em}
   \def\missing{3}

   \node at (180:3.4) {$Q_2:$};
   \foreach \i in {1,...,\n}{
        \node at ({(\i-2)*360/\n+360/(4*\n)+\rot}:\innerRad) {\ifthenelse{\i=\n}{$n$}{\ifthenelse{\i=4}{$n-1$}{$\i$}}};
        \ifthenelse{\i=\missing}{}{
            \node at ({(\i-2)*360/\n+(360/(\n*2))+360/(4*\n) +\rot}:\outerRad) {$\bullet$};
            \draw[shorten >=\offset,shorten <=\offset, ->] ({(\i-2)*360/\n+360/(4*\n) + \rot}:\innerRad) arc ({(\i-2)*360/\n+360/(4*\n)+\rot}:{(\i-1)*360/\n+360/(4*\n)+\rot}:\innerRad);
            \node[text width=1em] at ({(\i-2)*360/\n+360/(4*\n)+360/(2*\n)+\rot}:1.1) {$\alpha$};
            \node[text width=1em] at ({(\i-2)*360/\n+2*360/(4*\n)+\rot}:{(\innerRad+\outerRad)/2+.3}) {$\beta$};
            \node[text width=1em] at ({(\i-2)*360/\n+4*360/(4*\n)+\rot+3}:{(\innerRad+\outerRad)/2+.2}) {$\beta$};
            \ifthenelse{\i=4}{
            \draw[shorten <=\offset, ->] ({(\i-2)*360/\n+360/(\n*2)+360/(4*\n)+\rot}:\outerRad) to [bend left=\bend] ({(\i-2)*360/\n+360/(4*\n)+\rot+15}:{\innerRad+0.38});
            }{
            \draw[shorten >=\offset,shorten <=\offset, ->] ({(\i-2)*360/\n+360/(\n*2)+360/(4*\n)+\rot}:\outerRad) to [bend left=\bend] ({(\i-2)*360/\n+360/(4*\n)+\rot}:\innerRad);
            }
            \draw[shorten >=\offset,shorten <=\offset, <-] ({(\i-2)*360/\n+360/(\n*2)+360/(4*\n)+\rot}:\outerRad) to [bend right=\bend] ({(\i-1)*360/\n+360/(4*\n)+\rot}:\innerRad);
        }
   }
   \node at ({(\missing-2)*360/\n+(360/(\n*2))+360/(4*\n) +\rot}:{(\innerRad)}) {$\cdots$};
\end{tikzpicture}
\end{center}
\end{minipage}
 
Then $A_i = kQ_i/I_i$, with $I_1 = \langle\alpha^{2n-1}\rangle$, and $I_2 = \langle \alpha\beta,\beta\alpha,\beta^2-\alpha^{n-1} \rangle$.
Both of these algebras are self-injective by \Cref{lem:hom_sym}, but neither of them is symmetric. 
Now \Cref{cor:Ptilting} says that $A_1$ and $A_2$ are derived equivalent.
Furthermore, there is a direct way to calculate the associated one-sided tilting complexes.

\bigbreak

\begin{figure}[ht]
\begin{equation*}
\begin{tikzcd}[column sep=.8em,row sep=.8em, nodes in empty cells]
    &\bullet\ar[dr]& &\clap{{\tiny $\tau^{-1}P(a)$}}\ar[dr]& &\bullet\ar[dr]& &\bullet\ar[dr]& &\bullet\ar[dr]& &\bullet\ar[dr]& &\bullet& &&\\
    \bullet\ar[dr]\ar[ur]\ar[r]&\smash{\hspace{0.5em}\clap{{\tiny $P(b)$}}}\phantom{\bullet}\ar[r]& \bullet\ar[dr]\ar[ur]\ar[r]&\bullet\ar[r]& \bullet\ar[dr]\ar[ur]\ar[r]&\bullet\ar[r]& \bullet\ar[dr]\ar[ur]\ar[r]&\bullet\ar[r]& \bullet\ar[dr]\ar[ur]\ar[r]&\bullet\ar[r]& \bullet\ar[dr]\ar[ur]\ar[r]&\bullet\ar[r]& \bullet\ar[dr]\ar[ur]\ar[r]&\bullet&&&\\
     &\bullet\ar[ur]\ar[dr]& &\bullet\ar[ur]\ar[dr]& &\bullet\ar[ur]\ar[dr]& &\bullet\ar[ur]\ar[dr]& &\bullet\ar[ur]\ar[dr]& &\bullet\ar[ur]\ar[dr]& &\bullet\ar[dr]& &&\\
    && \bullet\ar[ur]\ar[dr]&& \bullet\ar[ur]\ar[dr]&& \bullet\ar[ur]\ar[dr]&& \bullet\ar[ur]\ar[dr]&& \bullet\ar[ur]\ar[dr]&& \bullet\ar[ur]\ar[dr]&& \bullet\ar[dr]&&\\\
    && &\bullet\ar[ur]\ar[dr]& &\bullet\ar[ur]\ar[dr]& &\bullet\ar[ur]\ar[dr]& &\bullet\ar[ur]\ar[dr]& &\bullet\ar[ur]\ar[dr]& &\bullet\ar[ur]\ar[dr]& &\bullet\ar[dr]&\\
    &&&& \bullet\ar[ur]\ar[dr]&& \bullet\ar[ur]\ar[dr]&& \bullet\ar[ur]\ar[dr]&& \bullet\ar[ur]\ar[dr]&& \bullet\ar[ur]\ar[dr]&& \bullet\ar[ur]\ar[dr]&&\bullet\ar[dr]\\
    &&&& &\bullet\ar[ur]& &\bullet\ar[ur]& &\clap{{\tiny $\tau^{-2}P(c)$}}\ar[ur]& &\bullet\ar[ur]& &\bullet\ar[ur]& &\bullet\ar[ur]&&\bullet
\end{tikzcd}
\end{equation*}
    \caption{Auslander-Reiten quiver of $\mod(kD_{8})$.}%
    \label{fig:AR_d8_2}
\end{figure}
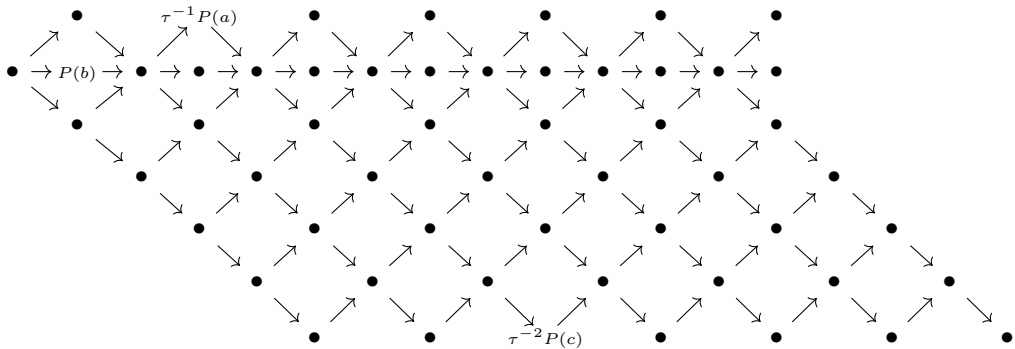

Using the Auslander-Reiten quiver of $\mod(kD_{2n})$ (see \Cref{fig:AR_d8_2} for an example in the case $n=4$),
one can use the dimension vectors to see that there is an exact sequence (see \cite[prop.~IX.3.1, lem.~IX.1.1(a)]{assem2006elements})

\begin{equation*}
\begin{tikzcd}
    0\rar&
    P(b)\rar["\phi"]&
    \tau^{-1}P(a)\rar["\psi"]&
    \tau^{-2}P(c)\rar&
    0
\end{tikzcd}
\end{equation*}

inducing a triangle
\begin{equation*}
\begin{tikzcd}
    P(b)\rar["\phi"]&
    \tau^{-1}P(a)\rar["\psi"]&
    \tau^{-2}P(c)\rar&
    \Sigma P(b)
\end{tikzcd}
\end{equation*}

in the cluster category $\cC$.This implies that for each $i$ there is a triangle

\begin{equation*}
\begin{tikzcd}
    \tau^{i}P(b)\rar["\phi_i"]&
    \tau^{i-1}P(a)\rar["\psi_{i-1}"]&
    \tau^{i-2}P(c)\rar&
    \Sigma\tau^{i} P(b)
\end{tikzcd}
\end{equation*}

in $\cC$.
With this we construct the following triangle.
Let $\Phi' = \oplus_{i=0}^{n-1} \phi_{-2i-1}$ and let $\Psi' = \oplus_{i=1}^{n} \psi_{-2i}$,
then there exists a triangle

\begin{equation*}
    \begin{tikzcd}[ampersand replacement=\&, column sep=3.8em]
        \left(\bigoplus^{n-1}_{i=0} \tau^{-2i-1}P(b) \right)
        \rar["{\begin{pmatrix} 0\\\Phi'\end{pmatrix} }"]\&
    \begin{matrix}
     \left(\bigoplus^{n}_{i=1} \tau^{-2i}P(a) \right) \\\oplus\\
     \left(\bigoplus^{n}_{i=1} \tau^{-2i}P(a) \right) 
    \end{matrix}
        \rar["{\begin{pmatrix} \id&0\\0&\Psi' \end{pmatrix} }"]\&
    \begin{matrix}
     \left(\bigoplus^{n}_{i=1} \tau^{-2i}P(a) \right)\\\oplus\\
     \left(\bigoplus^{n}_{i=1} \tau^{-2i-1}P(c) \right)
 \end{matrix}\rar\&\hbox{}\\
        T_1^2\rar["\Phi"]\uar[equal]\&
        T_1^1\rar["\Psi"]\uar[equal]\&
        T_2\uar[equal]\rar\&\hbox{}
\end{tikzcd}
\end{equation*}

with $T_1^j\in \add(T_1)$. Notice that we have used that $\tau^{2n}\cong \id$ when describing $T_2$.
It follows from $T_1$ being rigid that $\Psi$ is an $\add(T_1)$ pre-cover, namely if there is a morphism $\tilde\Psi:S\rightarrow T_2$, with $S\in \add(T_1)$ then $\cC(S,\Sigma T_1^2) = 0$, and $\tilde\Psi$ will therefore factor through $\Psi$. 
To see that $\Psi$ is a cover, it is enough to check that $\Phi\in\rad_\cC$ (see \cite[lem 3.12]{fedele2019}).
All the components $\phi_{-2i-1}$ of $\Phi$ are morphisms between two different indecomposable objects, and are therefore all in the radical.
Thus $\Phi$ is in the radical.

\bigbreak
Define the following complex concentrated in degrees 0,1:
\begin{equation*}
    P_1: 
\begin{tikzcd}
    \cC(T_1,T_1^2) \rar["\Phi_*"]&
    \cC(T_1,T_1^1).
\end{tikzcd}
\end{equation*}

This complex is a tilting complex over $A_1$ by \Cref{lem:P_tilting}.
By \Cref{thm:silting_alg}, there is an isomorphism $A_2\cong \End_{\pD(A_1^{\op})}(P_1)$.
Similarly a tilting complex $P_2$ could be found such that $A_1\cong \End_{\pD(A_2^{\op})}(P_2)$.

\bigbreak
To verify that this is indeed the case we check the isomorphism $A_2\cong \End_{\pD(A_1^{\op})}(P_1)$. 
For each $T'\in\add(T_1)$, the right $A_1$-module $\cC(T_1, T')$ is projective. We can therefore identify the Hom-space from $T_1$ to indecomposable summands of $T_1$ with indecomposable projective modules over $A_1$.
We will do that as follows:
\begin{equation*}
    P_{A_1}(2i) \cong \cC(T_1, \tau^{-2i}P(a))
    \qquad\text{and}\qquad 
    P_{A_1}(2j+1) \cong \cC(T_1, \tau^{-(2j+1)}P(b))
\end{equation*}

for $0< i\leq n$, and $0\leq j < n$. The morphisms between projective objects can be described as follows:

\begin{equation*}
    \dim \Hom(P_{A_1}(i), P_{A_1}(j)) = \begin{cases}
        0&\text{ if } j = i - 1\\
        0&\text{ if } i = 1,\ j=2n\\
        1&\text{ otherwise.}\\
    \end{cases}
\end{equation*}

Now $\End_{\pD(A_1^{\op})}(P_1)$ can be calculated. There are $2n$ indecomposable components in $P_1$:
\begin{alignat*}{2}
    B_i:\quad& P_{A_1}(2i+1) \,&&\xrightarrow{\phi_{-2i-1*}}\,P_{A_1}(2i+2),\\ 
    C_j:\quad& \phantom{\tau^{-2i-1}P(,}0\,&&\xrightarrow{\phantom{\phi_{-2i-1*}}}\,P_{A_1}(2j),
\end{alignat*}

with $0\leq i < n$, and $0 < j \leq n$. To describe the morphisms we split it into cases.

\begin{enumerate}[label=({\roman*})]
    \item $\gamma\in\Hom(C_i,B_{i-1})\neq 0$, for $0<i \leq n$. 
        Here $\gamma$ is the inclusion.
    \item $\beta\in\Hom(B_i,C_{i})\neq 0$, for $0 < i \leq n-1$ and $\beta\in\Hom(B_0,C_{n})\neq 0$.
        Here $\beta$ is induced by the morphism $P(a)\rightarrow \tau^{2}P(a)$.
    \item $\alpha\in\Hom(C_i,C_{i+1})\neq 0$, for all $0< i \leq n-1$, and $\alpha\in\Hom(C_{n},C_{1})\neq 0$.
        Here $\alpha$ is induced by the morphism $P(a)\rightarrow \tau^{-2}P(a)$.
    \item $\Hom(B_i,B_{i+1}) = 0$, for all $0\leq i < n-1$, and $\Hom(B_{n-1},B_{0}) = 0$.
        This is due to the ‘potential’ morphisms being null-homotopic.
    \item \label{ex:rel:alphabeta}
        $\delta\in\Hom(C_i,C_{i-1})\neq 0$, for all $1< i \leq n$, and $\delta\in\Hom(C_{1},C_{n})\neq 0$.
        But notice that $\delta$ factors through $B_{i-1}$:

        \begin{equation*}
        \begin{tikzcd}
            C_i\dar["\gamma"]&&0 \rar\dar&P_{A_1}(2i)\dar\\
            B_{i-1}\dar["\beta"]&:&P_{A_1}(2i-1) \rar\dar& P_{A_1}(2i)\dar\\
            C_{i-1}&&0 \rar&P_{A_1}(2i-2).
        \end{tikzcd}
        \end{equation*}
\end{enumerate}

From this we can determine the quiver of the endomorphism algebra $\End_{\pD(A_1^{\op})}(P_1)$:
\begin{center}
\begin{tikzpicture}
   \def\n{4} 

   \def\rot{360/15} 
   \def\innerRad{1.3} 
   \def\outerRad{2.3} 
   \def\bend{15} 
   \def\offset{.8em}
   \def\missing{2}

   \node at (180:3.4) {$Q:$};
   \foreach \i in {0,...,\n}{
        \node at ({(\i-2)*360/\n+360/(4*\n)+\rot}:\innerRad) {\ifthenelse{\i=3}{$C_{n-1}$}{\ifthenelse{\i=4}{}{$C_{\i}$}}};
        \ifthenelse{\i=\missing}{}{

            \node at ({(\i-2)*360/\n+(360/(\n*2))+360/(4*\n) +\rot}:\outerRad) {\ifthenelse{\i=3}{$B_{n-1}$}{\ifthenelse{\i=0}{$B_{0}$}{\ifthenelse{\i=1}{$B_1$}{}}}};

            \draw[shorten >=\offset,shorten <=\offset, ->] ({(\i-2)*360/\n+360/(4*\n) + \rot}:\innerRad) arc ({(\i-2)*360/\n+360/(4*\n)+\rot}:{(\i-1)*360/\n+360/(4*\n)+\rot}:\innerRad);
            \node[text width=1em] at ({(\i-2)*360/\n+360/(4*\n)+360/(2*\n)+\rot}:1.1) {$\alpha$};
            \node[text width=1em] at ({(\i-2)*360/\n+2*360/(4*\n)+\rot}:{(\innerRad+\outerRad)/2+.3}) {$\beta$};
            \node[text width=1em] at ({(\i-2)*360/\n+4*360/(4*\n)+\rot+3}:{(\innerRad+\outerRad)/2+.2}) {$\gamma$};
            \ifthenelse{\i=3}{
                \draw[shorten >=1.5em,shorten <=\offset, ->] ({(\i-2)*360/\n+360/(\n*2)+360/(4*\n)+\rot}:\outerRad) to [bend left=\bend] ({(\i-2)*360/\n+360/(4*\n)+\rot}:\innerRad);
            }{
                \draw[shorten >=\offset,shorten <=\offset, ->] ({(\i-2)*360/\n+360/(\n*2)+360/(4*\n)+\rot}:\outerRad) to [bend left=\bend] ({(\i-2)*360/\n+360/(4*\n)+\rot}:\innerRad);
            }
            \draw[shorten >=\offset,shorten <=\offset, <-] ({(\i-2)*360/\n+360/(\n*2)+360/(4*\n)+\rot}:\outerRad) to [bend right=\bend] ({(\i-1)*360/\n+360/(4*\n)+\rot}:\innerRad);
        }
   }
   \node at ({(\missing-2)*360/\n+(360/(\n*2))+360/(4*\n) +\rot}:{(\innerRad)}) {$\cdots$};
\end{tikzpicture}
\end{center}

It is straightforward to check that $\beta\gamma\cong \alpha^{n-1}$ using \ref{ex:rel:alphabeta}. 
It follows from the form of $I_1$ that $\alpha\beta = 0$.  Lastly there is the relation $\gamma\alpha = 0$, which is due to $\gamma\alpha$ being null-homotopic. 
It follows from $\alpha^{n-1} \neq 0$ and $\beta\gamma \neq 0$, that there are no more relations. This now means that

\begin{equation*}
    \End_{\pD(A_1^{\op})}(P_1) \cong Q / \langle \alpha^{n-1}-\beta\gamma, \alpha\beta,\gamma\alpha \rangle \cong A_2.
\end{equation*}

\subsection{Example from symmetric \texorpdfstring{$(k,n)$}{(k,n)}-Postnikov diagrams}\leavevmode
\medbreak

It turns out that a good source of examples for finding derived equivalent algebras using \Cref{cor:Ptilting}
are Postnikov diagrams as shown in \Cref{fig:post39_1,fig:post39_2}.
A $(k,n)$-Postnikov diagram $D$ is a Postnikov diagram with $n$ vertices, and strands going from vertices $i$ to vertices $i+k$.
Such a diagram is called \emph{symmetric} if it is invariant under rotation by $k$ vertices, see
\Cref{fig:post39_1,fig:post39_2}.
Furthermore, $D$ is called \emph{reduced} if no ``untwisting'' moves can be applied. 
For a detailed description of these properties see \cite[sec. 4]{pasquali2020self}.
\medbreak

To each Postnikov diagram $D$ one can associate an ice quiver with potential
$(Q,W,F)$, where 
\begin{itemize}
    \item $Q$ is the quiver associated to $D$. The vertices of $Q$ corresponds to the alternating regions of $D$.
        There is an arrow between two vertices if their corresponding regions meet at an intersection of strands,
        the arrow will point with the `flow' of those intersecting stands.
        See \Cref{fig:post39_1,fig:post39_2}.
    \item $W$ is the potential given by the sum of clockwise cycles in $Q$ minus anti-clockwise cycles in $Q$, and
    \item $F$ is the frozen vertices, which is the set of vertices on the boundary of $D$.
\end{itemize}
For further details see \cite[sec. 4]{pasquali2020self}.
Denote the associated frozen Jacobian algebra $\pazocal{P}(Q,W,F)$.
\medbreak

Next we construct the boundary algebra as described in \cite[sec. 6]{pasquali2020self}. Given $k,n\in \NN$, with $k<n$, consider a $\ZZ/n\ZZ$-grading on $\CC[x,y]$ given by
$\deg x = 1$, and $\deg y = -1$.
Now let $R = \CC[x,y]/(x^{k} - y^{n-k})$ and define the boundary algebra

\begin{equation*}
    B = \End_R^{\ZZ/n\ZZ}\left( \bigoplus_{i\in \ZZ/n\ZZ} R(i) \right),
\end{equation*}

where $(i)$ indicates the shift in degrees by $i$ in $\mod^{\ZZ/n\ZZ}(R)$.
Let $\hat B$ be the completion of $B$ with respect to the ideal $(x,y)$.

\indent The following two theorems are a collection of results due to Geiß-Jensen-King-Leclerc-Pasquali-Schr{\"o}er-Su.
The first theorem shows \Cref{set:EC} is satisfied.
The category $\CM$ of Cohen-Macaulay modules and its stabilisation $\uCM$ were introduced in \cite[secs. 3, 4]{jensen2016cat}.

\begin{theorem} Let $k,n\in\NN$ with $k<n$. We then have the following results
    \begin{enumerate}
        \item $\CM(\hat B)$ is a Frobenius category (\cite[cor.3.7]{jensen2016cat})
        \item $\uCM(\hat B)$ is 2-Calabi--Yau (\cite[cor. 4.6]{jensen2016cat} and \cite[prop. 3.4]{geiss2008partial})
        \item $\uCM(\hat B)$ is Hom-finite (\cite[cor. 4.6]{jensen2016cat} and \cite[sec. 3.1, 3.2]{geiss2008partial}).
        \item $\uCM(\hat B)$ has split idempotents.
    \end{enumerate}
\end{theorem}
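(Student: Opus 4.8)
The plan is to read off (1)--(3) directly from the quoted literature and to spend the real work on (4); together the four statements say that the pair $\CM(\hat B)$, $\uCM(\hat B)$ meets the standing hypotheses on the stable category in \Cref{set:EC}, which is what is needed to invoke \Cref{cor:Ptilting}. For (1)--(3) I would simply cite: the Frobenius exact structure on $\CM(\hat B)$ is \cite[cor.~3.7]{jensen2016cat}; the $2$-Calabi--Yau property and the Hom-finiteness of the stabilisation are obtained by combining \cite[cor.~4.6]{jensen2016cat} with \cite[prop.~3.4]{geiss2008partial} and with \cite[secs.~3.1,~3.2]{geiss2008partial} respectively. No independent argument is needed there, so I will not reprove them.

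For (4) the approach is to first establish that $\CM(\hat B)$ is a Krull--Schmidt category and then push this down to the stable quotient. First I would note that $\hat B$ is a module-finite algebra over the complete local Noetherian ring $\hat R$, the $(x,y)$-adic completion of $R=\CC[x,y]/(x^{k}-y^{n-k})$; hence $\hat B$ is semiperfect, $\mod\hat B$ is Krull--Schmidt, and since $\CM(\hat B)$ is the full subcategory of maximal Cohen--Macaulay modules, which is closed under direct summands, it is Krull--Schmidt as well. In particular idempotents split in $\CM(\hat B)$ and every module is a finite direct sum of indecomposables with local endomorphism rings. Next I would transfer this to the stable category: given $X\in\uCM(\hat B)$, choose such a decomposition $X\cong\bigoplus_i X_i$ in $\CM(\hat B)$; then $\uEnd(X_i)$ is a quotient of the local ring $\End_{\CM(\hat B)}(X_i)$, hence is either the zero ring -- precisely when $X_i$ is projective-injective, i.e. $X_i\cong 0$ in $\uCM(\hat B)$ -- or again local. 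Discarding the zero summands exhibits $X$ as a finite biproduct of objects with local endomorphism rings in $\uCM(\hat B)$, and by the standard characterisation of Krull--Schmidt categories this forces $\uCM(\hat B)$ to be Krull--Schmidt; in particular idempotents split in it.

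The step I expect to need the most care is the last one, checking that the stable endomorphism rings of the non-projective summands are genuinely local and not accidentally trivial: this comes down to the observation that $\id_{X_i}$ factors through a projective-injective object exactly when $X_i$ is itself projective-injective, so that for a non-projective indecomposable $X_i$ the ideal of morphisms factoring through projectives is a proper two-sided ideal of the local ring $\End_{\CM(\hat B)}(X_i)$, whose quotient is then again local. An alternative to the Krull--Schmidt route would be to lift an idempotent of $\uEnd(X)$ along $\End_{\CM(\hat B)}(X)\twoheadrightarrow\uEnd(X)$ and split it using idempotent-completeness of $\CM(\hat B)$, but arranging the lift requires knowing that this surjection has idempotent-lifting kernel, which is cleanest to see via the local-summand argument above in any case.
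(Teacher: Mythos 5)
Your treatment of (1)--(3) coincides with the paper's: both simply read these off from the cited literature, so there is nothing to compare there. For (4) you take a genuinely different, and correct, route. The paper transports the problem across the triangle equivalence $\uSub Q_k \simeq \uCM(\hat B)$ of Jensen--King--Su: it observes that $\Sub Q_k$ is a summand-closed full subcategory of a module category with split idempotents, hence itself has split idempotents, and then asserts that this passes to the stable quotient $\uSub Q_k$. You instead argue intrinsically on the $\hat B$ side: module-finiteness of $\hat B$ over a complete local Noetherian base makes $\hat B$ semiperfect, so $\mod\hat B$ and its summand-closed subcategory $\CM(\hat B)$ are Krull--Schmidt, and the stable category inherits the Krull--Schmidt property (hence split idempotents) because the stable endomorphism ring of an indecomposable is a quotient of a local ring by a proper ideal -- hence local -- unless the object is projective-injective, the dichotomy resting on the fact that $\id$ factors through a projective only for projective objects, cf.\ \Cref{lem:wif}\ref{lem:item:wif5}. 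Your version is self-contained, avoiding any appeal to the equivalence with $\uSub Q_k$, and it also supplies the justification, left implicit in the paper, for why split idempotents descend from the Frobenius category to its additive quotient: that descent is not automatic for general quotients by an ideal and is most cleanly seen exactly via the local-or-zero analysis you spell out. What the paper's route buys in exchange is brevity, since the equivalence $\uSub Q_k \simeq \uCM(\hat B)$ is already being invoked for parts (2) and (3).
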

\begin{proof}
    That $\uCM(\hat B)$ has split idempotents comes from the fact that $\Sub Q_k$ from \cite[cor. 4.6]{jensen2016cat} and \cite[sec. 3]{geiss2008partial} has split idempotents. This is due to it being the full subcategory of submodules of sums of $Q_k$, in a module category which has split idempotents.
    Since $\Sub Q_k$ has split idempotents, $\uSub Q_k$ has split idempotents, and by \cite[cor. 4.6]{jensen2016cat} there is a triangle equivalence $\uSub Q_k\cong\uCM(\hat B)$.
\end{proof}
The next collection of result describes the objects we want to work with.
\begin{theorem} We have the following results
    \label{thm:pasquali}
    \begin{enumerate}
        \item For every reduced $(k,n)$-Postnikov diagram $D$, there is an associated cluster-tilting object $T(D)$ of $\CM(\hat B)$ (\cite[thm. 7.2]{pasquali2020self}).
        \item \label{p:sinj} Given a reduced $(k,n)$-Postnikov diagram $D$, then $D$ is symmetric if and only if the endomorphism ring $\End(T(D))$ is self-injective (\cite[thm. 8.2, lem 7.7]{pasquali2020self}). 
        \item \label{p:jac} If $D$ is a $(k,n)$-Postnikov diagram, $T$ the associated cluster tilting object, (Q,W,F) the associated ice quiver with potential, then $\underline{\End}_{\hat B}(T)\cong \pazocal{P}(Q,W,F)/\langle F\rangle$, where $\langle F\rangle$ is the ideal generated by the frozen vertices (\cite[lem. 7.5, prop 7.6, sec. 3]{pasquali2020self}).
    \end{enumerate}
\end{theorem}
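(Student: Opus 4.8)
The plan is to read each part off the work of Pasquali \cite{pasquali2020self} directly, the only task being to check that his conventions match those fixed above and in section~4.

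Part~(1) is \cite[thm. 7.2]{pasquali2020self}: to every reduced $(k,n)$-Postnikov diagram $D$ Pasquali attaches a cluster-tilting object of $\CM(\hat B)$, and since the completed boundary algebra $\hat B$ has been defined above exactly as in his paper, there is nothing to adjust. Part~(2) is likewise the content of \cite[thm. 8.2]{pasquali2020self} together with \cite[lem. 7.7]{pasquali2020self}, which between them give both implications of ``$D$ symmetric $\iff$ $\End(T(D))$ self-injective''.

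For part~(3) I would argue in two steps. First, \cite[lem. 7.5]{pasquali2020self} and \cite[prop. 7.6]{pasquali2020self} identify the unstable endomorphism algebra $\End_{\CM(\hat B)}(T)$ with the frozen Jacobian algebra $\pazocal{P}(Q,W,F)$, under an isomorphism sending the indecomposable summands of $T$ bijectively to the vertices of $Q$. Second, the stable endomorphism algebra $\underline{\End}_{\hat B}(T)$ is the quotient of $\End_{\CM(\hat B)}(T)$ by the ideal of endomorphisms factoring through a projective-injective object; writing $T=T'\oplus r$ with $r$ the sum of the projective-injective summands, this ideal is $\End(T)\,e_r\,\End(T)$, where $e_r$ is the idempotent cutting out $r$. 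Since the projective-injective summands of $T$ are exactly those indexed by the boundary regions of $D$, i.e.\ by the frozen vertices $F$, one has $e_r=\sum_{i\in F}e_i$, so this ideal is precisely the ideal $\langle F\rangle$ of \cite[sec. 3]{pasquali2020self}; combining the two steps yields $\underline{\End}_{\hat B}(T)\cong\pazocal{P}(Q,W,F)/\langle F\rangle$. The only real obstacle is bookkeeping in this last part: one must be sure that the bijection of \cite[prop. 7.6]{pasquali2020self} sends the projective-injective summands of $T$ to the frozen vertices rather than to their complement, and that ``the ideal generated by the frozen vertices'' is the two-sided ideal generated by the idempotents $e_i$, $i\in F$ (not the one generated by the arrows incident to $F$), so that it really computes the passage to $\uCM(\hat B)$. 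Once those identifications are pinned down, part~(3) is formal and parts~(1)--(2) are direct citations.
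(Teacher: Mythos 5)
Your proposal matches the paper exactly: the paper gives no proof of this theorem, presenting it purely as a collection of results cited from Pasquali (parts (1)--(3) from \cite[thm.~7.2]{pasquali2020self}, \cite[thm.~8.2, lem.~7.7]{pasquali2020self}, and \cite[lem.~7.5, prop.~7.6, sec.~3]{pasquali2020self} respectively). Your extra bookkeeping for part~(3) --- identifying the ideal of morphisms factoring through projective-injectives with the ideal generated by the frozen idempotents --- is a correct and welcome elaboration of what the citation leaves implicit, but it is not a departure from the paper's approach.
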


\begin{corollary}
    \label{cor:postnikov_example}
    Let $k,n\in \NN$, with $k<n$. 
    Let $D,D'$ be two symmetric and reduced $(k,n)$-Postnikov diagrams, with
    associated cluster tilting objects $T = T(D)$ and $T' = T(D')$ in $\uCM(\hat B)$. 
    Then $\uEnd(T)$ and $\uEnd(T')$ are derived equivalent.
\end{corollary}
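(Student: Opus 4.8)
The plan is to verify that the Frobenius category $\cE=\CM(\hat B)$, its stable category $\cC=\uCM(\hat B)$, and the objects $T=T(D)$ and $T'=T(D')$ satisfy all the hypotheses of \Cref{thm:intro:main} (equivalently \Cref{cor:Ptilting}), and then invoke that result directly.

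First I would dispatch the categorical hypotheses. Taking $r=\hat B$, the finitely generated free modules $\add(\hat B)$ are precisely the projective-injective objects of $\CM(\hat B)$, so the class of projective objects is $\add(r)$; combined with the theorem recalled above, which asserts that $\CM(\hat B)$ is Frobenius and that $\uCM(\hat B)$ is $2$-Calabi--Yau, Hom-finite and has split idempotents, this shows that \Cref{set:EC} is in force for $(\cE,\cC)$. Next, by \Cref{thm:pasquali}(1) the reduced diagrams $D$ and $D'$ give cluster-tilting objects $T,T'\in\CM(\hat B)$; a cluster-tilting object in a $2$-Calabi--Yau category is in particular maximal rigid, and a cluster-tilting object of the Frobenius category $\CM(\hat B)$ contains the projective generator $r$ among its summands, so $T$ — and likewise $T'$ — is good (or, if one prefers, one passes to a good representative, which by the remark after \Cref{set:EC} changes neither the class in $\cC$ nor $\uEnd(T)$). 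Finally, because $D$ and $D'$ are symmetric, \Cref{thm:pasquali}(2), read through the identification of \Cref{thm:pasquali}(3), gives that $\uA=\uEnd_{\hat B}(T)$ and $\uB=\uEnd_{\hat B}(T')$ are self-injective; by \Cref{lem:hom_sym} this is the same as $\Sigma^2 T\cong T$ and $\Sigma^2 T'\cong T'$ in $\cC$, so the standing assumptions of Section~3 all hold.

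At that point \Cref{cor:Ptilting} applies and yields a two-sided tilting complex ${}_{\uB}\cT_{\uA}$, hence a triangulated equivalence $\pD(\uB^{\op})\simto\pD(\uA^{\op})$, which is exactly the statement that $\uEnd(T)$ and $\uEnd(T')$ are derived equivalent. The only point requiring genuine care is the self-injectivity input: one must make sure that the algebra proved self-injective for a symmetric diagram is the \emph{stable} endomorphism algebra $\uEnd_{\hat B}(T(D))\cong\pazocal{P}(Q,W,F)/\langle F\rangle$ and not the frozen endomorphism algebra $\End_{\hat B}(T(D))=\pazocal{P}(Q,W,F)$, which has the boundary algebra as a quotient and is not self-injective. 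Once \Cref{thm:pasquali}(2)--(3) is read with that identification in mind, there is nothing further to prove.
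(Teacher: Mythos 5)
Your proof is correct and follows the same route as the paper: the paper's own proof simply cites \Cref{thm:pasquali}(2) for self-injectivity of $\uEnd(T)$ and $\uEnd(T')$ and then applies \Cref{cor:Ptilting}. You spell out more of the hypothesis-checking (maximal rigidity of cluster-tilting objects, goodness, and the passage from self-injectivity to $\Sigma^2 T\cong T$ via \Cref{lem:hom_sym}), which is a welcome level of care but not a different argument.
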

\begin{proof}
    Since $D$ and $D'$ are symmetric Postnikov diagrams \Cref{thm:pasquali}(\ref{p:sinj}) gives that $\uEnd{T}$ and $\uEnd{T'}$ are self-injective. Therefore \Cref{cor:Ptilting} gives that $\uEnd(T)$ and $\uEnd(T')$ are derived equivalent.
\end{proof}

As an example of the use of \Cref{cor:postnikov_example}, see \Cref{fig:post39_1,fig:post39_2} for two symmetric and reduced $(3,9)$-Postnikov diagrams and their associated ice quivers. 
The frozen vertices are exactly the ones on the boundary.
Denote these ice quivers with potential by $(Q,W,F)$ and $(Q',W',F')$ respectively.
Using Theorems \ref{thm:pasquali}(\ref{p:sinj}) and \ref{thm:pasquali}(\ref{p:jac}) together with \Cref{cor:postnikov_example} we get that
the associated algebras $\pazocal{P}(Q,W,F)/\langle F\rangle$ and $\pazocal{P}(Q',W',F')/\langle F'\rangle$ are self-injective and derived equivalent.
Note that in contrast to \cite{august2020tilting} the 2-CY algebras considered here are not symmetric.

\begin{figure}[H]
    \includegraphics[scale=0.6, draft=false]{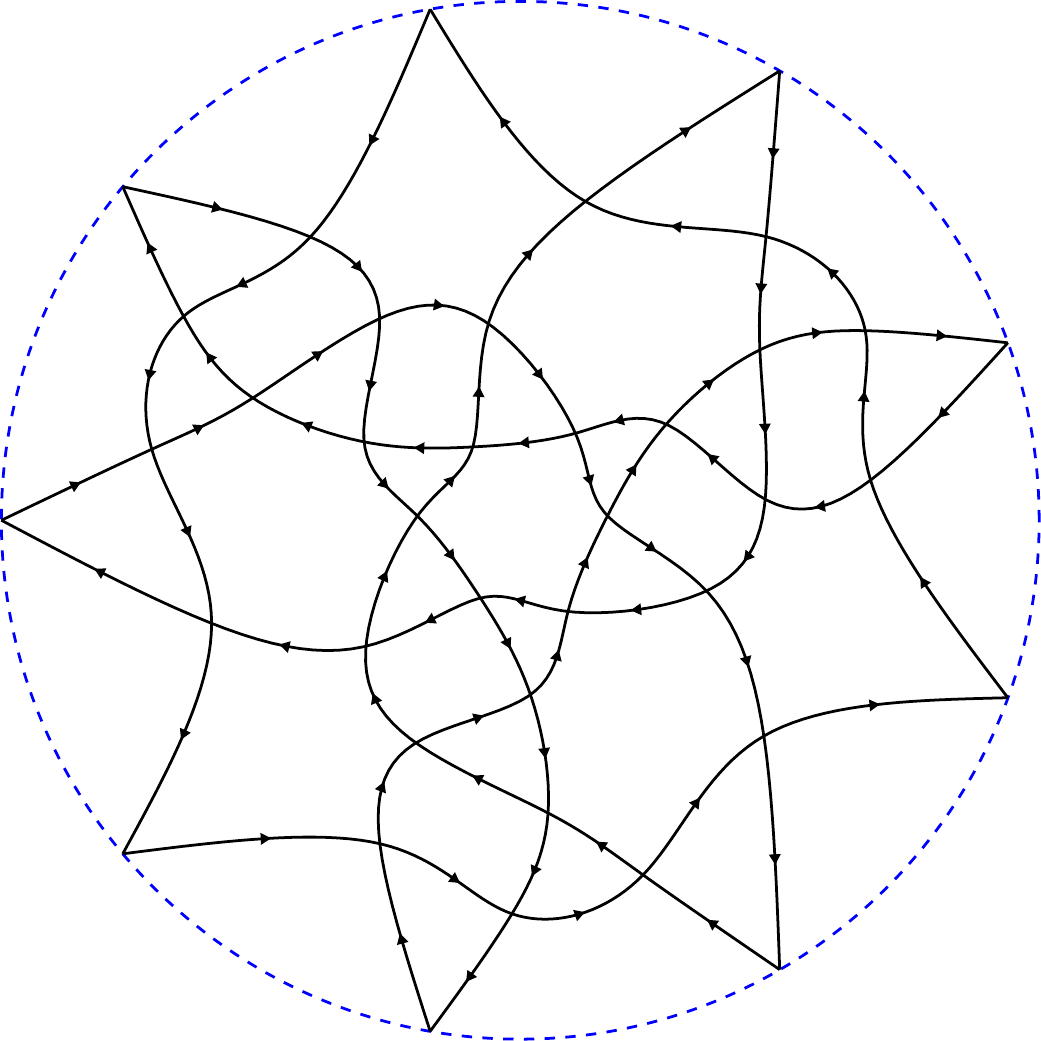}
    \hspace{10pt}
    \includegraphics[scale=0.6, draft=false]{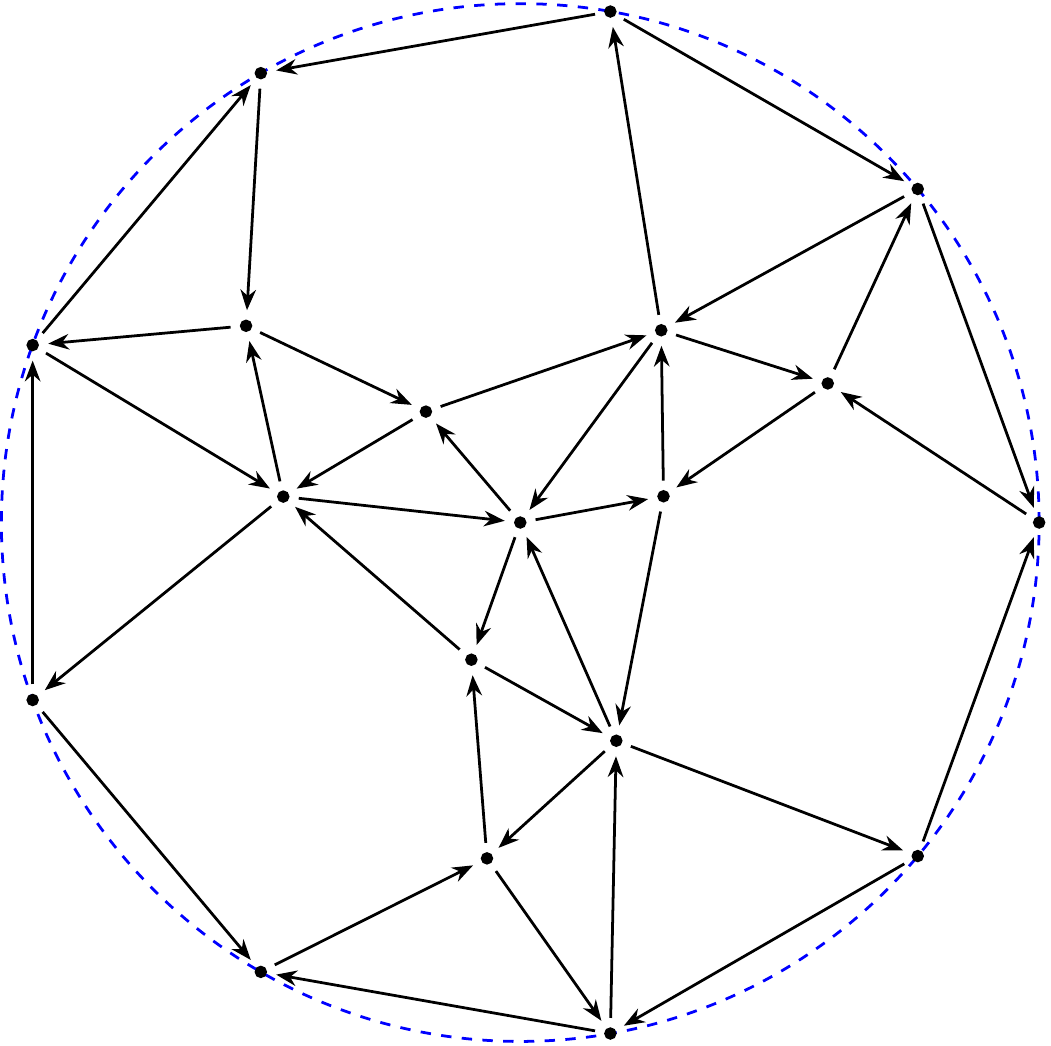}
    \caption{To the left is a (3, 9)-Postnikov diagram, and to the right is its associated quiver, with the vertices on the outer boundary being the frozen vertices.}
    \label{fig:post39_1}
\end{figure}

\begin{figure}[H]
    \includegraphics[scale=0.6, draft=false]{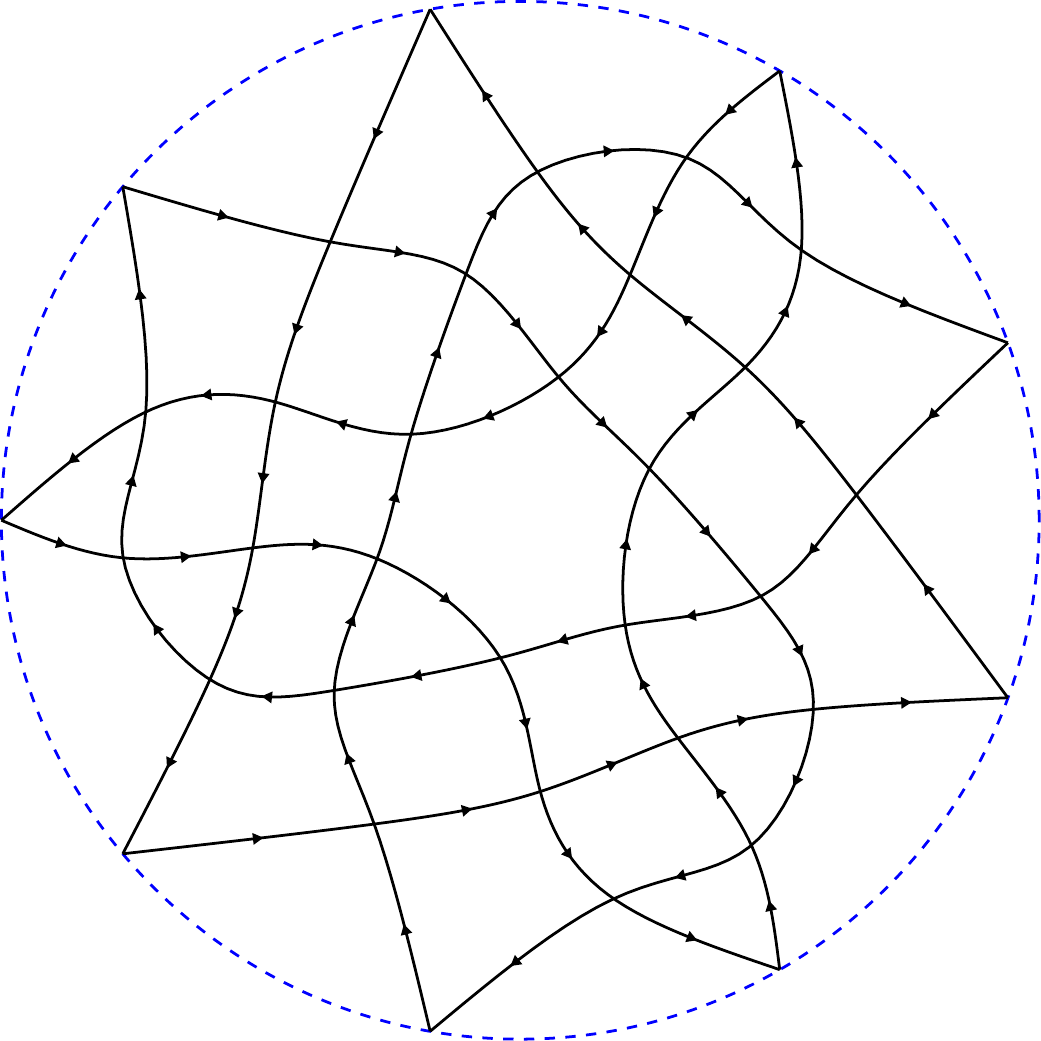}
    \hspace{10pt}
    \includegraphics[scale=0.6, draft=false]{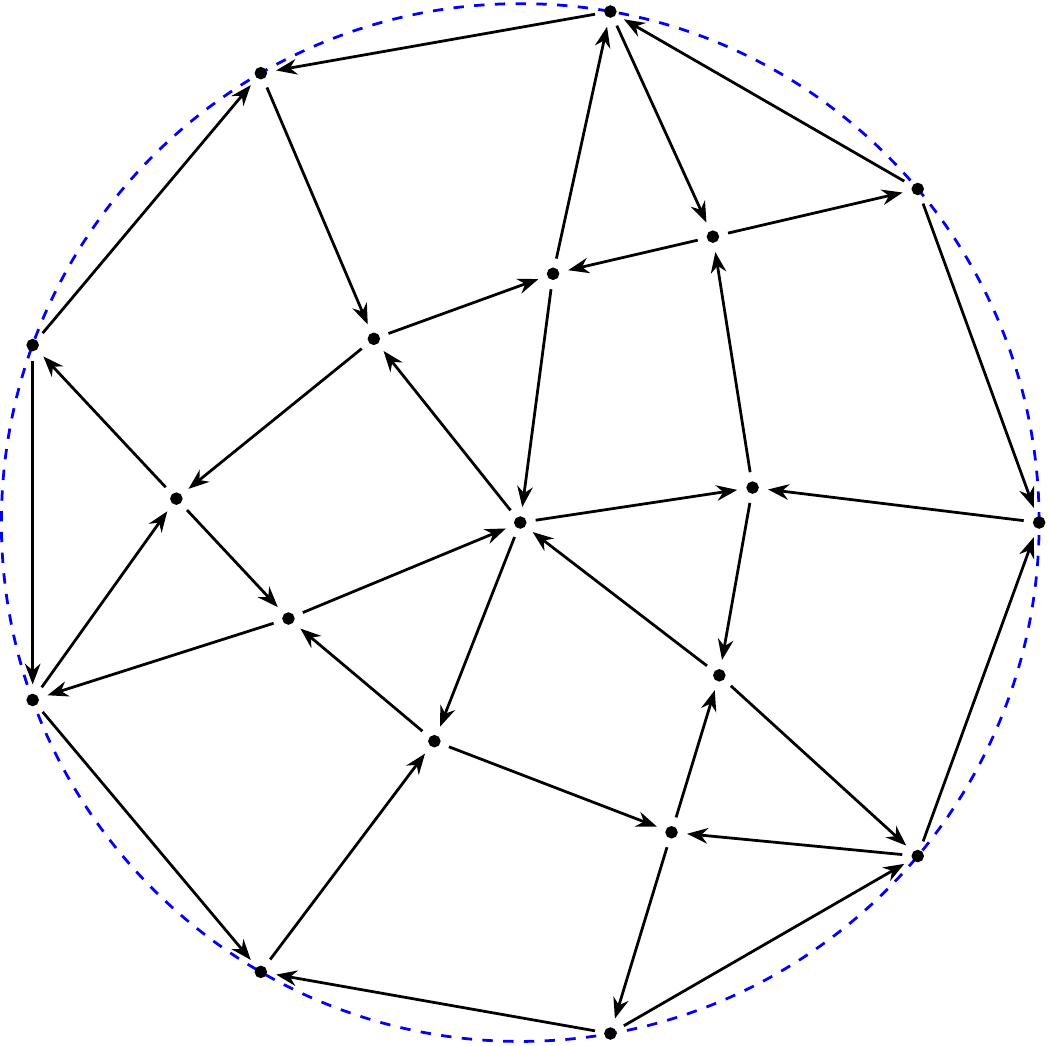}
    \caption{To the left is a (3, 9)-Postnikov diagram, and to the right is its associated quiver, with the vertices on the outer boundary being the frozen vertices.}
    \label{fig:post39_2}
\end{figure}

\textit{Acknowledgement.}
    Thanks to my supervisor Peter Jørgensen for all his insightful advice and corrections.
    Also thanks to Jenny August for her helpful comments on the manuscript.

    \indent This project was supported by grant no. DNRF156 from the Danish National Research Foundation.

\bibliographystyle{amsplain}

\end{document}